\def\blue{\textcolor{\blue}}
\renewcommand{\wedge}{\times}
\renewcommand{\leq}{\leqslant}
\renewcommand{\geq}{\geqslant}
\providecommand{\eps}{\varepsilon}
\newcommand{\R}{ { \mathbb{R} } }
\newcommand{\T}{ { \mathbb{T}  } }
\newcommand{\Z}{ { \mathbb{Z} } }
\def\curl{\operatorname{curl}}
\def\Div{\operatorname{div}}
\newcommand{\dt}{ \partial_t }
\newcommand{\tendlorsque}[2]{\mathop{\longrightarrow}\limits_{#1\rightarrow#2}}
\newtheorem{Theorem}{Theorem}
\newtheorem{Definition}[Theorem]{Definition}
\newtheorem{Lemma}[Theorem]{Lemma}
\newtheorem{Remark}{Remark}
\begin{document}

\title{On the weak solutions to the Maxwell-Landau-Lifshitz equations 
 and to the Hall-Magneto-Hydrodynamic equations}

\author{Eric Dumas\footnote{Universit\'e Grenoble 1 - 
Institut Fourier - 100, rue des math\'ematiques - 
BP 74 - 38402 Saint Martin d'H\`eres FRANCE, Support by The Nano-Science Foundation, Grenoble, Project HM-MAG, is acknowledged.}, 
Franck Sueur\footnote{CNRS, UMR 7598, Laboratoire Jacques-Louis Lions, F-75005, Paris, France}
\footnote{UPMC Univ Paris 06, UMR 7598, Laboratoire Jacques-Louis Lions, F-75005, Paris, France}
}
\maketitle

\begin{abstract}
In this paper we deal with weak solutions to the  Maxwell-Landau-Lifshitz equations 
and to the Hall-Magneto-Hydrodynamic equations. 
First we prove that these solutions  satisfy some weak-strong uniqueness property. 
Then we investigate the validity of energy identities.
In particular we give a sufficient condition on the regularity 
of weak solutions to rule out anomalous dissipation. 
In the case of the Hall-Magneto-Hydrodynamic equations we also give 
a sufficient condition to guarantee the  magneto-helicity identity.
Our conditions correspond to the same heuristic scaling as the one introduced by Onsager in hydrodynamic theory.
Finally we examine the sign, locally, of the  anomalous dissipations of weak solutions obtained by some natural approximation processes. 

\end{abstract}

\emph{Keywords: }  Maxwell-Landau-Lifshitz equation, 
Hall-Magneto-Hydrodynamic equation, weak-strong uniqueness,  dissipation, 
suitable weak solutions.

\emph{MSC: } 35B99, 35Q60, 35Q35.

\tableofcontents
\newpage
%%%%%%%%%%%%%%%%%%%%%%%%%%%%%%%%%%%%%%%%%%%%%%%%%%
%%%%%%%%%%%%%%%%%%%%%%%%%%%%%%%%%%%%%%%%%%%%%%%%%%

\section{Introduction}

In this paper we consider two non stationary quasilinear systems of PDEs 
originating from two different physical contexts, for which we develop 
a similar mathematical analysis. These systems are the Maxwell-Landau-Lifshitz  
equations and the  Hall-Magneto-Hydrodynamic equations. 
When studying the associated Cauchy problem, 
weak solutions can be constructed, and they satisfy energy 
inequalities, 
which are equalities if the solutions are smooth.

First we prove that these solutions  satisfy some weak-strong uniqueness property. 
More precisely we state that strong solutions are unique among the class of weak solutions.
A key point here is precisely that the weak solutions considered satisfy some energy 
inequalities.

Then we investigate, according to the regularity of solutions, 
if the energy inequalities are equalities, or if some anomalous dissipation 
shows up during evolution. 
In order to prove that, under some regularity assumption on the solution, 
no dissipation occurs, we essentially analyze the commutation 
between regularization operators and nonlinearities. 
Of course  energy identities rely on the particular structure 
of the nonlinearities of the systems under study, and a regularization 
of the equations may destroy this structure so that some cancellations 
arising in the formal operations leading to the  energy identities are 
not true anymore. 
For both equations only quadratic nonlinearities are involved, when the equations are 
written in conservative form.
In the case of the Hall-Magneto-Hydrodynamic equations we also consider some helicity identities.

Finally we examine, locally, the sign of anomalous dissipations. 
One motivation here is that one expects physical solutions effectively 
dissipate (and do not create) energy, globally as well as locally. 

\subsection{Presentation of the two systems}

We start with a presentation of the two systems. 
In both cases we consider that the underlying physical space is 
the three dimensional euclidian space $\R^3$.
\begin{itemize}
\item 
{\bf The Maxwell-Landau-Lifshitz equations (MLL for short),} 
which describes the coupling between the electromagnetic field 
and a magnetizable medium, see \cite{Brown63} and \cite{LL69} 
for Physics references:
\begin{eqnarray}
\label{LLMGC1}
\dt m  &=&   m \wedge (\Delta m + H) 
- m \wedge \Big( m \wedge  (\Delta m + H)\Big) ,
\\ \label{LLMGC2}
\dt H + \curl  E &=& - \dt m ,
\\ \label{LLMGC3}
\dt E - \curl H &=& 0 , 
\\ \label{LLMGC4}
\Div  E &=& \Div (H+m) = 0 .
\end{eqnarray}
Here $m(t,x)$ stands for the magnetic moment 
and takes  values in the unit sphere $S^2$ of $\R^3$, whereas 
$H(t,x)$ and $E(t,x)$ are respectively the magnetic and electric fields.

  \item
{\bf The Hall-Magneto-Hydrodynamic equations (HMHD for short)} 
from Plasma Physics, see \cite{Light}:
\begin{eqnarray}
\label{0HMHD1}
\dt u +   u \cdot \nabla u + \nabla p &=& (\curl B) \wedge B + \Delta u
\\ \label{0HMHD2}
\Div u &=& 0 ,
\\ \label{0HMHD3}
\dt B - \curl  (u  \wedge  B ) + \curl  \Big(  ( \curl B )  \wedge B \Big) 
&=& \Delta B ,
\\ \label{0HMHD4}
\Div  B &=& 0 ,
\end{eqnarray}
where $u(t,x)$ and $B(t,x)$ are the fluid velocity and magnetic induction. 

The paper \cite{HMHD} provides a derivation of this system from a two-fluid  isothermal Euler-Maxwell system for electrons and ions. 

The system \eqref{0HMHD1}-\eqref{0HMHD4} is a variant of the following 
{\bf Magneto-Hydrodynamic system} with resistance and dissipation ({\bf MHD for short})
\begin{eqnarray}
\label{0MHD1}
\dt u +    u \cdot \nabla u  + \nabla p &=& (\curl B) \wedge B + \Delta u ,
\\ \label{MHD2}
\Div u &=& 0 ,
\\ \label{MHD3}
\dt B - \curl  (u  \wedge  B )  &=&  \Delta B ,
\\ \label{MHD4}
\Div  B &=& 0 .
\end{eqnarray}
Let us stress that the only difference is that the system 
\eqref{0HMHD1}-\eqref{0HMHD4} contains an extra-term in the left-hand-side 
of \eqref{0HMHD3}, $\curl  \big(  ( \curl B )  \wedge B \big) $ 
which takes the Hall effect into account. 
This effect is believed to be the key for understanding the problem of magnetic reconnection which is involved in geomagnetic storms and solar flares, see for instance \cite{HomanGrauer}. The Hall effect has also been studied in connection with the kinematic dynamo problem  \cite{G}.
\end{itemize}

\subsection{A few formal identities}

In both cases existence of global weak solutions is known.
We will recall precisely these results  below, but let us 
emphasize here and now that their proof uses in a crucial way 
some energy bounds. 
Indeed a few formal computations lead to the following energy 
identities.

\paragraph{Energy identity for the MLL equations:}
\begin{equation}
\label{MLLenergy}
\forall T\geq0, \quad \mathcal{E}_{\rm MLL} (T) 
+  \int_{0}^T   \mathcal{D}_{\rm MLL} (t)  \, dt 
= \mathcal{E}_{\rm MLL} (0) ,
\end{equation}
where
\begin{gather*}
\mathcal{E}_{\rm MLL} (t) := 
\int_{\R^3}  \Big(  | E |^{2} (t,x)  +  | H |^{2} (t,x) 
+  | \nabla m |^{2} (t,x) \Big) dx 
 \ \text{ and } \ 
 \mathcal{D}_{\rm MLL} (t) :=   \int_{ \R^3} | \partial_{t} m |^{2} (t,x)  \, dx   .
\end{gather*}

\paragraph{Energy identity for the MHD and HMHD equations:}
\begin{equation}
\label{HMHDenergy}
\forall T\geq0, \quad 
\mathcal{E}_{\rm MHD} (T)  
+  \int_{0}^T \mathcal{D}_{\rm MHD} (t) \,  dt  
  =  \mathcal{E}_{\rm MHD} (0) 
 \ \text{ and } \ 
\mathcal{E}_{\rm HMHD} (T)  
+  \int_{0}^T \mathcal{D}_{\rm HMHD} (t) \,  dt  
  =  \mathcal{E}_{\rm HMHD} (0) ,
\end{equation}
where we denote 
\begin{equation*}
\mathcal{E}_{\rm MHD} (t) = \mathcal{E}_{\rm HMHD} (t)   
:= \frac12 \int_{\R^3}  \Big(   |  u  |^2   +  |  B  |^2  \Big) (t,x)   \, dx  ,
\end{equation*}
\begin{equation*}
\mathcal{D}_{\rm MHD} (t) = \mathcal{D}_{\rm HMHD} (t) 
:= \int_{\R^3} \Big(  | \curl u  |^2    +  | \curl B  |^2   \Big) (t,x)    \, dx  .
\end{equation*}

These two formal identities can be justified when the solutions involved 
are smooth. 
However the weak solutions alluded here are obtained as weak limits 
of smooth functions so that only an inequality can be justified.

\par \ \par 
Another interesting quantity for the MHD and HMHD equations is the magneto-helicity 
\begin{equation*}
\mathcal{H}_m   (t):= \int_{\R^3} \big( B  \cdot A\big)  (t,x) \, dx ,
\end{equation*}
where $A$ is a vector potential of $B$, 
that is a vector field satisfying $\curl A = B$.  
Indeed since $B$ is divergence free the  magneto-helicity  is independent of the choice of the vector potential. In what follows 
we consider  the gauge choice $\Div A = 0$.

\paragraph{Magneto-helicity identity for the MHD and HMHD equations:} 
\begin{equation}
\label{heliFormal}
\forall T\geq0, \quad \mathcal{H}_m (T) 
+  \int_0^{T}  \mathcal{D}_{m} (t)  \,  dt 
= \mathcal{H}_m (0)  ,
\end{equation}
where
\begin{gather*}
 \mathcal{D}_{m} (t) :=  2 \int_{ \R^3}\big( B \cdot (\curl B) \big)  (t,x)  \, dx   .
\end{gather*}

Once again this formal identity can be justified for smooth solutions, 
but not for weak solutions, a priori.

\par \ \par 

One may also consider the fluid helicity
\begin{equation*}
\mathcal{H}_{f}  (t) := \int_{\R^3} \big( u \cdot \omega\big) (t,x) \, dx ,
\end{equation*}
where  $\omega := \curl u $ denotes the vorticity of the fluid. 
One then has formally the following identity.

\paragraph{Fluid helicity  identity  for the MHD and HMHD equations:}
\begin{equation}
\label{fluideHeli}
\forall T\geq0, \quad  \mathcal{H}_{f} (T) 
+   \int_{0}^T \mathcal{D}_{f} (t)    \, dt 
=  \mathcal{H}_{f}  (0) ,
\end{equation}
where
\begin{gather*}
 \mathcal{D}_{f} (t) :=  2 \int_{ \R^3} \big(  \omega  \cdot  \big( (B   \wedge \curl  B  )  +  \curl \omega     \big) (t,x)
  \, dx   .
\end{gather*}

\par \ \par

In the case of the MHD system
 it is interesting to consider the crossed fluid-magneto-helicity:
\begin{equation*}
\mathcal{H}_{fm}  (t) := \int_{\R^3} \big( B \cdot u \big) (t,x) \, dx =  \int_{\R^3}  \big( A \cdot \omega\big) (t,x) \, dx,
\end{equation*}
which satisfies  formally the  following identity. 

\paragraph{Crossed fluid-magneto-helicity identity  for the MHD equations:}
\begin{equation}
\label{fmHeli}
\forall T\geq0, \quad  \mathcal{H}_{fm} (T) 
+   \int_0^T    \mathcal{D}_{fm} (t)    \, dt 
=  \mathcal{H}_{fm}  (0) ,
\end{equation}
where
\begin{gather*}
 \mathcal{D}_{fm} (t) :=  2 \int_{ \R^3} \big(  \omega  \cdot  \big( (B   \wedge \curl  B  )  +  \curl \omega     \big) (t,x)
  \, dx   .
\end{gather*}

\par \ \par

Finally, in the case of the HMHD equations, a rather simple identity is obtained if one considers the total  fluid-magneto-helicity:
\begin{equation*}
\mathcal{H}_{f+m}  (t) :=  \int_{\R^3} \big( u(t,x) + A(t,x)\big) \cdot \big(\omega(t,x)+ B(t,x)\big) \, dx .
\end{equation*}
\paragraph{Total  fluid-magneto-helicity identity  for the HMHD equations:}
\begin{equation}
\label{miHeli}
\forall T\geq0, \quad  
\mathcal{H}_{f+m} (T) 
+   \int_0^T   \mathcal{D}_{f+m} (t)  \, dt 
=  \mathcal{H}_{f+m}  (0) ,
\end{equation}
where
\begin{gather*}
 \mathcal{D}_{f+m} (t) :=
 2  \int_{ \R^3}  \big(  (\omega + B)  \cdot  \curl (\omega + B )\big)(t,x)  \, dx  .
\end{gather*}
%

%%%%%%%%%%%%%%%%%

\subsection{A common structure}

Let us emphasize that the identities 
\eqref{MLLenergy}-\eqref{HMHDenergy}-\eqref{heliFormal}-\eqref{fluideHeli}-\eqref{fmHeli}-\eqref{miHeli}
have the same form: 
\begin{equation}
\label{GenGlobal}
 (\mathcal{E} \text{ or }  \mathcal{H}) (T) 
+   \int_0^T    \mathcal{D} (t)    \, dt 
=  (\mathcal{E} \text{ or }  \mathcal{H})   (0) ,
\end{equation}
where the terms $\mathcal{E}$ or   $\mathcal{H}$ denote respectively various energies 
and  helicities, and the term $ \mathcal{D}$ can be interpreted as some ``dissipation", even if we do not claim anything about its sign in general at this point of the paper.

Actually, these global identities are obtained by space-time integration from local identities of the form:
\begin{equation}
\label{GenLocal}
\partial_t (e  \text{ or } h) + d + \Div f = 0 ,
\end{equation}
where  the terms $e$, $h$ and $d$ denote respectively various energy, helicity and  dissipation densities, and $f$ denotes some flux density. 
The global quantities  $\mathcal{E}$,  $\mathcal{H}$ and $ \mathcal{D}$ are then obtained from  $e$, $h$ and $d$  by integration in space, that is:
\begin{equation}
\label{Int}
\mathcal{E} (t)  :=   \int_{ \R^3}    e   (t,x)    \,  dx      , \quad 
\mathcal{H} (t)   :=    \int_{ \R^3}   h   (t,x)  \,   dx     \,  \text{ and }\, 
\mathcal{D} (t)   :=      \int_{ \R^3}    d (t,x) \,   dx      .
\end{equation}
The appropriate densities will be given explicitly in each case in Section \ref{DEnsi}.

%%%%%%%%%%%%%%%%%

\subsection{Physical motivations}

The investigation  of  the validity of energy or helicity identities for weak solutions to  MLL and MHD equations is quite natural mathematically but is also linked to a few physical motivations that we want to address now. 

\paragraph{The LLM equations.}

Physically, singularities of the magnetization field 
are referred to as Bloch points (see \cite{TGDMS03}). 
The mathematical analysis of these singularities 
has not been performed yet, but they may be 
of the same type as the ones of the heat flow. 
For the heat flow of maps from a manifold to the sphere $S^2$, 
at least when the space variable $x$ belongs to some 
2-dimensional manifold, (space-time) singularities 
of weak solutions correspond to the ``bubbling'' phenomenon, 
\emph{i.e.} the asymptotic convergence of the solution, 
up to renormalization, towards some harmonic map 
(see \cite{HW08} for a review on this topic): 
this ``bubble'' can be interpreted as the precise loss 
of energy of the solution at the singularity. 
The Landau-Lifshitz equation is close to this class of equations, 
in the following sense. 
Considering the simplified case, dropping the gyroscopic term 
$m\wedge\Delta m$ (and with no magnetic field), the equation 
$$
\dt m  =  - m \wedge (m \wedge \Delta m)
$$
may be rewritten, for smooth solutions, 
$$
\dt m  = \Delta m + |\nabla m|^2 m , 
$$
using $|m|^2=1$. This is the heat flow equation for maps with values 
in  the sphere $S^2$, so that the Landau-Lifshitz equation \eqref{LLMGC1} 
(with $H=0$) can be viewed as a perturbation of this heat flow equation 
(on the other hand, the Landau-Lifshitz equation 
$\dt m  =  m \wedge \Delta m$, with no Gilbert dissipation term, is a 
Schr\"odinger map equation, as can be seen thanks to the stereographic 
projection -- see \cite{SSB86}).

\paragraph{The HMHD equations.}

In ideal MHD, instead of considering equation \eqref{0HMHD3} 
or equation \eqref{MHD3}, one considers the equation: 
\begin{equation}
\label{ideal}
\dt B - \curl  (u  \wedge  B ) = 0. 
\end{equation}

If one introduces the flow $\eta$ associated with the divergence free 
fluid velocity field $u$, that is the volume-preserving diffeomorphism $\eta(t)$ obtained by solving the ordinary differential equation:
$\dt \eta = u(t, \eta  ) $ with initial data $ \eta (0,x) = x $, 
equation \eqref{ideal} is (formally) equivalent to 
\begin{gather*}
 B (t,\cdot) = \Big( (D\eta) (t,\cdot) \, B(0,\cdot) \Big) (\eta(t, \cdot  )^{-1}). 
\end{gather*}
%\
This means that any motion of the medium transports the 
magnetic field through a diffeomorphism action preserving 
the relative position of the field lines.
The term  ``frozen-in'' has been coined in this context.
The topological structure of such a  field, including 
its  degree of knottedness,  does not change along time evolution.  
In particular the helicity of a field, which measures the average linking of the field lines, or their
relative winding,  cf. \cite{AK}, is preserved under the
action of a volume-preserving diffeomorphism.
A formal way to visualize this relies on Smirnov' decomposition of divergence free vector fields into elementary solenoids, cf. \cite{Smirnov}.
More precisely if we decompose initially the field $B_0$  into a superposition of  elementary solenoids
\begin{equation*}
\mathcal{S}_0 := 
\int_{\T} (\partial_{s} \tau_0 (s)) \delta(x-\tau_0 (s)) ds ,
\end{equation*}
where $\T$ denotes the one-dimensional torus 
and $\T \ni s \mapsto \tau_0 (s) \in \R^3$ denotes a loop, 
then, when time goes by,  the field $B$ is obtained as a 
superposition of  the elementary solenoids
\begin{equation}
\mathcal{S} := 
\int_{\T} (\partial_{s} \tau(t,s)) \delta(x-\tau(t,s)) ds ,
\end{equation}
where $\T \ni s \mapsto \tau(t,s) \in \R^3$ denotes the loop 
obtained by solving  $\dt  \tau(t,s)= u(t, \tau(t,s)  )$, 
with the initial data  $\tau(0,s)= \tau_0 (s)$. 
Thus one sees that the corresponding loops cannot be unknotted  
without contradicting that the flow is a diffeomorphism.
 
Now if one takes into account the Hall effect, by considering 
the equation 
$$\dt B - \curl  (u  \wedge  B ) + \curl  \Big(  ( \curl B )  \wedge B \Big) = 0,$$
the previous analysis remains true if one substitutes to $\eta$ 
the flow associated with the divergence free vector field 
$u -  \curl B$.

However it appears that for a correct description of 
magnetic reconnection it is necessary to take into account 
the magnetic viscosity, as this is done here by considering  
equation \eqref{0HMHD3}.
This implies that the  field $B$ is not simply transported (as a $2$-form).
In this reconnection process a subtle interplay takes place between  the Hall effect and the magnetic viscosity \cite{HomanGrauer}.

Another motivation for the investigation of the 
topological structure of the magnetic field is that it provides 
obstructions to the full dissipation of the magnetic energy 
in stars or planets. 
In particular it has been shown by Arnold and Khesin \cite{AK} 
that helicity bounds from below  the energy. 
The helicity approach to magnetic energy minoration in terms 
of the topology of magnetic lines has been generalized 
by Freedman and He \cite{FH} by introducing the notion 
of asymptotic crossing number.

\subsection{An analogy with Onsager's conjecture}

The validity of conservation laws for weak solutions  
is a quite general issue in PDEs. 
In particular such a question was raised for incompressible flows 
by Onsager in \cite{lars}.  
The conjecture states that the minimal space regularity needed for a
weak solution to the incompressible  Euler equation to conserve energy 
is $1/3$, that is every weak solution to the Euler equations with 
H\"older continuous velocity of order  $h > 1/3$ does not dissipate energy; 
and conversely, there exists a weak solution to  the incompressible 
Euler equations of smoothness of exactly $1/3$  which does not conserve energy.
\par \ \par 
Concerning the first part of the conjecture, there was a renewal of interest starting with a paper by Eyink \cite{Eyink} who also discussed the connections 
of Onsager's conjecture with phenomenological approaches of fully-developed turbulence.  Soon after this, Constantin, E and Titi gave a simple proof in \cite{CET} that a weak solution $u(t,x)$ of the incompressible Euler equations satisfying the condition
\begin{equation}
\label{cbesov}
\lim_{y \rightarrow 0} \frac{1}{| y | } \int_{0}^{T}  \int_{\R^{3}} | u(t,x) - u(t,x-y) |^{3} \, dx \,  dt = 0 
\end{equation}
verifies the energy equality. 

Let us stress that the condition  \eqref{cbesov} is a Besov type condition rather than a  H\"older one.
Actually the result in  \cite{CET} is stated for a velocity in $L^{3} (0,T ; B^{\alpha}_{3,\infty} (\Omega ))$ with $\alpha > \frac{1}{3}$, but the proof works as well under the slightly weaker condition  \eqref{cbesov}, see also  \cite{CCFS,raoul,romancours}.
\par \ \par 
Regarding the other part of the conjecture, the celebrated works by Scheffer \cite{Scheffer} and Shnirelman \cite{sasha} prove that there
are nontrivial distributional solutions to the Euler equations which are compactly supported
in space and time, and which therefore do not conserve the kinetic energy. 
Recently these results were extended by De Lellis 
 and Szekelyhidi in \cite{lellis0} where they prove that there exist infinitely many compactly supported bounded weak solutions to the incompressible Euler equations.
Consequently  the existence of solutions, better than bounded, but with a regularity slightly weaker than \eqref{cbesov}, which dissipate the kinetic energy,  was  proved in  a series of papers culminating in \cite{Buck}.
 \par \ \par 
On the other hand in \cite{BT}  Bardos and Titi prove that there exist some solutions to the incompressible Euler equations which do not satisfy  \eqref{cbesov}
but  which still preserve the energy. Indeed these solutions are some very explicit shear flows with only $L^2$ regularity. 
 \par \ \par 
The issue of the conservation of helicity for  incompressible flows was 
tackled by  \cite{Chae,CCFS}.
In particular Theorem $4.2$ in \cite{CCFS} proves the validity of helicity conservation for solutions to the incompressible  Euler equation which are in $L^\infty (0,T; H^\frac12 (\R^3 ))$ and satisfy 
\begin{equation}
\label{cbesovheli}
\lim_{y \rightarrow 0} \frac{1}{| y |^2 } 
\int_{0}^{T}  \int_{\R^{3}} | u(t,x) - u(t,x-y) |^{3} \, dx \,  dt = 0 .
\end{equation}
\par \ \par 
We wish to mention that some anisotropic versions of the Onsager conjecture 
were studied by  Caflisch, Klapper and Steele in \cite{CKS} and by Shvydkoy 
in \cite{roman}. 
Furthermore, the papers  \cite{CFS,FT} deal with the first part 
of the Onsager conjecture for the incompressible Navier-Stokes equations 
when the fluid occupies a domain limited by a boundary. 
\par \ \par 
Finally, a related phenomenon is described in the recent papers \cite{DG,DG2} 
by Dascaliuc and  Gruji\'c, who study the energy cascade in the physical space 
for both the Euler and Navier-Stokes equations.

\subsection{Structure of the paper}

In the next section we start with   a reminder 
of the weak theories available for the MLL 
and HMHD equations and we present our results.
Then we state some weak-strong uniqueness results  for these solutions. 
Next we list the local counterparts of the formal identities given in the 
introduction, 
and we establish some local conservation identities 
corresponding to \eqref{MLLenergy}, \eqref{HMHDenergy} 
and \eqref{heliFormal}. These identities include in general, 
i.e. for weak solutions, some anomalous dissipation terms. 
Then we provide a regularity condition, of Besov type, 
which is sufficient for the vanishing of these anomalous 
dissipation terms. 
Finally we investigate the sign of these energy anomalous 
dissipations. 
In Sections \ref{proofWSMLL} and \ref{proofWSHMHD}, we prove 
the weak-strong uniqueness results for the MLL and HMHD equations, 
respectively. 
In Section \ref{Localconservations} we provide the proof 
of the first part of these results (local conservations) for both the MLL 
and HMHD equations. The proof of the other part (vanishing of anomalous dissipations) 
requires a few more technicalities which are given 
in Section \ref{secTech}. Then we provide
in Section \ref{Va1} a regularity condition 
sufficient for the vanishing of anomalous energy 
dissipation for the MLL equation. 
In Section \ref{Va2}, we show the vanishing 
of magneto-helicity and energy anomalous dissipations 
for the HMHD equations under analogous conditions. 
Section \ref{pom} is devoted to the crossed fluid-magneto-helicity  identity for the MHD equations. 
In Section \ref{SectionSuitable} we prove the results stated 
in Section \ref{secresults} about the sign of the energy dissipation 
for weak solutions to  the MLL and HMHD equations 
obtained by standard processes. 
An Appendix is devoted to the proof of a technical Bernstein-type lemma for a time-space Besov space involved in the analysis.

%%%%%%%%%%%%%
%%%%%%%%%%%%%
%%%%%%%%%%%%%
%%%%%%%%%%%%%
%%%%%%%%%%%%%
%%%%%%%%%%%%%
%%%%%%%%%%%%%

\section{Presentation of the results}
\label{secresults}

\subsection{A reminder of the weak theories for the MLL and 
HMHD equations}
\paragraph{Existence of global weak solutions to the MLL equations.} 
Let us first recall that  the  MLL system admits some global weak solutions.
This result relies on the following conservative form of \eqref{LLMGC1}, sometimes referred to as the Gilbert form of the equations:
\begin{equation}
\label{LLGC}
\dt m +  m \wedge \dt m =  
2 \sum_i \partial_i  \Big( m \wedge \partial_i m  \Big) + 2 m \wedge H ,
\end{equation}
where the sum is over $1,2,3$.

For smooth functions, the two equations, \eqref{LLMGC1} and \eqref{LLGC}, are  equivalent, but the  last form is more appropriate  for some $u$ with weak regularity. 
Indeed we have the following result of existence of weak solutions to the MLL equations, see  \cite{Visintin85,CF98,GS} (see also, concerning weak solutions 
for the Landau-Lifshitz equation, the papers \cite{AS,GH}); see also the book 
\cite{GD08}, as well as references therein.

\begin{Theorem}
\label{CF}
Let be given $m_0$ in $L^{\infty} (\R^3 ; \R^3)$ such that 
$|m_0| = 1$ almost everywhere and $\nabla m_0$ is in $L^{2} (\R^3 ; \R^{9})$, 
$E_{0}$ and $H_{0}$ in $L^{2} (\R^3 ; \R^3)$ such that 
$\Div  E_0 = \Div (H_0 + m_{0} )= 0 $.
Then, there exists $(m,E,H) : (0,\infty)\times\R^3 \rightarrow \R^9$ 
such that, for all $T>0$, 
\begin{gather*}
m \in  L^\infty ((0,T) \times \R^3 ;  \R^3) , \quad  |m | = 1 \text{ a.e.}, \quad 
\\   \nabla m \in L^\infty ((0,T); L^{2} (\R^3 ; \R^{9} )) \quad \text{and} \quad  
\dt m \in L^{2} ((0,T) \times \R^3 ;  \R^3)  ,
 \\   (E,H) \in 
L^\infty ((0,T) ; L^{2} (\R^3 ; \R^6))    , 
\end{gather*}
and $(m,E,H)$ is a weak solution to the MLL equations 
\eqref{LLGC}-\eqref{LLMGC2}-\eqref{LLMGC3}-\eqref{LLMGC4} 
on $(0,\infty)\times\R^3$, with initial value $(m_0,E_0,H_0)$.
Moreover, this solution satisfies the following energy inequality, 
\begin{equation}
\label{EIM}
\text{for almost every } T>0, \quad
\mathcal{E}_{\rm MLL} (T) 
+  \int_{0}^T   \mathcal{D}_{\rm MLL} (t)   \, dt 
\leqslant  \mathcal{E}_{\rm MLL} (0) .
\end{equation}
\end{Theorem}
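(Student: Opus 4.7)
The plan is to follow the standard strategy for proving existence of weak solutions: construct a family of regularized approximations, derive uniform a priori bounds from the formal energy identity, and pass to the limit by compactness. The crucial feature that enables the argument is the conservative Gilbert form \eqref{LLGC}, where the nonlinearity $m \wedge \Delta m$ has been rewritten as a divergence $\partial_i(m \wedge \partial_i m)$; this allows the product to be interpreted in the limit as one strongly convergent factor times one weakly convergent factor.

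First I would introduce a regularized problem indexed by $\eps>0$, for instance based on a Ginzburg--Landau penalization of the sphere constraint (add a term $-\eps^{-1}(|m|^2-1)m$ on the right-hand side) together with mollification of the initial data. The regularized system can be solved by a Faedo--Galerkin scheme on a truncated basis of $H^1(\R^3)$ and $L^2(\R^3)$, yielding approximate solutions $(m^\eps, E^\eps, H^\eps)$ in a smooth class by standard ODE arguments; non-blowup at fixed $\eps$, and hence global existence of each approximation, follows from the uniform a priori bounds derived next.

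The basic a priori estimate is obtained by testing the Gilbert equation against $\dt m^\eps$, the equation \eqref{LLMGC2} against $H^\eps$, the equation \eqref{LLMGC3} against $E^\eps$, and summing. Using the orthogonality $m\cdot(m\wedge X)=0$, the skew-symmetry produced by the curl in the Maxwell pair, and standard integrations by parts, the nonlinear coupling terms cancel and one obtains a discrete analogue of \eqref{MLLenergy}. This yields uniform bounds: $m^\eps$ in $L^\infty_t L^\infty_x$, $\nabla m^\eps$ in $L^\infty_t L^2_x$, $\dt m^\eps$ in $L^2_{t,x}$, and $(E^\eps, H^\eps)$ in $L^\infty_t L^2_x$, together with control of the penalty quantity $\eps^{-1}\||m^\eps|^2-1\|_{L^\infty_t L^2_x}^2$.

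By the Aubin--Lions lemma, the bounds on $m^\eps$ in $L^\infty_t H^1_x$ and on $\dt m^\eps$ in $L^2_{t,x}$ yield strong compactness of $m^\eps$ in $L^2_{\mathrm{loc}}$ and hence a.e. convergence along a subsequence; interpolation with the $L^\infty$ bound upgrades this to strong convergence in $L^p_{\mathrm{loc}}$ for every $p<\infty$, and combined with the vanishing of the penalty gives $|m|=1$ a.e. The fields $E^\eps, H^\eps$ converge weakly-$*$ in $L^\infty_t L^2_x$, which suffices for the linear Maxwell pair; in the Gilbert equation, each of the products $m^\eps \wedge \partial_i m^\eps$, $m^\eps \wedge \dt m^\eps$ and $m^\eps \wedge H^\eps$ is of strong times weak type and therefore passes to the limit in $\mathcal{D}'$. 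The energy inequality \eqref{EIM} then follows from weak lower semicontinuity of the norms on the left of the discrete energy identity. The main obstacle is precisely the handling of these nonlinear terms in the limit: without the conservative Gilbert form, $m\wedge\Delta m$ has no natural meaning when $m$ merely lies in $L^\infty_t H^1_x$, and the approximation must be tuned so as not to destroy the orthogonality $m\cdot\dt m=0$ underlying preservation of the sphere constraint.
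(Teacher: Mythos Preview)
Your proposal is correct and follows the same Ginzburg--Landau penalization strategy that the paper cites from \cite{AS,CF98} and sketches in its discussion preceding Theorem~\ref{THsuitableMLL}; the paper itself does not give a self-contained proof of Theorem~\ref{CF} but refers to those works. One small point worth tightening: the uniform $L^\infty_{t,x}$ bound on $m^\eps$ does not come out of the energy identity you describe---that identity only controls $\nabla m^\eps$, $\partial_t m^\eps$, the fields, and the penalty---but rather from a weak maximum principle applied to $|m^\eps|^2$, and for this the precise form of the penalized equation matters (the paper inserts the extra term $-(H^\eps\cdot m^\eps)m^\eps$ in \eqref{LLM-suit1} specifically to make the maximum principle go through).
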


\paragraph{Existence of global weak solutions to the HMHD equations.} 
Let us now tackle the case of the HMHD equations. 
These equations are recast in a conservative form:
\begin{eqnarray}
\label{HMHD1}
\dt u +   \Div (u \otimes u - B\otimes B ) + \nabla p_m &=& \Delta u ,
\\ \label{HMHD2}
\Div u &=& 0 ,
\\ \label{HMHD3}
\dt B - \curl  (u  \wedge  B ) + \curl  \Div (B\otimes B) &=& \Delta B ,
\\ \label{HMHD4}
\Div  B &=& 0 ,
\end{eqnarray}
where $p_m$ denotes the magnetic pressure
\begin{equation*}
p_m := p + \frac{1}{2} | B |^2 .
\end{equation*}
Let
\begin{equation*}
\mathcal{H} := \{ \phi \in L^2 ( \R^3 ; \R^3 ) \mid   \Div \phi = 0   \} 
\quad \text{and} \quad 
\mathcal{V} := \{ \phi \in H^1 ( \R^3;\R^3 ) \mid \Div \phi = 0  \} .
\end{equation*}

In the recent paper \cite{CDL}, Chae, Degond and Liu 
establish the existence of global weak solutions for the
incompressible viscous resistive HMHD model written as follows:

\begin{Theorem}[Chae-Degond-Liu, Acheritogaray-Degond-Frouvelle-Liu] 
\label{ADFL}
Let $u_0$ and $B_0$ be in $\mathcal{H}$. 
Then there exists a global weak solution $(u,B)$ to the HMHD model 
\eqref{HMHD1}-\eqref{HMHD4}, corresponding to these initial data. 
Moreover, for all $T>0$, we have 
\begin{equation}
(u,B)  \in 
\Big( L^\infty (0,T ; \mathcal{H} )\cap L^2 (0,T ; \mathcal{V} ) \Big)^2 ,
\end{equation}
and this solution satisfies the following energy inequality:
\begin{equation}
\label{nrjHALL}
\text{for almost every } T>0, \quad
\mathcal{E}_{\rm HMHD} (T)  
+  \int_0^T  \mathcal{D}_{\rm HMHD} (t)  \,   dt 
\leq  \mathcal{E}_{\rm HMHD} (0) .
\end{equation}
\end{Theorem}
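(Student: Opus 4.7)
The plan is to follow Leray's classical strategy for incompressible systems, adapted to handle the second-order nonlinearity produced by the Hall term $\curl((\curl B)\wedge B) = \curl \Div(B\otimes B)$. First I would construct a family of smooth approximate solutions $(u^n, B^n)$ of \eqref{HMHD1}-\eqref{HMHD4}, either via a finite-dimensional Galerkin scheme on $\mathcal{V}$, or by regularizing the induction equation with a small hyperdissipative term $-\varepsilon \Delta^2 B$ (with $\varepsilon = 1/n$), combined with a Friedrichs mollification of the quadratic terms. Both strategies give a globally well-posed approximate problem by standard ODE or Galerkin arguments, because the additional regularization dominates the Hall contribution.

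The core a priori bound is obtained by testing the momentum equation against $u^n$ and the induction equation against $B^n$. The advective term $\Div(u^n\otimes u^n)$, the Lorentz term $\Div(B^n\otimes B^n)$ and the induction term $\curl(u^n\wedge B^n)$ cancel in the same way as in standard MHD; the Hall contribution disappears thanks to the algebraic identity
\begin{equation*}
\int_{\R^3}\curl\bigl((\curl B^n)\wedge B^n\bigr)\cdot B^n\,dx
= \int_{\R^3}\bigl((\curl B^n)\wedge B^n\bigr)\cdot \curl B^n\,dx = 0,
\end{equation*}
since $(X\wedge Y)\cdot X = 0$. This yields the approximate energy identity, from which uniform bounds of $(u^n, B^n)$ in $L^\infty(0,T;\mathcal{H})\cap L^2(0,T;\mathcal{V})$ follow, independently of $n$. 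By interpolation with the three-dimensional Sobolev embedding one also controls $(u^n,B^n)$ in $L^{10/3}((0,T)\times \R^3)$, so that the quadratic quantities $u^n\otimes u^n$, $B^n\otimes B^n$ and $u^n\wedge B^n$ are bounded in $L^{5/3}((0,T)\times\R^3)$.

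It remains to pass to the limit, and the Hall term is the only real obstacle. Going back to the equations and using the previous bounds, one estimates $\partial_t u^n$ and $\partial_t B^n$ in $L^{5/3}(0,T; H^{-s}_{\mathrm{loc}}(\R^3))$ for some finite $s$: the presence of the outer curl in front of the Hall contribution costs one more derivative than in the MHD case, but this loss is harmless. The Aubin-Lions-Simon lemma then yields, up to extraction, strong convergence of $(u^n, B^n)$ in $L^2_{\mathrm{loc}}((0,T)\times \R^3)$, and combined with the $L^{10/3}$ bound this is enough to identify the weak limits of every quadratic quantity. In the weak formulation the Hall term is integrated by parts twice and becomes $-\int B^n\otimes B^n : \nabla\curl\phi\,dt\,dx$, so only the weak convergence of $B^n\otimes B^n$ is needed. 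Finally the energy inequality \eqref{nrjHALL} is inherited from the approximate energy identity by lower semicontinuity of the relevant norms under weak and weak-$\ast$ convergence. The hard part will therefore be the compactness step in the presence of the Hall term, which carries one differential order more than the standard MHD nonlinearity; everything else is a routine adaptation of Leray's proof.
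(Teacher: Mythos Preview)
The paper does not actually prove Theorem~\ref{ADFL}: it is quoted as a result of Chae--Degond--Liu \cite{CDL} (and Acheritogaray--Degond--Frouvelle--Liu \cite{HMHD} in the periodic case), and the authors simply recall the statement in Section~2.1. The only place where the paper touches on the construction is the paragraph preceding Theorem~\ref{THsuitableHMHD}, where a Leray-type mollified scheme \eqref{HMHD-suit1}--\eqref{HMHD-suit4} is introduced; there the authors merely sketch the compactness step (energy bounds, temporal estimates, Aubin--Lions yielding strong $L^3_{\rm loc}$ convergence) and again refer to \cite{HMHD,CDL} for details.

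Your outline is correct and is essentially the argument of \cite{HMHD,CDL}. The key observations you make --- the cancellation $\int \curl((\curl B^n)\wedge B^n)\cdot B^n\,dx=0$, the uniform $L^\infty_t\mathcal{H}\cap L^2_t\mathcal{V}$ bounds, the time-derivative estimate in a negative Sobolev space (costing one extra order because of the outer curl in the Hall term), Aubin--Lions compactness, and the recovery of the energy inequality by lower semicontinuity --- are exactly the ingredients used in those references and sketched in the paper. The only minor difference is the choice of approximation: you propose Galerkin or hyperdissipation, while the paper's sketch uses Leray mollification of the quadratic terms; all three lead to the same a priori estimates and the same limiting argument.
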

Actually the first result concerning existence of weak solutions 
to the HMHD system is due to 
Acheritogaray, Degond, Frouvelle and Liu \cite{HMHD}, 
who prove Theorem~\ref{ADFL} in a periodic setting.

%%%%%%%%%%%%%
%%%%%%%%%%%%%
%%%%%%%%%%%%%
%%%%%%%%%%%%%
%%%%%%%%%%%%%
%%%%%%%%%%%%%
%%%%%%%%%%%%%

\subsection{Weak-Strong  uniqueness}

One major issue about the weak solutions mentioned above is their uniqueness.
In particular regarding the Landau-Lifshitz equations, non-uniqueness of weak solutions is proved in \cite{AS}.
On the other hand, up to our knowledge, uniqueness of weak solutions to the HMHD equations has not been proved or disproved yet.

Still, one way to get uniqueness results is to consider  stronger solutions. 
Actually, for both the MLL and HMHD equations,  there also exists some results about the local-in-time  existence and uniqueness of strong solutions. 
Let us mention here the papers \cite{CF01a,CF01b}   for the MLL equations and 
\cite{CDL} for the HMHD equations.

Facing these two theories, the weak  one and the strong one, it is natural to wonder  if there is a weak-strong  uniqueness principle. 
Indeed, such a property ensures that the weak theory is an extension of the strong one, rather than a bifurcation. 

The following results provide such properties for both the MLL and HMHD equations. 
In both cases a key point is that weak solutions satisfy an energy inequality. 
This echoes the similar well-known results for the incompressible Navier-Stokes and Euler equations, cf. for example, respectively, \cite{Chemin} and \cite[Proposition 1]{lellis2010}.
Let us also mention here, in this direction, the recent extension to the full Navier-Stokes-Fourier system by \cite{FN}.

Let us warn the reader  that we will not try here to minimize the smoothness of the strong solutions involved in the following statement. 

\paragraph{Weak-Strong  uniqueness for the MLL equations.} Our first result states that a  strong solution is unique among the class of weak solutions, as given by Theorem \ref{CF}.

\begin{Theorem}
\label{WSMLL}
Consider initial data $(m_0,E_0,H_0)$ as in Theorem \ref{CF}, 
and assume moreover that they are smooth.

Finally assume that 
\begin{itemize}
\item $(m_{2},E_{2},H_{2}) : (0,\infty)\times\R^3 \rightarrow \R^9$ is a global weak solution to 
 the MLL equations 
\eqref{LLGC}-\eqref{LLMGC2}-\eqref{LLMGC3}-\eqref{LLMGC4} 
on $(0,T)\times\R^3$, with initial value $(m_0,E_0,H_0)$, as given by 
Theorem \ref{CF}; 
\item  $(m_{1},E_{1},H_{1}) : (0,\infty)\times\R^3 \rightarrow \R^9$ is a smooth 
solution to the MLL equations 
\eqref{LLGC}-\eqref{LLMGC2}-\eqref{LLMGC3}-\eqref{LLMGC4}, up to some time $T>0$, also with the  initial value $(m_0,E_0,H_0)$.
\end{itemize}
Then $(m_{2},E_{2},H_{2}) = (m_{1},E_{1},H_{1})$ on $(0,T)\times\R^3$.
\end{Theorem}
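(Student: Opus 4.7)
The plan is the classical modulated-energy strategy for weak-strong uniqueness, in the spirit of Serrin and Lions for Navier-Stokes. Set $\delta m := m_2 - m_1$, $\delta E := E_2 - E_1$, $\delta H := H_2 - H_1$, and introduce
$$\mathcal{F}(t) := \int_{\R^3} \Big( |\delta E|^2 + |\delta H|^2 + |\nabla\delta m|^2 \Big)(t,x)\, dx,$$
which vanishes at $t=0$ since the initial data coincide. The goal is a Gronwall-type inequality $\mathcal{F}(T) \leq C_T \int_0^T \mathcal{F}(t)\, dt$, with $C_T$ controlled by $L^\infty$ norms of the smooth solution up to time $T$; this forces $\mathcal{F} \equiv 0$, and hence $\delta E = \delta H = 0$ and $\nabla\delta m = 0$; continuity in time together with $\delta m(0,\cdot)=0$ then gives $\delta m \equiv 0$.

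To realize such an inequality, I would expand $\mathcal{F}(T) = \mathcal{E}_{\rm MLL}^{(1)}(T) + \mathcal{E}_{\rm MLL}^{(2)}(T) - 2\mathcal{C}(T)$, where the cross term is $\mathcal{C}(t) := \int_{\R^3} ( E_1\cdot E_2 + H_1\cdot H_2 + \nabla m_1 : \nabla m_2 )\, dx$, and note that $\mathcal{C}(0) = \mathcal{E}_{\rm MLL}(0)$. For the diagonal terms I would add the energy inequality \eqref{EIM} for $(m_2,E_2,H_2)$ to the energy \emph{equality} satisfied by the smooth $(m_1,E_1,H_1)$, the latter being verified by direct calculation exploiting the algebraic identities $(m\wedge X)\cdot X = 0$, $m \cdot \dt m = 0$ and $m\wedge(m\wedge X) = (m\cdot X)\,m - X$ (whence $\dt m \cdot (\Delta m + H) = \tfrac12 |\dt m|^2$). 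For the cross term, I would compute $d\mathcal{C}/dt$ by testing the weak equations for $(m_2,E_2,H_2)$ against the smooth fields $(E_1,H_1,-\Delta m_1)$ and, symmetrically, the strong equations for $(m_1,E_1,H_1)$ against the weak fields, always arranging integrations by parts so that the derivative falls onto the smooth factor.

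In the Maxwell subsystem the curl--curl cross products cancel via $\int H_1 \cdot \curl E_2 = \int E_2\cdot \curl H_1$, leaving $-\int (H_1\cdot\dt m_2 + H_2\cdot\dt m_1)\, dx$. For the magnetic contribution I would use the Gilbert form \eqref{LLGC} (valid distributionally for $m_2$) and the linearization
$$m_2\wedge\partial_j m_2 - m_1\wedge\partial_j m_1 = \delta m\wedge\partial_j m_1 + m_2\wedge\partial_j\delta m,$$
with analogous decompositions for $m\wedge H$ and $m\wedge \dt m$. The $|\dt m_1|^2$ and $|\dt m_2|^2$ dissipations from the two energy relations then combine with the cross contribution $-2\int(H_2\cdot \dt m_1 + H_1 \cdot \dt m_2)$ and the contribution from $\frac{d}{dt}\int\nabla m_1:\nabla m_2$ to complete the square $|\dt\delta m|^2$, modulo remainder terms in which $\dt \delta m$ or $\nabla\delta m$ or $\delta H$ is paired with factors of the form $\delta m\wedge X_1$ (with $X_1 = \Delta m_1 + H_1$ smooth). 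One small fraction of $\int|\dt\delta m|^2$ is absorbed into the positive dissipation via Young's inequality, and the rest is estimated pointwise in time by $C_T\, \mathcal{F}(t)$ using the smoothness of $(m_1,E_1,H_1)$ together with $|m_i| = 1$.

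The main obstacle is algebraic--regularity bookkeeping in the $m$-equation: the nonlinearities are cubic, the weak solution only provides $\nabla m_2 \in L^\infty_t L^2_x$ and $\dt m_2 \in L^2_{t,x}$ (so $\Delta m_2$ is merely a distribution), and one must use the sphere constraint $m_i\cdot\dt m_i = m_i\cdot \partial_j m_i = 0$ systematically in order to eliminate the otherwise un-controllable terms carrying $m_i\wedge \dt m_i$. Every transfer of derivatives must be pushed onto the smooth factor, and each pairing has to be justified by density from the weak formulation tested against genuine smooth test functions. Once these cancellations and estimates are in place, Gronwall's lemma gives $\mathcal{F}\equiv 0$ on $[0,T]$, from which the identification $(m_2,E_2,H_2) = (m_1,E_1,H_1)$ follows.
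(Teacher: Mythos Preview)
Your strategy is exactly the paper's: expand the relative energy, combine the energy inequality for the weak solution with the energy equality for the smooth one, compute the cross term by testing the weak formulation against $\partial_t m_1$, $-2\Delta m_1$, $H_1$, $E_1$, rearrange the cubic terms into triple products involving the differences, absorb part of $\int|\partial_t\delta m|^2$, and close by Gronwall.

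There is one genuine imprecision. The remainder terms you describe pair $\partial_t\delta m$, $\nabla\delta m$ or $\delta H$ against factors $\delta m\wedge X_1$ with $X_1$ smooth; after Young's inequality you are left with $\int_{\R^3}|\delta m|^2$, and this is \emph{not} controlled by your $\mathcal{F}(t)$, which only contains $|\nabla\delta m|^2$, $|\delta E|^2$, $|\delta H|^2$. The constraint $|m_i|=1$ gives $|\delta m|\le 2$ but no smallness, so the claim ``estimated pointwise in time by $C_T\,\mathcal{F}(t)$'' does not close as written. The paper fixes this with Poincar\'e's inequality in the time variable: since $\delta m(0,\cdot)=0$ and $\partial_t\delta m\in L^2_{t,x}$, one has $\int_0^T\!\int_{\R^3}|\delta m|^2 \le o(T)\int_0^T\!\int_{\R^3}|\partial_t\delta m|^2$, which for $T$ small is absorbed into the dissipation; the argument is then repeated on successive short intervals. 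With this adjustment your sketch matches the paper's proof.
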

Let us mention that, up
to our knowledge, this result is already new for the Landau-Lifshitz equations:
\begin{equation}
\label{heu}
\dt m +  m \wedge \dt m =  2 m \wedge \Delta m     .
\end{equation}
Actually we will first give a proof of the corresponding result for the global weak solution  to the Landau-Lifshitz equations, as given in  \cite[Theorem 1.4]{AS}, and then we will give the proof of Theorem \ref{WSMLL}.

Let us therefore state here the case of the Landau-Lifshitz equation \eqref{heu}.

\begin{Theorem}
\label{WSLL}
Consider an initial data $m_0$ in $L^{\infty} (\R^3 ; \R^3)$ such that 
$|m_0| = 1$ almost everywhere and  such that $\nabla m_0$ is in $L^{2} (\R^3 ; \R^{9})$.
Assume moreover that  $m_0$ is smooth. 
Finally assume that 
\begin{itemize}
\item $m_{2}$ is a global weak solution  of \eqref{heu} on $(0,\infty)\times\R^3$ satisfying the energy inequality: for almost every $T\geq0$,
\begin{equation}
\label{LLenergy}
J_{\rm LL}  [ m_{2}](T) :=  \big(  \int_{\R^3}   | \nabla m_{2} |^{2}  \, dx \big) (T)  
 +  \int_{0}^T  \int_{\R^3}   |\partial_{t}  m_{2}|^{2}  \, dx \, dt
\leqslant \int_{\R^3}   | \nabla m_{0} |^{2}  \, dx  .
\end{equation}
\item  $ m_{1}$ is a smooth solution to the Landau-Lifshitz equation  \eqref{heu} up to some time $T>0$, with the same initial data $m_{0}$.
\end{itemize}
Then $m_{2} = m_1$ on $(0,T) \times\R^3$.
\end{Theorem}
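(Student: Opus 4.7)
The plan is a relative-energy argument. I introduce the functional
$$\Psi(T) := \int_{\R^3} |\nabla m_1 - \nabla m_2|^2(T,x)\, dx,$$
which vanishes at $T = 0$ since $m_1(0) = m_2(0) = m_0$. Using the polarization identity $\Psi = \|\nabla m_1\|_{L^2}^2 + \|\nabla m_2\|_{L^2}^2 - 2\int\nabla m_1\cdot\nabla m_2$ and combining the energy equality for the smooth solution $m_1$ with the assumed energy inequality \eqref{LLenergy} for the weak solution $m_2$, I obtain
$$\Psi(T) + \int_0^T (\|\dt m_1\|_{L^2}^2 + \|\dt m_2\|_{L^2}^2)\, dt \leq 2\|\nabla m_0\|_{L^2}^2 - 2\int \nabla m_1(T)\cdot\nabla m_2(T)\, dx,$$
reducing the problem to a lower bound for the cross term.

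Next I write $\int\nabla m_1(T)\cdot\nabla m_2(T) = \|\nabla m_0\|_{L^2}^2 + \int_0^T \frac{d}{dt}\int \nabla m_1\cdot\nabla m_2 \, dt$, and express the time derivative, via integration by parts, as $-\int\dt m_1\cdot\Delta m_2 - \int\dt m_2\cdot\Delta m_1$. The first integral is reinterpreted as $-\int\nabla\dt m_1\cdot\nabla m_2$, which is well-defined since $m_1$ is smooth and $\nabla m_2\in L^\infty(L^2)$. The second integral is evaluated by inserting the admissible smooth test function $\Delta m_1$ in the weak formulation \eqref{LLGC} of the equation for $m_2$, producing a sum of integrals involving $m_2\wedge\dt m_2$ and $m_2\wedge\partial_i m_2$ paired with derivatives of $\Delta m_1$. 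Writing $w := m_2 - m_1$ and expanding $m_2 = m_1 + w$ throughout, the wedge-product identities $(a\wedge b)\cdot a = 0$ combined with the unit-length constraints $|m_1| = |m_2| = 1$ produce cancellations of all nominally highest-order terms, leaving only quantities that contain at least one factor of $w$, $\nabla w$, or $\dt w$.

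In the last step these remainders are estimated via H\"older and Young inequalities, using the $L^\infty$ control on $m_1$, $\nabla m_1$, $\Delta m_1$, $\dt m_1$, $\nabla\dt m_1$ (inherited from the smoothness of the strong solution) together with the weak-regularity bounds $|m_2|\leq 1$, $\nabla m_2\in L^\infty(L^2)$, $\dt m_2\in L^2(L^2)$. Terms involving $\dt w$ are split into a portion absorbed by the dissipation $\int_0^T(\|\dt m_1\|^2+\|\dt m_2\|^2)$ already present on the left-hand side (after reconstructing $\int_0^T\|\dt w\|^2$ from it), while the remainder is controlled by $\Psi(t)$. The outcome is a Gronwall-type inequality $\Psi(T) \leq C(m_1)\int_0^T \Psi(t)\, dt$, whence $\Psi \equiv 0$ on $(0,T)\times\R^3$; combined with $m_1(0) = m_2(0)$ this forces $m_1 = m_2$.

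The main technical difficulty lies in the cross-term analysis: since the weak formulation of \eqref{LLGC} only accepts first-order derivatives of $m_2$, every occurrence of $\Delta m_2$ or $\nabla \dt m_2$ that arises formally must be traded, through integration by parts or use of the weak form, for a derivative on the smooth factor $m_1$. Exploiting the wedge-product structure and the unit-sphere constraint to eliminate the highest-order remainders is what makes the argument close; a secondary difficulty is that $\Psi$ only controls the $\dot{H}^1$-seminorm of $w$, so the zeroth-order contributions produced by the cancellation must be handled by the $L^\infty$ bound $|w|\leq 2$ and the vanishing of $w(0)$.
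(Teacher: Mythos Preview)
Your relative-energy strategy is the same as the paper's, and the outline is sound. There is, however, a step you gloss over that carries real content. When you say $\int_0^T\|\dt w\|_{L^2}^2$ can be ``reconstructed'' from the dissipation $\int_0^T(\|\dt m_1\|_{L^2}^2+\|\dt m_2\|_{L^2}^2)$ on the left, you are implicitly moving the cross term $2\int_0^T\!\!\int\dt m_1\cdot\dt m_2$ to the right; but this term is not among the remainders produced by your analysis of the gradient cross term, and it must be processed separately by using the equations again. The paper does this by inserting $\Psi=\dt m_1$ (in addition to $\Psi=\Delta m_1$) into the weak formulation for $m_2$, and symmetrically by testing the strong equation for $m_1$ against $\dt m_2$; only then does this pair collapse to $\int\det(w,\dt w,\dt m_1)$, which has the right structure. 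Likewise, the integral $\int\nabla\dt m_1\cdot\nabla m_2$ that you record as ``well-defined'' still requires the differentiated strong equation tested against $\partial_i m_2$ before it reduces to $w$-remainders. In short, the paper uses the equations four times (two test functions in the weak form for $m_2$, two pairings of the strong equation for $m_1$), not the single $\Delta m_1$ test you describe; each of the three cross terms in your $K$ needs its own pair of identities.

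On the zeroth-order factors of $w$: the bound $|w|\le 2$ by itself gives no smallness and does not close the estimate. What the paper actually uses (and what your mention of $w(0)=0$ presumably points to) is a Poincar\'e-in-time inequality $\int_0^T\!\!\int|w|^2\le o(T)\int_0^T\!\!\int|\dt w|^2$, which for small $T$ allows the undifferentiated $w$ in the final remainder (every term of the paper's identity \eqref{distingo} carries such a factor) to be absorbed by the dissipation, after which one iterates in time.
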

\paragraph{Weak-Strong  uniqueness for the HMHD equations.} Let us now turn to the case of the HMHD equations.

\begin{Theorem}
\label{WSHMHD}
Consider initial data $u_0$ and $B_0$  in $\mathcal{H}$, 
and assume moreover that they are smooth. 

Finally assume that 
\begin{itemize}
\item  $(u_2 ,B_2)$ is a global weak solution to the HMHD model 
\eqref{HMHD1}-\eqref{HMHD4}, associated with the initial data $(u_0 ,B_0 )$, as in Theorem \ref{WSMLL}; 
\item  $(u_1 ,B_1)$ is a smooth solution the HMHD model 
\eqref{HMHD1}-\eqref{HMHD4} on $(0,T)$, for some $T>0$, also associated with the initial data $(u_0 ,B_0 )$.
\end{itemize}
Then $(u_2 ,B_2) = (u_1 ,B_1)$  on $(0,T)\times\R^3$.
\end{Theorem}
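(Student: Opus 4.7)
The strategy is the standard weak-strong uniqueness argument, adapted to the presence of the Hall term. I set $w := u_2 - u_1$, $b := B_2 - B_1$, and introduce the energy difference
$$J(t) := \tfrac12\int_{\R^3}\bigl(|w|^2+|b|^2\bigr)(t,x)\,dx.$$
The aim is to establish the differential inequality
$$J(T) + \int_0^T\!\!\int_{\R^3}\bigl(|\nabla w|^2+|\nabla b|^2\bigr)\,dx\,dt \leq C\int_0^T J(t)\,dt,$$
with $C$ depending only on the $L^\infty_{t,x}$ norms of $(u_1,B_1)$ and their derivatives up to a suitable order on $[0,T]\times\R^3$. Since $J(0)=0$, Gr\"onwall then yields $J\equiv 0$ on $[0,T]$, hence $w\equiv 0$ and $b\equiv 0$.

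To derive this inequality, expand
$$J(T) = \tfrac12\bigl(\|u_2\|^2_{L^2}+\|B_2\|^2_{L^2}\bigr)(T) - \langle u_1,u_2\rangle_{L^2}(T) - \langle B_1,B_2\rangle_{L^2}(T) + \tfrac12\bigl(\|u_1\|^2_{L^2}+\|B_1\|^2_{L^2}\bigr)(T).$$
The first group is controlled by the energy inequality \eqref{nrjHALL} satisfied by the weak solution; the last is handled by the exact energy identity, valid for the smooth solution; the two cross-terms are computed by using $u_1$ and $B_1$ as test functions in the weak formulation for $(u_2,B_2)$, and symmetrically testing the classical $(u_1,B_1)$ equations against $u_2$ and $B_2$, all integrations by parts being legitimate thanks to the smoothness of $(u_1,B_1)$. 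Combining the three viscous contributions produces the square $-\int_0^T\int(|\nabla w|^2+|\nabla b|^2)$ on the left-hand side. The convective and magnetic-pressure nonlinearities of classical MHD type yield, after expansion around $(u_1,B_1)$, trilinear terms such as
$$\int \nabla u_1\,{:}\,(w\otimes w)\,dx,\qquad \int \nabla u_1\,{:}\,(b\otimes b)\,dx,\qquad \int \nabla B_1\,{:}\,(b\otimes w)\,dx,$$
each bounded by $(\|\nabla u_1\|_{L^\infty}+\|\nabla B_1\|_{L^\infty})\, J(t)$ and thus admissible.

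The main obstacle, specific to HMHD, is the Hall nonlinearity $\curl\bigl((\curl B)\wedge B\bigr)$. Its contribution to $\frac{d}{dt}\langle B_1,B_2\rangle_{L^2}$ reads
$$-\int \curl B_1\cdot\bigl((\curl B_2)\wedge B_2\bigr)\,dx \;-\; \int \curl B_2\cdot\bigl((\curl B_1)\wedge B_1\bigr)\,dx.$$
Applying the scalar triple-product identity $a\cdot(b\wedge c) = c\cdot(a\wedge b) = b\cdot(c\wedge a)$ in each integral, this expression collapses to
$$-\int (B_2-B_1)\cdot\bigl(\curl B_1\wedge\curl B_2\bigr)\,dx = -\int b\cdot\bigl(\curl B_1\wedge\curl B_2\bigr)\,dx.$$
Decomposing $\curl B_2=\curl B_1+\curl b$ and using $v\wedge v=0$, only the genuinely quadratic-in-$b$ piece $-\int b\cdot(\curl B_1\wedge \curl b)\,dx$ survives, which by Cauchy--Schwarz and Young's inequality is bounded by $\tfrac12\|\curl b\|_{L^2}^2 + C\|\curl B_1\|^2_{L^\infty}\|b\|_{L^2}^2$. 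Since $b$ is divergence-free on $\R^3$, the first term equals $\tfrac12\|\nabla b\|_{L^2}^2$ and is absorbed by the parabolic dissipation of $b$; the second enters the Gr\"onwall argument thanks to the smoothness of $B_1$ on $[0,T]$. This algebraic reduction, which makes the seemingly third-order Hall contribution collapse to a term quadratic in $b$ weighted by $\curl B_1$, is the heart of the proof and the step requiring the most care.
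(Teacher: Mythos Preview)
Your proof is correct and follows essentially the same route as the paper's. The paper chooses to present only the simplified Hall-only model $\partial_t B + \curl((\curl B)\wedge B)=\Delta B$, declaring the MHD part routine, whereas you treat the full system; but the heart of both arguments is identical: expand the relative energy via the weak energy inequality and the strong energy identity, compute the cross terms by testing with the smooth solution, and reduce the Hall contribution via the triple-product identity to the single term $\displaystyle\int \det(\curl B_1,\,b,\,\curl b)$, which is then absorbed by the dissipation plus Gr\"onwall.
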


We will prove Theorem  \ref{WSHMHD} in a simplified setting which focuses on the difficulty due to the Hall effect. 
Actually the corresponding statement for the MHD equations is well-known, see for instance \cite{DL}, and 
the extension to the general case by combining the corresponding proof and the proof below for the simplified Hall model is routine.

%%%%%%%%%%%%%
%%%%%%%%%%%%%
%%%%%%%%%%%%%
%%%%%%%%%%%%%
%%%%%%%%%%%%%
%%%%%%%%%%%%%
%%%%%%%%%%%%%

\subsection{Local energy and helicity identities}
\label{DEnsi}

In this section we recall the explicit formulations  of the formal local identities hinted in the introduction. 
We start with the energy identities.

\paragraph{Local energy identity for the MLL equations:} formally, 
\begin{equation}
\label{LLlocalsansa}
\dt e_{\rm MLL} + 
d_{\rm MLL}  + 
\Div f_{\rm MLL} 
= 0  ,
\end{equation}
where
\begin{equation}
\label{edfMLL}
e_{\rm MLL}  :=  |E|^{2} + |H|^{2} + |\nabla m|^{2}  , \quad d_{\rm MLL} :=  |\dt m |^{2} , \quad 
f_{\rm MLL}  := - 2 (   \dt m \cdot \partial_i m  )_{i=1,2,3} + 2  H \wedge E .
 \end{equation}

\paragraph{Local energy identity for the HMHD equations:} formally, 
\begin{equation}
\label{HMHDlocalsansa}
\dt e_{\rm HMHD} + 
d_{\rm HMHD}  + 
\Div f_{\rm HMHD} 
= 0  ,
\end{equation}
where 
\begin{gather}
\label{edfHMHD}
  e_{\rm HMHD} :=  \frac{1}{2} \Big( |u |^{2} +   |B |^{2}   \Big) , \quad 
d_{\rm HMHD} :=  |\curl u  |^2 + |\curl B  |^2 ,
\\ f_{\rm HMHD}  :=  ( \frac12 |  u |^2 + p ) u
 + B \wedge  ( u \wedge B )
 + ( \curl u )  \wedge u
 +  ( \curl B )  \wedge B
 +  \big( ( \curl B )  \wedge B \big)  \wedge B
 .
\end{gather}

Let us mention here that, despite the global energy identities are the same for the HMHD and MHD equations, their local counterparts are different. Indeed, for the MHD equations, one has to drop out the last term in the flux density above. 
\paragraph{Local energy identity for the MHD equations:} formally, 
\begin{equation}
\label{MHDlocalsansa}
\dt e_{\rm MHD} + 
d_{\rm MHD}  + 
\Div f_{\rm MHD} 
= 0  ,
\end{equation}
where 
\begin{gather}
\label{edfMHD1}
  e_{\rm MHD} :=  \frac{1}{2} \Big( |u |^{2} +   |B |^{2}   \Big)  = e_{\rm HMHD}
, \quad 
d_{\rm MHD} :=  |\curl u  |^2 + |\curl B  |^2 = d_{\rm HMHD}  ,
\end{gather}
but
\begin{gather}
\label{edfMHD2}
 f_{\rm MHD}  :=  ( \frac12 |  u |^2 + p ) u
 + B \wedge  ( u \wedge B )
 + ( \curl u )  \wedge u
 +  ( \curl B )  \wedge B
\Big) .
\end{gather}

Let us now tackle the helicity identities.

\paragraph{Local magneto-helicity identity for the HMHD equations:}  formally, 
\begin{equation}
\label{mlocalsansa}
\dt h_{m,{\rm HMHD}} + 
d_{m,{\rm HMHD}}  + 
\Div f_{m,{\rm HMHD}} 
= 0  ,
\end{equation}
where
\begin{gather}
\label{hdfm}
  h_{m,{\rm HMHD}} :=  A \cdot B 
, \,
d_{m,{\rm HMHD}} :=     2 B \cdot  \curl B
  , \,
f_{m,{\rm HMHD}} := ( 2 (u- \curl B ) \wedge B - 2 \curl B - \partial_t A ) \wedge A .
\end{gather}

Once again, despite the global magneto-helicity identities are the same for the HMHD 
and MHD equations, their local counterparts are not the same. Indeed, for the MHD equations, on has to drop out the last term in the flux density above. 

\paragraph{Local magneto-helicity identity for the MHD equations:}  formally, 
\begin{equation}
\label{mlocalsansaMHD}
\dt h_{m,{\rm MHD}} + 
d_{m,{\rm MHD}}  + 
\Div f_{m,{\rm MHD}} 
= 0  ,
\end{equation}
where
\begin{gather}
\label{hdfmMHD1}
  h_{m,{\rm MHD}} :=  A \cdot B = h_{m,{\rm HMHD}}
, \quad 
d_{m,{\rm MHD}} :=     2 B \cdot  \curl B = d_{m,{\rm HMHD}}
  , 
  \end{gather}
but
\begin{gather}
\label{hdfmMHD2}
  f_{m,{\rm MHD}} := ( 2 u \wedge B - 2 \curl B - \partial_t A ) \wedge A .
\end{gather}

\paragraph{Local fluid helicity  identity  for the MHD and HMHD equations:} 
formally, 
\begin{equation}
\label{localfluideHeli}
\dt h_{f}
+ d_{f}  
 + \Div   f_{f} 
  = 0, 
\end{equation}
where
\begin{equation*}
h_{f}   := u \cdot \omega , \,
 d_{f}  := 2 \omega  \cdot (\curl \omega  + B \wedge \curl  B ) , \,
 f_{f}  := (\omega \cdot u ) u + (p + \frac12 u^{2} ) \omega  - u \wedge ( \curl  \omega + B  \wedge \curl  B ) .
\end{equation*}

\paragraph{Local crossed fluid-magneto-helicity identity  for the MHD equations:} formally, 
\begin{gather}
\label{localfm}
\dt h_{fm}
+ d_{fm}  
 + \Div   f_{fm} 
  = 0, 
\end{gather}
where
\begin{gather}
\label{hdffm}
h_{fm} := u \cdot B , \,
 d_{fm}  := 2 \omega \cdot \curl B , \, 
f_{fm} :=   (u\cdot B)u + (p -\frac12 |u |^{2} )B + (\curl u ) \wedge B +  (\curl B ) \wedge u   .
\end{gather}

\paragraph{Local total  fluid-magneto-helicity identity for HMHD equations:} formally, 
\begin{equation}
\label{localmiHeli}
\dt h_{f+m}
+ d_{f+m}  
 + \Div   f_{f+m} 
  = 0, 
\end{equation}
where
\begin{gather*}
h_{f+m} :=  ( u+A) \cdot (\omega+ B) , \, 
d_{f+m} := 2  (\omega+ B) \cdot \curl (\omega+ B) ,\, 
\\  f_{f+m} := \Big( \dt  ( u+A) - 2 u \wedge (\omega+ B)  + 2  \curl (\omega+ B) \Big) \wedge ( u+A)     .
\end{gather*}
%

%%%%%%%%%%%%%
%%%%%%%%%%%%%
%%%%%%%%%%%%%
%%%%%%%%%%%%%
%%%%%%%%%%%%%
%%%%%%%%%%%%%
%%%%%%%%%%%%%

\subsection{Regularization of quadratic terms}
\label{choice}

The above weak solutions are of course solutions on $(0,\infty)\times\R^3$ 
in the distributional sense. In the sequel, for a given such solution, 
we shall compare the difference between the equation, with each term 
regularized, and the same (linear or quadratic) terms obtained from 
the regularization of the solution. We thus introduce some notations. 

Let $\psi \in C^\infty_{\rm c}(\R^3 ; \R)$ be nonnegative, and such that 
$\displaystyle \int_{\R^3} \psi(x) \, dx =1$. For all $\eps\in(0,1)$, 
we define the usual mollifier $\psi^\eps := \eps^{-3} \psi (\cdot/\eps)$. 
Then, for any function $u$ on $\R^3$, we set 
\begin{equation}
\label{notaeps}
u_{\eps}  (x) =   ( \psi^\eps * u ) (x) = 
\int_{\R^{3}} \psi^{\eps} (y)  u(x-y) dy .
\end{equation}

For all $\eps\in(0,1)$ and functions $\phi^1, \phi^2$, we also define 
\begin{eqnarray}
\label{convol0}
\mathcal{A}^{\eps} [\phi^1,\phi^2] := 
( \phi^1 \cdot \phi^2 )_{\eps} - \phi^1_\eps \cdot \phi^2_\eps , \\
\label{convol1}
\mathcal{B}^{\eps} [\phi^1,\phi^2] := 
( \phi^1 \wedge \phi^2 )_{\eps} - \phi^1_\eps \wedge \phi^2_\eps , \\  
\label{convol2}
\mathcal{C}^{\eps} [\phi^1,\phi^2 ] := 
(\phi^1 \otimes \phi^2)_{\eps} - \phi^1_\eps \otimes \phi^2_\eps .
\end{eqnarray}
%

%%%%%%%%%%%%%%%%%%%%%%%
%%%%%%%%%%%%%%%%%%%%%%%
%%%%%%%%%%%%%%%%%%%%%%%
%%%%%%%%%%%%%%%%%%%%%%%
%%%%%%%%%%%%%%%%%%%%%%%
%%%%%%%%%%%%%%%%%%%%%%%

\subsection{Some Besov type conditions}
 
 Our goal is to provide some  sufficient conditions, 
similar to \eqref{cbesov}, which rule out anomalous dissipation 
in the MLL and in the HMHD equations. 

The Fourier transform $\mathcal{F}$ is defined on the space of integrable functions $f \in L^1 (\R^3)$ by 
$(\mathcal{F} f) (\xi) := \int_{\R^3} e^{-2i\pi x\cdot\xi} f(x) dx$, 
and extended to an automorphism of the space $\mathcal{S}'(\R^3)$  
of  tempered distributions, which is the dual of the Schwartz space 
$\mathcal{S}(\R^3)$ of rapidly decreasing functions. 
We consider the following extensions of condition \eqref{cbesov}.
\begin{Definition}[] 
\label{oula}
Let $T>0$, $\alpha \in (0,1)$ and $p,r \in [1,\infty]$. 
We denote by $\mathcal{S}'_h$ the space of tempered 
distributions $u$ on $(0,T) \times \R^3$ such that 
for all $\theta\in C^\infty_{\rm c}(\R^3)$, there holds 
$$
\| \theta(\lambda D)u \|_{L^\infty((0,T)\times\R^3)} 
\tendlorsque{\lambda}{\infty} 0,
$$
where $\theta(D)$ is the Fourier multiplier defined by 
$\theta(D)u=\mathcal{F}^{-1}(\theta\mathcal{F}u)$. 
For every function $u$ on $(0,T) \times \R^3$ 
we define, for $(t,y) \in (0,T) \times (\R^3\setminus\{0\})$, 
\begin{equation*}
f_{\alpha,p} [u] (t,y) := 
\frac{\| u(t,\cdot-y)-u(t,\cdot) \|_{L^{p}(\R^3)}}{| y |^{\alpha} } .
\end{equation*}
We denote 
\begin{itemize}
\item by $\widetilde{L}^r(0,T ; \dot{B}^\alpha_{p,\infty} (\R^3))$, 
the space of functions $u$ on $(0,T) \times \R^3$, 
belonging to $\mathcal{S}'_h$, which satisfy
\begin{equation*}
\sup_{y} \| f_{\alpha,p}[u](\cdot,y) \|_{L^r(0,T)} < \infty  , 
\end{equation*}
equipped with the seminorm 
$$
\| u \|_{\widetilde{L}^r(0,T ; \dot{B}^\alpha_{p,\infty} (\R^3))} :=  
\sup_{y} \| f_{\alpha,p}[u] (\cdot,y) \|_{L^r(0,T)} ; 
$$
\item by $\widetilde{L}^r(0,T ; \dot{B}^{\alpha +1}_{p,\infty}(\R^3))$, 
the subspace of the functions $u$ in   
$\widetilde{L}^r(0,T ; \dot{B}^\alpha_{p,\infty} (\R^3))$ 
which satisfy, for $i=1,2,3$, 
$\partial_{i} u \in \widetilde{L}^r(0,T ; \dot{B}^\alpha_{p,\infty}(\R^3))$, 
equipped with the seminorm 
$$
\| u \|_{\widetilde{L}^r(0,T ; \dot{B}^{\alpha+1}_{p,\infty} (\R^3))} := 
\| u \|_{\widetilde{L}^r(0,T ; \dot{B}^\alpha_{p,\infty} (\R^3))} 
+ \sum_{i=1}^3 
\| \partial_i u \|_{\widetilde{L}^r(0,T ; \dot{B}^\alpha_{p,\infty} (\R^3))}; 
$$
\item by $\widetilde{L}^r(0,T ; \dot{B}^\alpha_{p,c_0} (\R^3))$, 
the subspace of the functions $u$ in 
$\widetilde{L}^r(0,T ; \dot{B}^\alpha_{p,\infty} (\R^3))$  
which satisfy
\begin{equation*}
\| f_{\alpha,p} [u] (\cdot,y) \|_{L^r(0,T)} 
\rightarrow 0 \text{ when } y \rightarrow 0;
\end{equation*}
\item by $\widetilde{L}^r(0,T ; \dot{B}^{\alpha +1}_{p,c_0} (\R^3))$, 
the subspace of the functions $u$ in 
$\widetilde{L}^r(0,T ; \dot{B}^\alpha_{p,c_0} (\R^3))$ 
which satisfy, for $i=1,2,3$, 
$\partial_{i} u \in \widetilde{L}^r(0,T ; \dot{B}^\alpha_{p,c_0}(\R^3))$;
\item by 
$L^r (0,T ; L^p (\R^3))_{\rm loc}$ 
the space of functions $u$ on $(0,T)\times\R^3$ such that 
for all $\chi \in C^\infty_{\rm c}((0,T)\times\R^3)$, $\chi u$ belongs to 
$L^r (0,T ; L^p (\R^3))$. 
\item by 
$\widetilde{L}^r(0,T ; \dot{B}^\alpha_{p,\alpha}(\R^3))_{\rm loc}$, where $\alpha$ holds for $\infty$ or $c_0$, 
the space of functions $u$ on $(0,T)\times\R^3$ such that 
for all $\chi \in C^\infty_{\rm c}((0,T)\times\R^3)$, $\chi u$ belongs to 
$\widetilde{L}^r(0,T ; \dot{B}^\alpha_{p,\alpha}(\R^3))$. 
\end{itemize}
\end{Definition}

In particular, condition \eqref{cbesov} is equivalent to  
$u \in \widetilde{L}^{3} (0,T ; \dot{B}^{1/3}_{3,c_0} (\R^3 ))$.

%Here the equations at stake require a more involved use of the various exponents.

The notation $\widetilde{L}$, rather than $L$, is used to emphasize 
the fact that time integration is performed before taking the supremum in $y$. 
This contrasts with the more classical space $L^r(0,T ; \dot{B}^\alpha_{p,\infty} (\R^3))$, the  space of the functions $u$ on $(0,T) \times \R^3$  which satisfy
 $\| \sup_{y} f_{\alpha,p}[u](\cdot,y) \|_{L^r(0,T)} < \infty  $, 
equipped with the seminorm 
$\| u \|_{L^r(0,T ; \dot{B}^\alpha_{p,\infty} (\R^3))} :=  
 \| \sup_{y} f_{\alpha,p}[u] (\cdot,y) \|_{L^r(0,T)} .$
It is not difficult to see that $ L^r(0,T ; \dot{B}^\alpha_{p,\infty} (\R^3)) \subset \widetilde{L}^r(0,T ; \dot{B}^\alpha_{p,\infty} (\R^3)) $.
 This kind of spaces has been introduced by Chemin and Lerner in \cite{CheminLerner}.

%%%%%%%%%%%%%%%%%%%%%%%
%%%%%%%%%%%%%%%%%%%%%%%
%%%%%%%%%%%%%%%%%%%%%%%
%%%%%%%%%%%%%%%%%%%%%%%
%%%%%%%%%%%%%%%%%%%%%%%
%%%%%%%%%%%%%%%%%%%%%%%

\subsection{Anomalous dissipation for the  MLL equations}
We have the following result.
\begin{Theorem}
\label{LLMonsag}
Let $(m,E,H)$ be a weak solution to the MLL equations 
\eqref{LLGC}-\eqref{LLMGC2}-\eqref{LLMGC3}-\eqref{LLMGC4} 
given by Theorem \ref{CF}. 

 Let $d^\mathfrak{a}_{\rm MLL}$ denotes the local anomalous energy  dissipation for the  MLL equations:
\begin{equation}
\label{LLlocal}
d^\mathfrak{a}_{\rm MLL} :=
\dt e_{\rm MLL} + 
d_{\rm MLL}  + 
\Div f_{\rm MLL}    ,
\end{equation}
where $(e_{\rm MLL} , d_{\rm MLL}  ,  f_{\rm MLL}  )$ is given by \eqref{edfMLL}.

\begin{enumerate} [i)]
\item Then the local anomalous energy  dissipation $d^\mathfrak{a}_{\rm MLL}$ can be obtained as follows. 
Let 
\begin{gather}
\label{eMLLM}
 d^{\mathfrak{a},\eps}_{\rm MLL}  :=
  - \mathcal{B}^{\eps} [m ,  \dt m -2 (H +\Delta m) ] \cdot ( \dt m_\eps  -2 (H_\eps  + \Delta m_\eps)) .
    \end{gather}
Then, 
\begin{equation*}
 d^{\mathfrak{a},\eps}_{\rm MLL}  \rightarrow d^{\mathfrak{a}}_{\rm MLL} \text{ in } \mathcal{D}' \big( (0,\infty)\times\R^3 ; \R \big)   \text{ when } \eps \rightarrow  0 ,
\end{equation*}
and this holds true whatever is the mollifier chosen in Section \ref{choice}.

\item Assume furthermore that $m$ belongs to 
$\widetilde{L}^{3} (0,T ; \dot{B}^\alpha_{p,c_0}(\R^3))_{\rm loc}$
for some $\alpha \in (3/2,2)$ and 
\begin{equation}
\label{ship}
p := \frac{9}{3\alpha - 1} .
\end{equation}
Then the local  anomalous energy dissipation $d_{\rm MLL}^{\mathfrak{a}} $ vanishes.
\end{enumerate}
\end{Theorem}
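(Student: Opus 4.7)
The plan is to adapt the Constantin--E--Titi strategy to the Gilbert form \eqref{LLGC}. The key algebraic observation is that, for smooth solutions, the vector identity $\partial_i m\wedge\partial_i m=0$ gives $\sum_i\partial_i(m\wedge\partial_i m)=m\wedge\Delta m$, so that \eqref{LLGC} can be recast as
\begin{equation*}
\dt m = -\,m\wedge F,\qquad F := \dt m - 2(H+\Delta m).
\end{equation*}
This is the algebraic backbone on which the whole argument rests.

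\emph{Part~(i).} I would mollify the above identity at scale $\eps$ to obtain $\dt m_\eps = -\,m_\eps\wedge F_\eps - \mathcal{B}^\eps[m,F]$. Taking the scalar product with $F_\eps$ and exploiting the pointwise cancellation $(m_\eps\wedge F_\eps)\cdot F_\eps = 0$ yields
\begin{equation*}
\mathcal{B}^\eps[m,F]\cdot F_\eps = -\,\dt m_\eps\cdot F_\eps.
\end{equation*}
The Maxwell block \eqref{LLMGC2}--\eqref{LLMGC3} being linear, it commutes exactly with mollification and produces $\tfrac12\dt(|E_\eps|^2+|H_\eps|^2)+\Div(E_\eps\wedge H_\eps)+H_\eps\cdot\dt m_\eps=0$. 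Combining these with the pointwise identity $\dt m_\eps\cdot\Delta m_\eps=-\tfrac12\dt|\nabla m_\eps|^2+\partial_i(\dt m_\eps\cdot\partial_i m_\eps)$ and reading off the densities \eqref{edfMLL} on mollified fields, one recovers the mollified version of \eqref{LLlocalsansa} with right-hand side equal to $d^{\mathfrak{a},\eps}_{\rm MLL}$ from \eqref{eMLLM}. As $\eps\to 0$, every quadratic product on the left converges in $\mathcal{D}'((0,\infty)\times\R^3)$ to its unmollified counterpart, because $(\nabla m,\dt m,E,H)\in L^2((0,T)\times\R^3)$ by Theorem~\ref{CF}; hence $d^{\mathfrak{a},\eps}_{\rm MLL}$ admits a distributional limit, which by \eqref{LLlocal} must coincide with $d^\mathfrak{a}_{\rm MLL}$, proving independence of the mollifier.

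\emph{Part~(ii).} Splitting $F$ by trilinearity, the anomalous dissipation decomposes into contributions involving $\mathcal{B}^\eps[m,\dt m]$, $\mathcal{B}^\eps[m,H]$ and $\mathcal{B}^\eps[m,\Delta m]$. The first two vanish in $\mathcal{D}'$ by standard commutator convergence, since $m\in L^\infty$ has extra local integrability through the Besov hypothesis while $\dt m$ and $H$ sit in $L^2_{t,x}$. The delicate contribution is
\begin{equation*}
I^\eps(\chi) := \iint \chi\,\mathcal{B}^\eps[m,\Delta m]\cdot\Delta m_\eps\,dx\,dt,\qquad \chi\in C^\infty_{\rm c}\bigl((0,T)\times\R^3\bigr).
\end{equation*}
The algebraic identity $\mathcal{B}^\eps[m,\Delta m]=\sum_i\partial_i\mathcal{B}^\eps[m,\partial_i m]$ (which again uses $\partial_i m\wedge\partial_i m=0$ at both the unmollified and mollified levels) enables a first integration by parts that transfers a derivative onto $\chi\Delta m_\eps$. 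The Constantin--E--Titi identity
\begin{equation*}
\mathcal{B}^\eps[m,\partial_i m](x) = \int \psi^\eps(y)\bigl(m(x-y)-m(x)\bigr)\wedge\bigl(\partial_i m(x-y)-\partial_i m(x)\bigr)\,dy - (m-m_\eps)\wedge(\partial_i m-\partial_i m_\eps)
\end{equation*}
then expresses $I^\eps(\chi)$ as a trilinear expression in $m$ with a total of four derivatives distributed among three factors. H\"older's inequality with appropriate spatial exponents, together with $\|m(\cdot-y)-m\|_{L^p_x}\lesssim |y|^\alpha\omega(y)$ and $\|\nabla m(\cdot-y)-\nabla m\|_{L^p_x}\lesssim |y|^{\alpha-1}\omega(y)$ coming from the $c_0$ Besov hypothesis on $m$ and on $\nabla m$, the Bernstein-type lemma of the Appendix giving $\|\nabla^3 m_\eps\|$ in the conjugate space bounded by $\eps^{\alpha-9/p}$, and $L^3$-Hölder in time, yield
\begin{equation*}
|I^\eps(\chi)|\ \lesssim\ \omega(\eps)\,\eps^{\,2\alpha-1}\cdot\eps^{\,\alpha-9/p}\ =\ \omega(\eps)\,\eps^{\,3\alpha-1-9/p}.
\end{equation*}
At the critical scaling $p = 9/(3\alpha-1)$ the polynomial factor equals $\eps^0$, and the $c_0$-modulus $\omega(\eps)\to 0$ closes the argument.

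The main obstacle is the precise exponent bookkeeping in the last display: the relation $\alpha - 1/3 = 3/p$ is the Onsager-type scaling for this problem, and establishing it rigorously hinges on (a) the time--space Bernstein-type lemma of the Appendix for the spaces $\widetilde{L}^r(0,T;\dot B^\alpha_{p,c_0}(\R^3))$ of Definition~\ref{oula}, which must produce the sharp loss $\eps^{\alpha-9/p}$ for $\nabla^3 m_\eps$ in the correct H\"older-conjugate Lebesgue class, and (b) careful handling of the H\"older pairing across the whole range $\alpha\in(3/2,2)$ (which in particular forces $p$ close to $2$ near $\alpha=2$). The integrations by parts and the algebraic reduction $\mathcal{B}^\eps[m,\Delta m]=\sum_i\partial_i\mathcal{B}^\eps[m,\partial_i m]$ are justified a posteriori by performing all manipulations at the mollified level against $\chi\in C^\infty_{\rm c}$ and passing to the limit at the very end.
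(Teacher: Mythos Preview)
Your treatment of Part~(i) is correct and in fact a cleaner packaging of the paper's computation: the paper takes inner products of the mollified Gilbert equation separately with $\dt m_\eps$, $\Delta m_\eps$ and $H_\eps$ and then forms the combination $\eqref{R1}-2\eqref{R2}-2\eqref{R3}+\eqref{R4}$, which amounts exactly to your single inner product with $F_\eps=\dt m_\eps-2(H_\eps+\Delta m_\eps)$. The pointwise cancellation $(m_\eps\wedge F_\eps)\cdot F_\eps=0$ is the algebraic reason the three separate products telescope, so the two derivations are equivalent.

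For Part~(ii), however, there is a genuine gap in how you dismiss the cross terms. When you split $F$ by linearity, the pieces ``involving $\mathcal{B}^\eps[m,\dt m]$'' and ``involving $\mathcal{B}^\eps[m,H]$'' are each still dotted with the full $F_\eps$, and in particular they contain the mixed contributions
\[
\mathcal{B}^\eps[m,\dt m-2H]\cdot\Delta m_\eps
\qquad\text{and}\qquad
\mathcal{B}^\eps[m,\Delta m]\cdot(\dt m_\eps-2H_\eps),
\]
which are \emph{not} covered by ``standard commutator convergence''. The difficulty is the time integrability: one factor lives in $L^2_{t,x}$, while the two factors carrying Besov regularity are only in $\widetilde{L}^3_t$; the H\"older balance $1/2+1/3+1/r_3=1$ would force $r_3=6$, which is not available from the hypothesis. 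The paper overcomes this (see the Remark in Section~\ref{Va1} and the treatment of the terms $F^\eps_2$, $F^\eps_3$) by interpolating the hypothesis $m\in\widetilde{L}^3_t\dot B^\alpha_{p,c_0}$ with $m\in L^\infty_{t,x}$ to obtain $m\in\widetilde{L}^4_t\dot B^\beta_{q,\infty}$ with $\beta=3\alpha/4$ and $q=12/(4\beta-1)$, and then applying Lemma~\ref{cruciallemma} with the time exponents $(4,2,4)$ and $i=1$. This interpolation step (together with the embedding of Lemma~\ref{bern} to adjust the spatial exponent on the $m$-factor) is an essential ingredient that your sketch omits; without it the cross terms do not close across the full range $\alpha\in(3/2,2)$.

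Your analysis of the worst term $I^\eps(\chi)$ is on the right track and matches the paper's handling of $F^\eps_4$ up to a harmless swap in the H\"older pairing: the paper places $m$ in $L^{\tilde p}$ (via Lemma~\ref{bern} with $\tilde\alpha=4-2\alpha$) and the two copies of $\nabla m$ in $L^p$, whereas you keep $m,\nabla m$ in $L^p$ and put $\nabla^3 m_\eps$ in the conjugate space. Both lead to the same critical balance $\alpha_1+\alpha_2+\alpha_3=2$, but note that your version needs a three-derivative analogue of Lemma~\ref{lowf}, which is not stated in the paper (the paper's Lemma~\ref{cruciallemma} only allows $i\le 2$ on $\phi^3_\eps$, and is applied with $\phi^3=\nabla m$).
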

The first part of the theorem provides a way (actually many, since the choice of the mollifier is arbitrary) to obtain the anomalous dissipation.
The second part provides a sufficient condition on the regularity of the weak solution to guarantee that this anomalous dissipation vanishes.
In this case the local energy identity  \eqref{LLlocalsansa}
     holds true
and the global identity  \eqref{MLLenergy}  as well.

\ \par \

We briefly describe the strategy of the proof of the above theorem. 
Let us consider here, to simplify, the case  of  the Landau-Lifshitz equation \eqref{heu}.
Then the energy identity is formally obtained as follows.  
One  takes the inner product of \eqref{heu} with $\dt m$ and  $\Delta m $ 
to get
\begin{eqnarray}
\label{heuR1}
(\dt m )^{2} &=&  2 (m \wedge \Delta m )  \cdot \dt m ,
 \\ \label{heuR2}
 \dt m \cdot \Delta m  +  (m \wedge \dt m )\cdot \Delta m  &=&   0.
\end{eqnarray}
Observe that the combination $\eqref{heuR1} - 2 \eqref{heuR2}$ yields 
\begin{equation*}
(\dt m )^{2} - 2 \dt m \cdot \Delta m = 0. 
\end{equation*}
Then integrate by parts in space and finally integrate in time 
to obtain the energy identity: for any $T>0$, 
\begin{equation*}
\int_{\R^3}   | \nabla m |^{2} (T,x) dx 
+  \int_{(0,T) \times \R^3} | \partial_{t} m |^{2} \, dx \, dt 
= \int_{\R^3}   | \nabla m |^{2} (0,x) dx  .
\end{equation*}

However some terms involved in this process do not even have a sense 
for weak solutions. In fact, we shall apply a smoothing convolution 
to equation \eqref{heu}, using  the notation $\eps$ in the index 
as in \eqref{notaeps}, and then we shall take the inner product with 
the approximation $\dt m_{\eps}$ and  $\Delta m_{\eps} $.
Still some cancellations, which were trivial in the formal calculations above, 
are not guaranteed anymore, and the point is to be able to get rid of the 
spurious terms. 
For example when we take the inner product of the regularized version of \eqref{heu} with $\Delta m_{\eps} $ we face in particular the expression 
\begin{equation}
\label{termchiant}
\int_{(0,T) \times \R^3} (m \wedge \Delta m )_{\eps}  \cdot \Delta m_{\eps} \, dx \, dt  .
\end{equation}
Actually it appears in the proof below that this term is somehow 
the worse we have to cope with. 
The idea behind Theorem \ref{LLMonsag} is that the regularity assumption 
on $m$ allows to get rid of the term  \eqref{termchiant} when $\eps$ goes 
to $0$. 

A formal argument consists in simply counting that in \eqref{termchiant}, 
there appear four derivatives, in a product of three terms. One can then 
think that the regularity threshold above which integration by parts becomes 
possible is $4/3$. For the Euler equations (where the quantity obtained in 
energy estimates is $((u\cdot\nabla)u)\cdot u$), Onsager's conjecture 
precisely indicated the formal threshold $1/3$, which can be interpreted as the result of one derivative in a product of three terms.
 Here, for the MLL system, 
we are in some sense less able to ``share out'' the derivatives, and 
conclude only for a regularity above $3/2$. 
 
However the couple of exponents $(\alpha,p)$ in Theorem \ref{LLMonsag} 
is critical, in the sense given by Shvydkoy in \cite{romancours}, 
referring to the following dimensional argument. 
Let $M,X,T$ be  respectively some units for magnetic moment, length and time. 
Then the quantity in \eqref{termchiant} has  a dimension equal to 
$X^{-1} \, T M^{3}$.
On the other hand the quantity $\| f_{\alpha,p} [m] (\cdot,y) \|_{L^{r} (0,T)}$ 
from Definition~\ref{oula} has a dimension equal to 
\begin{equation*}
X^{-\alpha}\, \Big( T  (M^{p} X^{3} )^{\frac{r}{p}}  \Big)^{\frac{1}{r}} 
= X^{\frac{3}{p}-\alpha} \,  T^{\frac{1}{r}} \, M .
\end{equation*}
 We would like to control 
the term  \eqref{termchiant} by $ \|  f_{\alpha,p} [m]  (\cdot,y)  \|_{L^{r} (0,T)}^{3}$  
which has a dimension equal to $X^{\frac{9}{p}- 3 \alpha} \, T^{\frac{3}{r}} 
\,  M^{3} $, which provides $r=3$ and  \eqref{ship}.

\par \ \par 
In \cite{BT}  the authors prove that there exist some solutions to the incompressible Euler equations which do not satisfy  \eqref{cbesov}
but  which still preserve the energy. It is quite easy to provide 
a similar result for the MLL equation. 
Indeed, omitting the magnetic field, it is sufficient to consider 
the example which is used in \cite{AS} in order to exhibit a case where 
weak solutions are non unique.

%%%%%%%%%%%%%%%%%%%%%%%
%%%%%%%%%%%%%%%%%%%%%%%
%%%%%%%%%%%%%%%%%%%%%%%
%%%%%%%%%%%%%%%%%%%%%%%
%%%%%%%%%%%%%%%%%%%%%%%
%%%%%%%%%%%%%%%%%%%%%%%

\subsection{Anomalous dissipation for the  HMHD equations}

Let us denote by $K[\cdot] $ the Biot-Savart law in $\R^3$. 
We consider $B$ given by Theorem \ref{ADFL} and $A :=  K[B]$, so that 
$$
A \in  L^\infty (0,T ; \mathcal{V} )\cap L^2 (0,T ; H^2 (\R^3)) , 
\quad \text{and} \quad \curl A = B .
$$
Observe in particular that we consider here the gauge choice $\Div A = 0$.

Our second main result concerns the magneto-helicity conservation 
or dissipation for the  HMHD equations. 
\begin{Theorem}
\label{Hallonsaghel}
Let $(u,B)$ be a solution to the HMHD equations given by Theorem \ref{ADFL}.
Let us denote by $d^\mathfrak{a}_m$ the local magneto-helicity anomalous dissipation:
\begin{gather}
\label{localmagnetohelicity}
d^\mathfrak{a}_m  := \dt  h_m  +  d_m   + \Div f_m      ,
\end{gather}
where 
$(h_m , d_m ,  f_m )$ is given by \eqref{hdfm}.
\begin{enumerate}[i)]
\item  Define 
\begin{equation}
\label{eli}
d_m^{\mathfrak{a},\eps }  := 
2   A_{\eps} \cdot \curl    \mathcal{B}^{\eps} [u,B ]   
-  2   A_{\eps} \cdot  \curl     \Div   \mathcal{C}^{\eps} [B,B  ].  
\end{equation}
Then
\begin{equation*}
 d_m^{\mathfrak{a},\eps}  \rightarrow d_m^{\mathfrak{a}} \text{ in } \mathcal{D}' \big( (0,\infty)\times\R^3 ; \R \big)   \text{ when } \eps \rightarrow  0 ,
\end{equation*}
and this holds true whatever is the mollifier chosen in Section \ref{choice}.
\item Assume furthermore that  $B \in 
\widetilde{L}^{3} (0,T ; \dot{B}^{\frac{1}{3}}_{3,c_0} (\R^3))_{\rm loc}$.
Then the anomalous magneto-helicity dissipation $d^\mathfrak{a}_m $ vanishes.
\end{enumerate}
\end{Theorem}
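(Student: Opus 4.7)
\emph{Overall approach.} The strategy is to mollify \eqref{HMHD3} and carry through the formal derivation of \eqref{mlocalsansa} on the regularized objects. The failure of $\psi^\eps*$ to commute with the quadratic nonlinearities produces exactly the commutators $\mathcal{B}^\eps[u,B]$ and $\mathcal{C}^\eps[B,B]$, whose contribution is $d_m^{\mathfrak{a},\eps}$. Part (i) then amounts to passing to the limit $\eps\to 0$ using the regularity from Theorem~\ref{ADFL}, whereas part (ii) requires estimating the two commutators under the additional Besov hypothesis.

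\emph{Part (i).} Writing the $\psi^\eps$-regularized equation \eqref{HMHD3} as
\[
\dt B_\eps = \curl(u_\eps\wedge B_\eps) - \curl((\curl B_\eps)\wedge B_\eps) + \Delta B_\eps + \curl\mathcal{B}^\eps[u,B] - \curl\Div\mathcal{C}^\eps[B,B],
\]
set $A_\eps := (K[B])_\eps$, so that $\curl A_\eps = B_\eps$ and $\Div A_\eps = 0$, and use the identity $\dt(A_\eps\cdot B_\eps) = 2A_\eps\cdot\dt B_\eps + \Div(A_\eps\wedge\dt A_\eps)$ (which follows from $\Div(A\wedge\dt A) = \dt A\cdot\curl A - A\cdot\curl\dt A$). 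Applying repeatedly the vector identity $A_\eps\cdot\curl X = \Div(X\wedge A_\eps) + X\cdot B_\eps$, together with the antisymmetries $(u_\eps\wedge B_\eps)\cdot B_\eps = 0$ and $((\curl B_\eps)\wedge B_\eps)\cdot B_\eps = 0$, the smooth part of $2A_\eps\cdot\dt B_\eps$ collapses to $-d_{m,\eps}$ plus exact divergences which, combined with $\Div(A_\eps\wedge\dt A_\eps)$, assemble into $\Div f_{m,\eps}$. The net remainder is $d_m^{\mathfrak{a},\eps}$. For the limit $\eps\to 0$, standard mollifier properties together with $u,B\in L^\infty(0,T;\mathcal{H})\cap L^2(0,T;\mathcal{V})$ and $A\in L^\infty(0,T;\mathcal{V})\cap L^2(0,T;H^2)$ (via Biot--Savart) give $h_{m,\eps}\to h_m$, $d_{m,\eps}\to d_m$ and $f_{m,\eps}\to f_m$ in $\mathcal{D}'$; hence $d_m^{\mathfrak{a},\eps}\to d_m^{\mathfrak{a}}$ in $\mathcal{D}'$, with a limit independent of $\psi$.

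\emph{Part (ii).} Testing $d_m^{\mathfrak{a},\eps}$ against $\chi\in C^\infty_{\rm c}((0,T)\times\R^3)$ and integrating by parts via $\curl(\chi A_\eps) = \nabla\chi\wedge A_\eps + \chi B_\eps$, one gets
\[
\langle d_m^{\mathfrak{a},\eps},\chi\rangle = 2\int\chi\, B_\eps\cdot\mathcal{B}^\eps[u,B]\,dx\,dt - 2\int\chi\,\nabla B_\eps:\mathcal{C}^\eps[B,B]\,dx\,dt + R_\eps(\chi),
\]
where $R_\eps(\chi)$ collects all contributions carrying a factor $\nabla\chi$ and tends to $0$ because $\mathcal{B}^\eps[u,B]$ and $\mathcal{C}^\eps[B,B]$ converge to $0$ in $L^1_{\rm loc}$ by the classical commutator lemma. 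The critical piece is the second integral; the Constantin--E--Titi decomposition
\[
\mathcal{C}^\eps[B,B](x) = \int\psi^\eps(y)\,\delta_y B(x)\otimes\delta_y B(x)\,dy - \delta_\eps B(x)\otimes\delta_\eps B(x),\qquad \delta_y B(x):=B(x-y)-B(x),
\]
combined with $\nabla B_\eps(x) = \int\nabla\psi^\eps(y)\,\delta_y B(x)\,dy$ and H\"older in space and time, yields
\[
\Bigl|\int\chi\,\nabla B_\eps:\mathcal{C}^\eps[B,B]\,dx\,dt\Bigr|\lesssim\|\chi\|_{L^\infty}\sup_{|y|\le C\eps}\|f_{1/3,3}[B](\cdot,y)\|_{L^3(0,T)}^3,
\]
which vanishes as $\eps\to 0$ under the $c_0$-hypothesis. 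The mixed commutator $\int\chi B_\eps\cdot\mathcal{B}^\eps[u,B]$ is treated in the same spirit, the extra regularity $u\in L^2(0,T;\mathcal{V})$ readily providing a strictly positive power of $\eps$ and hence its own vanishing.

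\emph{Main obstacle.} At the critical exponents $\alpha=1/3$, $p=3$, the individual estimates $\|\nabla B_\eps(t,\cdot)\|_{L^3}\lesssim\eps^{-2/3}$ and $\|\mathcal{C}^\eps[B,B](t,\cdot)\|_{L^{3/2}}\lesssim\eps^{2/3}$ only yield an $O(1)$ bound on the product, not an $o(1)$: what promotes this $O(1)$ to $o(1)$ is precisely the $c_0$-refinement in the definition of $\widetilde L^3(0,T;\dot B^{1/3}_{3,c_0})$, which forces the small-scale difference quotients of $B$ to vanish in $L^3_t$. This critical balance is the magneto-helicity analogue of Onsager's $1/3$ threshold, and the principal technical care is to arrange all integrations by parts and localizations (via $\chi$) so that this sharp scaling is preserved, while handling the vector-potential factor $A_\eps$ through the bounded Biot--Savart operator rather than through any direct estimate on $A$.
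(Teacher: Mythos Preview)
Your proof is correct and follows essentially the same route as the paper: regularize \eqref{HMHD3}, derive the $\eps$-version of \eqref{mlocalsansa} with remainder $d_m^{\mathfrak{a},\eps}$, integrate by parts against $\chi$, and handle the resulting trilinear pieces with the Constantin--E--Titi-type commutator lemma (the paper's Lemma~\ref{cruciallemma}), the critical term being exactly $\int\chi\,\nabla B_\eps:\mathcal{C}^\eps[B,B]$ at $i=1$. The one point the paper treats more explicitly than your sketch is the convergence of the flux term containing $\partial_t A_\eps$ in Part~(i): since $\partial_t A$ is not \emph{a priori} in any Lebesgue space, the paper reads off from \eqref{0HMHD3} that $\partial_t A\in L^{4/3}(0,T;\mathcal{V}')$, which paired with $A\in L^\infty(0,T;\mathcal{V})$ is what makes $(\partial_t A_\eps)\wedge A_\eps\to(\partial_t A)\wedge A$ go through; also, for the mixed term $\int\chi B_\eps\cdot\mathcal{B}^\eps[u,B]$ the paper simply invokes the $i=0$ case of Lemma~\ref{cruciallemma} with $u,B\in L^3_{t,x}$ (from interpolation of the energy bounds), which is cleaner than extracting a power of $\eps$ and shows this term vanishes already under the sole hypotheses of Theorem~\ref{ADFL}.
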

\begin{Remark}
The proof of part ii) below will show that the first term in the definition \eqref{eli} of $d_m^{\mathfrak{a},\eps}$ converges to $0$ when $\eps \rightarrow 0^+$ in the sense of distributions under the sole assumption that $(u,B)$ is given by Theorem \ref{ADFL}. 
\end{Remark}
Investigation of  the validity of the fluid helicity identity 
\eqref{fluideHeli} and of the total  fluid-magneto-helicity 
\eqref{miHeli} is very similar and is left aside in this paper. 
Regarding energy, we have the following result.
\begin{Theorem}
\label{Hallonsagen}
Let $(u,B)$ be a solution to the HMHD equations given by Theorem \ref{ADFL}, 
and assume that $B \in L^{4} ((0,T) \times \R^{3} )_{\rm loc}$. 
Let us denote by $d^\mathfrak{a}_{\rm HMHD} $ the local energy anomalous dissipation:
\begin{equation}
\label{HMHDlocalEps}
d^\mathfrak{a}_{\rm HMHD} :=
\dt e_{\rm HMHD} + 
d_{\rm HMHD}  + 
\Div f_{\rm HMHD}    ,
\end{equation}
where $(e_{\rm HMHD} , d_{\rm HMHD}  ,  f_{\rm HMHD}  )$ is given by \eqref{edfHMHD}.

\begin{enumerate}[i)]
\item Let
\begin{eqnarray*}
 d^{\mathfrak{a},\eps}_{\rm HMHD}  &:=& 
- u_\eps \cdot   \Div  \Big( \mathcal{C}^{\eps} [u,u ] 
- \mathcal{C}^{\eps} [B,B  ]     \Big)  
-   \frac{1}{2} u_\eps \cdot  \nabla \mathcal{A}^{\eps} [B, B ]
\\ &&- B_{\eps} \cdot \Big(  \curl   \mathcal{B}^{\eps} [u,B ] \Big)
+ B_{\eps} \cdot  \Big(  \curl   \Div \mathcal{C}^{\eps} [B,B  ]   \Big)  .
\end{eqnarray*}
Then
\begin{equation*}
  d^{\mathfrak{a},\eps}_{\rm HMHD}    \rightarrow d^\mathfrak{a}_{\rm HMHD}   \text{ in } \mathcal{D}' \big( (0,\infty)\times\R^3 ; \R \big)   \text{ when } \eps \rightarrow  0 ,
\end{equation*}
and this holds true whatever is the mollifier chosen in Section \ref{choice}.
\item Assume furthermore that  
$$u \in  
\widetilde{L}^{3} (0,T ; \dot{B}^{\frac{1}{3}}_{3,c_0} (\R^3))_{\rm loc}
\text{ and }
B \in 
\widetilde{L}^{3} (0,T ; \dot{B}^{\frac{2}{3}}_{3,c_0} (\R^3))_{\rm loc}.
$$
Then the anomalous energy dissipation $d^\mathfrak{a}_{\rm HMHD} $ vanishes.
\end{enumerate}
\end{Theorem}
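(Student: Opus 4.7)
My plan is to convolve the conservative equations \eqref{HMHD1} and \eqref{HMHD3} with $\psi^\eps$, which produces
\begin{align*}
\partial_t u_\eps + \Div(u_\eps\otimes u_\eps) - \Div(B_\eps\otimes B_\eps) + \nabla \tilde{p}_\eps &= \Delta u_\eps - \Div\bigl(\mathcal{C}^\eps[u,u]-\mathcal{C}^\eps[B,B]\bigr) - \tfrac12\nabla\mathcal{A}^\eps[B,B], \\
\partial_t B_\eps - \curl(u_\eps\wedge B_\eps) + \curl\bigl((\curl B_\eps)\wedge B_\eps\bigr) &= \Delta B_\eps + \curl\mathcal{B}^\eps[u,B] - \curl\Div\mathcal{C}^\eps[B,B],
\end{align*}
where $\tilde{p}_\eps := p_\eps + \tfrac12|B_\eps|^2$ absorbs the mollified magnetic pressure via the identity $\Div(B\otimes B) = (\curl B)\wedge B + \tfrac12\nabla|B|^2$. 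Taking the $L^2$ inner products with $u_\eps$ and $B_\eps$ respectively and summing, the left-hand side reorganizes (using $\Div u_\eps=\Div B_\eps=0$) into a local energy balance $\partial_t e^\eps_{\rm HMHD} + d^\eps_{\rm HMHD} + \Div f^\eps_{\rm HMHD}$ built from the mollified fields, with right-hand side exactly $d^{\mathfrak{a},\eps}_{\rm HMHD}$. Part (i) then follows because the regularity $(u,B)\in L^\infty_tL^2_x\cap L^2_t\mathcal{V}$ from Theorem \ref{ADFL}, together with the local $L^4$ assumption on $B$ (needed for the cubic term $B\wedge(u\wedge B)$ in $f_{\rm HMHD}$ and for the Hall commutator), ensures $\mathcal{D}'((0,T)\times\R^3)$-convergence of each mollified quantity to its unmollified counterpart, independently of the mollifier.

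For part (ii), I would test each of the four pieces of $d^{\mathfrak{a},\eps}_{\rm HMHD}$ against an arbitrary $\phi\in C^\infty_{\rm c}((0,T)\times\R^3)$ and prove that the pairing tends to zero. The toolbox consists of: (a) the Constantin-E-Titi commutator identity yielding
\[
\|\mathcal{X}^\eps[f,g]\|_{L^p}\lesssim \eps^{2\alpha}\,\|f\|_{\dot{B}^{\alpha}_{2p,\infty}}\,\|g\|_{\dot{B}^{\alpha}_{2p,\infty}},\qquad \mathcal{X}\in\{\mathcal{A},\mathcal{B},\mathcal{C}\};
\]
(b) sharp Bernstein-type estimates $\|\nabla^k f_\eps\|_{L^p}\lesssim \eps^{\alpha-k}\|f\|_{\dot{B}^\alpha_{p,\infty}}$, which rely on the vanishing moments $\int\nabla^k\psi^\eps\,dy=0$ combined with a difference-quotient estimate; and (c) the space-time Bernstein lemma of the Appendix, which upgrades the borderline scalings to actual convergence to zero under the stronger $c_0$ condition. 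After appropriate integrations by parts placing derivatives either on the mollified fields or on $\phi$, Hölder in $L^3\cdot L^{3/2}$ reduces each contribution to a power of $\eps$ times a product of three Besov norms.

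The pure Onsager piece $-u_\eps\cdot\Div\mathcal{C}^\eps[u,u]$ is bounded by $\eps^{3\alpha_u-1}\|u\|^3_{\dot B^{\alpha_u}_{3,\infty}}$ and vanishes already for $\alpha_u > 1/3$. The three mixed terms $-u_\eps\cdot\Div\mathcal{C}^\eps[B,B]$, $-B_\eps\cdot\curl\mathcal{B}^\eps[u,B]$ and (using the welcome cancellation $\Div u_\eps=0$ after integrating by parts) $-\tfrac12 u_\eps\cdot\nabla\mathcal{A}^\eps[B,B]$ distribute a single derivative over one copy of $u$ and two of $B$, producing at worst $\eps^{\alpha_u+2\alpha_B-1}$, which is already nonnegative under our hypotheses. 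The genuinely critical contribution, and the main obstacle, is the Hall piece $B_\eps\cdot\curl\Div\mathcal{C}^\eps[B,B]$: two integrations by parts move the curl-div onto $\phi B_\eps$, and the leading term is
\[
\|\nabla^2 B_\eps\|_{L^3}\,\|\mathcal{C}^\eps[B,B]\|_{L^{3/2}}\lesssim \eps^{\alpha_B-2}\cdot\eps^{2\alpha_B}\,\|B\|^3_{\dot{B}^{\alpha_B}_{3,\infty}} = \eps^{3\alpha_B-2}\,\|B\|^3_{\dot{B}^{\alpha_B}_{3,\infty}},
\]
which only vanishes when $\alpha_B>2/3$, the critical case $\alpha_B=2/3$ being rescued by the $\dot B^{2/3}_{3,c_0}$ assumption via the Appendix lemma. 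This two-derivative loss, intrinsic to the curl-div structure of the Hall nonlinearity, is exactly what doubles the Besov exponent required on $B$ relative to Onsager's $1/3$ for $u$, and accounts for the asymmetric thresholds in the statement.
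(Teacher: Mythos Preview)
Your approach is correct and essentially identical to the paper's: derive a regularized local energy identity $\partial_t e^\eps_{\rm HMHD} + d^\eps_{\rm HMHD} + \Div f^\eps_{\rm HMHD} = d^{\mathfrak{a},\eps}_{\rm HMHD}$ by multiplying the mollified conservative equations by $u_\eps$, $B_\eps$ and summing, then pass to the limit termwise for Part~(i); for Part~(ii) integrate by parts against a test function and apply the Constantin--E--Titi commutator estimates, identifying the Hall term $B_\eps\cdot\curl\Div\mathcal{C}^\eps[B,B]$ as the one forcing two derivatives onto $B_\eps$ and hence the threshold $\alpha_B=2/3$.

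Two minor corrections. First, the local $L^4$ hypothesis on $B$ is not needed for $B\wedge(u\wedge B)$ (that term is cubic with no derivative and is handled by $u,B\in L^3_{t,x}$ from interpolation); it is needed for the Hall flux term $\bigl((\curl B)\wedge B\bigr)\wedge B$ in $f_{\rm HMHD}$, so that $\curl B\in L^2_{t,x}$ paired with $B\otimes B\in L^2_{\rm loc}$ gives $L^1_{\rm loc}$. Second, in Part~(i) you should not omit the pressure: the flux contains $p\,u$, and to pass $p_\eps u_\eps\to p\,u$ in $L^1_{\rm loc}$ one needs $p\in L^{3/2}_{\rm loc}$, which the paper obtains from the elliptic equation $\Delta p_m = -\Div\Div(u\otimes u - B\otimes B)$ and Calder\'on--Zygmund regularity.
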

%

%%%%%%%%%%%%%%%%%%%%%%%
%%%%%%%%%%%%%%%%%%%%%%%
%%%%%%%%%%%%%%%%%%%%%%%
%%%%%%%%%%%%%%%%%%%%%%%
%%%%%%%%%%%%%%%%%%%%%%%
%%%%%%%%%%%%%%%%%%%%%%%

\subsection{A comparison with the MHD equations}
It is interesting to compare the results 
for the  HMHD equations with the case of the 
Magneto-Hydrodynamic system  without Hall effect.
Recasting \eqref{0MHD1} under the conservative form 
\begin{equation}
\label{MHD1}
\dt u +   \Div (u \otimes u - B\otimes B ) + \nabla p_m = \Delta u ,
\end{equation}
we can define the notion of weak solution. 
Actually this was observed a long time ago by Duvaut and Lions who proved 
the following theorem (see \cite{DL}).
\begin{Theorem}[Duvaut-Lions] 
\label{DL}
Let $u_0$ and $B_0$ be in $\mathcal{H}$ and $T>0$. 
Then there exists a weak solution 
\begin{equation*}
(u,B)  \in \Big( L^\infty (0,T ; \mathcal{H} )\cap L^2 (0,T ; \mathcal{V} ) \Big)^2 ,
\end{equation*}
for the MHD model \eqref{MHD1}-\eqref{MHD2}-\eqref{MHD3}-\eqref{MHD4} corresponding to these initial data.
Moreover, this solution satisfies \eqref{nrjHALL}.
\end{Theorem}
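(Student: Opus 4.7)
The plan is to follow the classical Leray-Hopf strategy, adapted to the coupled system. First I would build approximate solutions by Galerkin projection onto a sequence of finite-dimensional subspaces $V_n \subset \mathcal{V}$ (for instance using an orthonormal Hilbert basis of $\mathcal{H}$ consisting of eigenvectors of the Stokes operator, which belong to $\mathcal{V}$). Denoting by $P_n$ the associated orthogonal projection, I would seek $(u_n,B_n) \in C^1([0,T_n); V_n \times V_n)$ solving
\begin{equation*}
\dt u_n + P_n \Div (u_n \otimes u_n - B_n \otimes B_n) = \Delta u_n , \qquad
\dt B_n - P_n \curl (u_n \wedge B_n) = \Delta B_n ,
\end{equation*}
with initial data $(P_n u_0, P_n B_0)$. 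This is a finite-dimensional locally Lipschitz ODE, hence locally well-posed by Cauchy-Lipschitz.

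Second, I would establish the fundamental energy estimate. Testing the first equation against $u_n$ and the second against $B_n$, exploiting the algebraic cancellation
\begin{equation*}
\int_{\R^3} \bigl( (\curl B_n)\wedge B_n \bigr)\cdot u_n \, dx + \int_{\R^3} \curl(u_n\wedge B_n)\cdot B_n \, dx = 0 ,
\end{equation*}
and discarding the pressure (since $\Div u_n = 0$), one obtains exactly the identity \eqref{nrjHALL} at the Galerkin level, with equality replacing inequality. This provides uniform bounds
\begin{equation*}
(u_n, B_n) \text{ bounded in } \bigl( L^\infty(0,T;\mathcal{H}) \cap L^2(0,T;\mathcal{V}) \bigr)^2 ,
\end{equation*}
which in particular rules out blow-up and yields $T_n = +\infty$.

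Third, the main obstacle, I would pass to the limit in the quadratic nonlinearities $u_n \otimes u_n$ and $u_n \wedge B_n$. Weak compactness alone does not suffice; strong compactness in $L^2_{\rm loc}((0,T)\times\R^3)$ is needed. For this I would use the Aubin-Lions lemma: from the equations, $(\dt u_n, \dt B_n)$ is bounded in $L^{4/3}(0,T;H^{-s}(\R^3))$ for some $s>0$ (the worst term being $\Div(u_n\otimes u_n)$, controlled through Sobolev embedding applied to the bounds above). Combined with the spatial bound in $L^2(0,T;\mathcal{V})$, this produces, after extracting a subsequence and localising in space with a cutoff, strong convergence $(u_n,B_n)\to (u,B)$ in $L^2_{\rm loc}$, hence convergence in the sense of distributions of the quadratic terms. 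Weak lower semi-continuity of the norms $\|u\|_{L^\infty_t L^2}$, $\|B\|_{L^\infty_t L^2}$, $\|\curl u\|_{L^2_{t,x}}$, $\|\curl B\|_{L^2_{t,x}}$ under these convergences, together with the equality at the Galerkin level, yields the energy inequality \eqref{nrjHALL}. The pressure $p_m$ is finally recovered by solving a Poisson equation (or equivalently by applying Leray's projector), and the initial data are attained in the sense $(u_n,B_n) \rightharpoonup (u,B)$ in $C_w([0,T];\mathcal{H})$, completing the construction.
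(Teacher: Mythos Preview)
The paper does not give its own proof of this theorem: it is quoted as a classical result of Duvaut and Lions, with a reference to \cite{DL}, and no argument is supplied. Your sketch is essentially the standard Leray--Hopf/Galerkin proof that one finds in that reference (and in the later adaptation \cite{HMHD,CDL} for the Hall variant), so there is nothing to compare at the level of strategy.

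One technical point worth fixing: you propose a Galerkin basis made of eigenvectors of the Stokes operator, but on the whole space $\R^3$ the Stokes operator has purely continuous spectrum and no $L^2$ eigenfunctions, so no such basis exists. You should replace this either by Fourier spectral truncation (project onto divergence-free fields with Fourier support in a ball of radius $n$), which is what is typically done in $\R^3$ and makes the energy cancellations transparent, or by a mollification scheme of the type the paper itself uses for the HMHD system in \eqref{HMHD-suit1}--\eqref{HMHD-suit4}. Apart from this, the remaining steps --- the energy identity at the approximate level via the cancellation you wrote, uniform bounds in $L^\infty_t\mathcal{H}\cap L^2_t\mathcal{V}$, a temporal bound on $(\partial_t u_n,\partial_t B_n)$ in a negative Sobolev space, Aubin--Lions compactness to pass to the limit in the quadratic terms, and weak lower semicontinuity for the energy inequality --- are correct and constitute the classical argument.
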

Now following the proofs of Theorem \ref{Hallonsaghel} 
and Theorem \ref{Hallonsagen} we also have the following result 
about the conservation of energy,  of magneto-helicity and of crossed fluid-magneto-helicity.
\begin{Theorem}
\label{MHDonsag}
Let $(u,B)$ be a solution to \eqref {MHD1} given by Theorem \ref{DL}.
\begin{enumerate}[i)]
\item  Then  the  local  magneto-helicity identity \eqref{mlocalsansaMHD} is valid.
\item  Assume furthermore that 
\begin{itemize}
\item $u$ is in $\widetilde{L}^{3} (0,T ; \dot{B}^{\alpha}_{3,\infty} (\R^3 ))_{\rm loc}$ with  $\alpha \in (0,1)$ or $u$ is in $L^{3} ((0,T) \times \R^3 )_{\rm loc}$, and then we set $\alpha =0$,
\item $B$ is in $\widetilde{L}^{3} (0,T ; \dot{B}^{\beta}_{3,\infty} (\R^3 ))_{\rm loc}$ with  $\beta \in (0,1)$,
\end{itemize}
and at least one the three following properties holds true:
\begin{itemize}
\item $\alpha=0$, 
\item $\alpha\in(0,1)$ and 
$u$ is in $\widetilde{L}^{3} (0,T ; \dot{B}^{\alpha}_{3,c_{0}} (\R^3 ))_{\rm loc}$, 
\item $B$ is in $\widetilde{L}^{3}   (0,T ; \dot{B}^{\beta}_{3,c_{0}} (\R^3 ))_{\rm loc}$.
\end{itemize}
Finally assume that  $\alpha + 2 \beta \geq 1$.
Then  the  local energy estimate    \eqref{MHDlocalsansa} holds true.
\item Let us consider again $(u,B)$  a solution to \eqref {MHD1} given by Theorem \ref{DL}.
Assume furthermore that 
\begin{itemize}
\item $u$ is in $\widetilde{L}^{3} (0,T ; \dot{B}^{\alpha}_{3,\infty} (\R^3 ))_{\rm loc}$ with  $\alpha \in (0,1)$,
\item $B$ is in $\widetilde{L}^{3} (0,T ; \dot{B}^{\beta}_{3,\infty} (\R^3 ))_{\rm loc}$ with  $\beta \in (0,1)$,
\end{itemize}
and at least one the two following properties holds true:
\begin{itemize}
\item $u$ is in $\widetilde{L}^{3} (0,T ; \dot{B}^{\alpha}_{3,c_{0}} (\R^3 ))_{\rm loc}$, 
\item $B$ is in $\widetilde{L}^{3} (0,T; \dot{B}^{\beta}_{3,c_{0}} (\R^3 ))_{\rm loc}$.
\end{itemize}
Finally assume that  $2 \alpha +  \beta \geq 1$ and $3 \beta \geq 1$.
Then  the local crossed  fluid-magneto-helicity identity  \eqref{localfm} holds true.
\end{enumerate}
\end{Theorem}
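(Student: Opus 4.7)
The strategy is to follow the mollification scheme already laid out for Theorems~\ref{Hallonsaghel} and~\ref{Hallonsagen}, but applied to the MHD system~\eqref{MHD1}--\eqref{MHD4} (i.e.\ with the Hall contribution absent). The three identities will be obtained by writing the mollified equation for $B$ (respectively the mollified momentum and induction equations) and testing against an appropriate regularized test field ($A_\eps$ for magneto-helicity, $u_\eps$ and $B_\eps$ for energy, $B_\eps$ and $u_\eps$ for crossed helicity). Collecting terms, the only objects preventing the passage to the limit $\eps \to 0$ are the quadratic commutators $\mathcal{A}^\eps, \mathcal{B}^\eps, \mathcal{C}^\eps$ of Section~\ref{choice}; the thrust of the proof is to show that each such commutator, once paired with the appropriate factor and possibly a derivative, vanishes in $\mathcal{D}'$.

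\textbf{Part i).} After convolution, the only commutator entering the local magneto-helicity balance is $A_\eps \cdot \curl \mathcal{B}^\eps[u,B]$; the Hall commutator $A_\eps \cdot \curl \Div \mathcal{C}^\eps[B,B]$ present in Theorem~\ref{Hallonsaghel} is simply absent here. By the natural bounds from Theorem~\ref{DL}, $u,B \in L^\infty(0,T;L^2)\cap L^2(0,T;L^6)$ (via $\mathcal{V}\hookrightarrow L^6$), so both fields lie in $L^4_{t,x}$ locally; together with $A \in L^\infty_t L^6_x \cap L^2_t W^{1,6}_x$ (from elliptic regularity for $\curl A=B$, $\Div A=0$) a standard Constantin--E--Titi-type commutator bound (the Remark after Theorem~\ref{Hallonsaghel} already flags this) gives $\mathcal{B}^\eps[u,B]\to 0$ in, say, $L^{4/3}_{\mathrm{loc}}$, which is more than enough to pass to the limit in the distribution sense. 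No additional Besov hypothesis is required, and \eqref{mlocalsansaMHD} follows.

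\textbf{Part ii).} Writing out the local energy balance after mollification, three commutator terms survive:
\begin{equation*}
u_\eps\cdot\Div\mathcal{C}^\eps[u,u], \qquad
u_\eps\cdot\Div\mathcal{C}^\eps[B,B] + \tfrac12 u_\eps\cdot\nabla\mathcal{A}^\eps[B,B], \qquad
B_\eps\cdot\curl\mathcal{B}^\eps[u,B].
\end{equation*}
The first is the classical Euler commutator, cubic in $u$, and will be handled by a Constantin--E--Titi estimate requiring $3\alpha\ge 1$ if one wants to stay in Besov framework, but the parabolic regularity $u\in L^2(0,T;\mathcal{V})$ already provides enough smoothness to treat it: indeed, $L^\infty L^2 \cap L^2 H^1$ embeds into $\widetilde{L}^3(0,T;\dot B^{1/3}_{3,2})_{\mathrm{loc}}$, which is why no lower bound on $\alpha$ enters the statement. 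The two mixed commutators are each trilinear with one $u$ and two $B$'s appearing in a product carrying one spatial derivative; the Besov dimensional count of Theorem~\ref{LLMonsag} adapts verbatim to give the scaling threshold $\alpha + 2\beta \ge 1$. The vanishing in $\mathcal D'$ is ensured under the equality $\alpha+2\beta=1$ by the assumption that at least one of $u$ or $B$ lies in the ``$c_0$'' version of the Besov space (strict inequality reduces to the inequality case by embedding). The Bernstein-type lemma in the Appendix is what makes these estimates valid in the Chemin--Lerner space $\widetilde L^3$ rather than the stronger $L^3 \dot B^\alpha_{3,\infty}$.

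\textbf{Part iii).} From the local identity~\eqref{localfm}, the commutators appearing upon mollification of $h_{fm}=u\cdot B$ are (schematically) $B_\eps\cdot\Div\mathcal{C}^\eps[u,u]$, $B_\eps\cdot\Div\mathcal{C}^\eps[B,B]$, $u_\eps\cdot\curl\mathcal{B}^\eps[u,B]$ and a contribution from $\mathcal{A}^\eps[B,B]$. Counting derivatives and fields, two of these are quadratic in $u$ and linear in $B$ with one derivative (giving $2\alpha+\beta\ge 1$), while the one coming from $\Div\mathcal{C}^\eps[B,B]$ tested against $B_\eps$ is cubic in $B$ with one derivative, producing the threshold $3\beta\ge 1$. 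Under the stated assumptions and the usual $c_0$ hypothesis on at least one of the two factors, the Chemin--Lerner version of the Constantin--E--Titi commutator lemma, together with the Bernstein-type lemma of the Appendix, yields the vanishing of each commutator in $\mathcal D'$.

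\textbf{Main obstacle.} The substantive work is entirely in the commutator estimates: bounding trilinear quantities of the form $\int \varphi_\eps\cdot \mathcal{C}^\eps[\phi^1,\phi^2]$ by $\|\varphi\|_{\widetilde L^3 \dot B^{\gamma}_{3,\infty}}\,\|\phi^1\|\,\|\phi^2\|$ with the correct exponents, and showing the resulting quantity vanishes when one of the factors lies in the corresponding $c_0$ space. Once those estimates are in hand (they are essentially the ones already used for Theorems~\ref{LLMonsag}--\ref{Hallonsaghel}--\ref{Hallonsagen}, transposed to the $\widetilde L^r$ framework thanks to the Appendix), each item of Theorem~\ref{MHDonsag} follows by simply collecting terms. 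The remaining verification that the mollified energy, magneto-helicity and crossed-helicity densities converge to the correct distributional limits uses only the baseline integrability $(u,B)\in (L^\infty L^2\cap L^2\mathcal V)^2$ and is routine.
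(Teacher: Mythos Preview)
Your overall plan is the paper's: Parts i) and ii) are proved in the paper merely by reference to the HMHD arguments (Theorems~\ref{Hallonsaghel} and~\ref{Hallonsagen}), dropping the Hall commutator, and Part iii) is treated in Section~\ref{pom} precisely as you describe---test the mollified momentum equation against $B_\eps$ and the mollified induction equation against $u_\eps$, sum, identify
\[
d^{\mathfrak{a},\eps}_{fm} = -B_\eps \cdot \Div\big(\mathcal{C}^\eps[u,u]-\mathcal{C}^\eps[B,B]\big) - \tfrac12\, B_\eps \cdot \nabla \mathcal{A}^\eps[B,B] + u_\eps \cdot \curl \mathcal{B}^\eps[u,B],
\]
and dispatch each piece with Lemma~\ref{cruciallemma}, which produces exactly the thresholds $2\alpha+\beta\ge 1$ and $3\beta\ge 1$ you record. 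Your Part i) is also fine (the paper uses $L^3_{t,x}$ rather than $L^4_{t,x}$, but either works for Lemma~\ref{cruciallemma} with $i=0$).

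There is, however, a genuine error in your treatment of Part ii). You assert that $L^\infty(0,T;L^2)\cap L^2(0,T;H^1)$ embeds into $\widetilde{L}^3(0,T;\dot B^{1/3}_{3,2})_{\rm loc}$, and invoke this to dispose of the cubic-in-$u$ commutator $u_\eps\cdot\Div\,\mathcal{C}^\eps[u,u]$ without any hypothesis on $\alpha$. That embedding is false. Under the parabolic scaling $u\mapsto \lambda u(\lambda^2 t,\lambda x)$ the Leray energy class scales like $\lambda^{-1/2}$ while $\widetilde{L}^3\dot B^{1/3}_{3,q}$ scales like $\lambda^{-1/3}$, so no continuous embedding can hold; equivalently, interpolating the Littlewood--Paley bounds $\|\dot\Delta_j u\|_{L^\infty_t L^2_x}\lesssim 1$ and $(2^j\|\dot\Delta_j u\|_{L^2_t L^2_x})_j\in\ell^2$ and applying Bernstein reaches at best $\widetilde{L}^3\dot B^{1/6}_{3,\infty}$, not regularity $1/3$. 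Were your embedding true, Lemma~\ref{cruciallemma} would yield the local energy equality for \emph{every} Leray--Hopf solution of the three-dimensional Navier--Stokes equations, a well-known open problem. Hence your explanation for ``why no lower bound on $\alpha$ enters the statement'' does not stand, and the $u$-cubic commutator is not handled by the baseline regularity alone through Lemma~\ref{cruciallemma}.
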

Therefore the Hall effect does not modify formally the laws 
of conservation of energy and magneto-helicity but it could be, 
in view of Theorem~\ref{Hallonsaghel}, 
Theorem~\ref{Hallonsagen} and Theorem~\ref{MHDonsag}, 
that it creates some extra anomalous dissipation in these laws 
for solutions which have only a quite bad regularity.

Observe that Theorem \ref{MHDonsag} extends to the case of the viscous resistive MHD some earlier results by \cite{CKS} about the ideal MHD.

\begin{Remark}
We do not reproduce here the dimensional argument 
(given after Theorem~\ref{LLMonsag}) for the HMHD and MHD equations 
but one can check that the conditions given in Theorem~\ref{Hallonsaghel}, 
Theorem~\ref{Hallonsagen} and Theorem~\ref{MHDonsag} are critical 
in the sense of this dimensional analysis.
\end{Remark}
The proof of Part i) and Part ii) of Theorem~\ref{MHDonsag} 
is left to the reader since it can be proved 
along the same lines as the proofs of Theorem~\ref{Hallonsaghel} 
and Theorem \ref{Hallonsagen}. The proof of the Part iii) is 
tackled in Section \ref{pom}.

\subsection{Suitable weak solutions}

Observe that despite we use the word ``dissipation'' in the statements 
above we do not claim anything about the sign of $e^\mathfrak{a}_{\rm MLL} $ 
or $e^\mathfrak{a}_{\rm HMHD} $.
This terminology would be particularly appropriate if the distributions $e^\mathfrak{a}_{\rm MLL} $ 
or $e^\mathfrak{a}_{\rm HMHD}$ were non positive.

 The corresponding feature for the Navier-Stokes equations has been quite useful in order to obtain partial regularity theorems limiting the parabolic
Hausdorff dimension of the singular set, see  \cite{CKN}.
In particular, in this context, the term  ``suitable"  has been coined for weak solutions that have a non positive anomalous dissipation.
Strikingly enough the approximation process used by Leray in order to establish 
the existence of weak solutions  actually leads to suitable weak solutions, 
see \cite{raoul}. 
In Leray's scheme, the approximate equations read:
\begin{eqnarray}
\label{suit1}
\dt u^{\eps} +   (u^{\eps})_{\eps} \cdot \nabla u^{\eps} + \nabla p^{\eps} &=&  \Delta u^{\eps} ,
\\ \label{suit2}
\Div u^{\eps} &=& 0 .
\end{eqnarray}
One may also argue that an appropriate sign condition on the anomalous dissipation 
could be helpful to select 
among weak solutions, which ones may be considered physically acceptable, 
as one might 
think that the lack of smoothness could lead to local energy creation.
Indeed in the case of the inviscid Burgers equation in one space dimension the requirement to be suitable coincides
with the usual entropy condition of negative jumps, which does imply uniqueness.
However such a result has not been proved yet for the Navier-Stokes equations, 
up to our knowledge, and the case of the HMHD equations could be even more difficult. 
In the case of the MLL equations, such a result is even more desirable since Alouges 
and Soyeur have proved in \cite{AS} non-uniqueness of weak solutions to the  
Landau-Lifshitz equation.

In this  section, we investigate what can be said about the sign of  
the anomalous energy dissipations $d^\mathfrak{a}_{\rm MLL} $ and  
$d^\mathfrak{a}_{\rm HMHD} $ for some rather standard processes used in order to 
prove the existence of weak solutions to  the MLL equations and of the HMHD equations.

\paragraph{Case of the MLL equations.}

One difficulty in establishing the existence of weak solutions to the MLL equations as claimed in Theorem \ref{CF} is due to the condition 
$|m| = 1$ almost everywhere. 
A brutal application of the usual strategy of mollification of the equation fails to capture this constraint. 
A by-now usual way to overcome this difficulty consists in using a Ginzburg-Landau type penalization, following the analysis performed in  \cite{AS} for the Landau-Lifshitz equation and  \cite{CF98} for the MLL equations. Here we will consider, for $\eps\in(0,1)$,  the penalized equations:
\begin{eqnarray}
\label{LLM-suit1}
\dt m^{\eps} -  m^{\eps} \wedge \dt m^{\eps} &=&
2 \Big( \Delta m^{\eps} + H^{\eps}- (H^{\eps} \cdot m^{\eps} ) m^{\eps} - \frac{1}{\eps} (| m^{\eps}|^2 -1) m^{\eps}   \Big) ,
\\ \label{LLM-suit2}
\dt H^{\eps} + \curl  E^{\eps} &=& - \dt m^{\eps} ,
\\ \label{LLM-suit3}
\dt E^{\eps} - \curl H^{\eps} &=& 0 , 
\\ \label{LLM-suit4}
\Div  E^{\eps} &=& \Div (H^{\eps}+m^{\eps}) = 0 .
\end{eqnarray}
Let us emphasize in particular that equation \eqref{LLM-suit1} is slightly different from the penalized equation used in \cite{CF98}. The difference is that we add the term $(H^{\eps} \cdot m^{\eps} ) m^{\eps}$ in order to be able to apply the weak maximum principle and to get a better regularity. A similar idea was used in \cite{DLW} 
and in \cite{kevin} for the quasi-stationary Landau-Lifshitz equations. 

As a  first step in order to prove Theorem \ref{CF}, one then establishes the existence 
of weak solutions to \eqref{LLM-suit1}-\eqref{LLM-suit4}, an easy task since the  
condition $|m| = 1$ a.e. has been dropped out. 
More precisely, one obtains that for initial data as in Theorem \ref{CF}, that is 
for  $m_0$ in $L^{\infty} (\R^3 ; \R^3)$ such that 
$|m_0| = 1$ almost everywhere and $\nabla m_0$ is in $L^{2} (\R^3 ; \R^{9})$, 
$E_{0}$ and $H_{0}$ in $L^{2} (\R^3 ; \R^3)$ such that 
$\Div  E_0 = \Div (H_0 + m_{0} )= 0 $, there  exists, for all $\eps\in(0,1)$,  a weak solution $(m^{\eps},E^{\eps},H^{\eps}) : (0,\infty)\times\R^3 \rightarrow \R^9$ 
of \eqref{LLM-suit1}-\eqref{LLM-suit4} on $(0,\infty)\times\R^3$, with initial value $(m_0,E_0,H_0)$, 
such that, for all $T>0$, 
\begin{gather*}
m^{\eps}  \in  L^\infty ((0,T) ; \dot{H}^{1} (\R^3 ; \R^3))  , \quad 
(E^{\eps}, H^{\eps})     \in 
L^\infty ((0,T) ; L^{2} (\R^3 ; \R^6))   ,  \\  
( |m^{\eps} |^2 - 1  , \nabla m^{\eps} ) \in L^\infty ((0,T); L^{2} (\R^3 ;  \R^{10} )) \quad \text{and} \quad  
\dt m^{\eps}  \in L^{2} ((0,T) \times \R^3 ;  \R^3)  ,
\end{gather*}
and 
\begin{equation}
\label{MLLenergyGL}
\forall T\geq0, \quad 
\Big( \mathcal{E}^{\eps}_{\rm MLL} (T) 
 + \mathcal{E}^{\eps}_{\rm GL} (T) 
+   \int_{0}^T   \mathcal{D}^{\eps}_{\rm MLL} (t)  \, dt  \Big)_\eps \text{ is  bounded uniformly} ,
\end{equation}
where
\begin{gather*}
\mathcal{E}^{\eps}_{\rm MLL} (t) := 
\int_{\R^3}  \Big(  | E^{\eps} |^{2} (t,x)  +  | H^{\eps} |^{2} (t,x) 
+  | \nabla m^{\eps} |^{2} (t,x) \Big) dx ,
\quad 
 \mathcal{D}^{\eps}_{\rm MLL} (t) :=   \int_{ \R^3} | \partial_{t} m^{\eps} |^{2} (t,x)  \, dx  ,
\\  \ \text{ and } \  \mathcal{E}^{\eps}_{\rm GL} (t) := 
\frac{1}{2\eps}  \int_{\R^3}  \Big(| m^{\eps} (t,x) |^2 -1  \Big)^2 dx .
\end{gather*}
Then, by standard compactness arguments, one infers that there exists 
\begin{gather*}
m  \in  L^\infty ((0,T) ; \dot{H}^{1} (\R^3 ; \R^3))     , \quad 
(E, H) \in L^\infty ((0,T) ; L^{2} (\R^3 ; \R^6))   ,  \\  
\text{with } ( |m |^2 - 1  , \nabla m ) \in L^\infty ((0,T); L^{2} (\R^3 ;  \R^{10} )) \quad \text{and} \quad  
\dt m  \in L^{2} ((0,T) \times \R^3 ;  \R^3)  ,
\end{gather*}
such that, up to a subsequence, $m^{\eps}$  weakly converges to $m$ in ${H}^{1} ((0,T) \times \R^3 ; \R^3)$,  $|m^{\eps} |^2 - 1 $ converges to $0$ weakly in $L^{2} ((0,T) \times \R^3 ; \R)$ and almost everywhere, $(E^{\eps}, H^{\eps}) $ converges to $(E,H)$  weakly in $L^{2} ((0,T) \times \R^3 ; \R^6)$.
Moreover $(m,E, H) $ is a weak solution as claimed in Theorem \ref{CF}. Since we do not claim here any originality let us simply refer for instance to  \cite{AS} where this step is detailed  for the Landau-Lifshitz equation, and to \cite{CF98} where the case of the MLL equations is tackled. 

Our point here is the following.
\begin{Theorem}
\label{THsuitableMLL}
Let $(m,E,H)$ be a weak solution to the MLL equations  obtained as a limit point of the sequence $(m^{\eps},E^{\eps},H^{\eps})$ as considered above. 
Assume moreover that, up to a subsequence,   $  H^{\eps} \wedge E^{\eps}   $  and, for $i=1,2,3$, $ \partial_{t} m^{\eps} \cdot   \partial_{i} m^{\eps} $ converge respectively to  $H  \wedge E$   and $ \partial_{t} m \cdot   \partial_{i} m $ in the sense of distributions, and that $H^{\eps} \cdot m^{\eps} $  converges in $L^2_{\rm loc} ((0,T) \times (\R^3 ))$ to $H \cdot m$.
Then there exist two non negative distributions $d^{\mathfrak{a},1}_{\rm MLL} $ and $e^{\mathfrak{a}}_{\rm MLL} $ such that 
$d^\mathfrak{a}_{\rm MLL} = - d^{\mathfrak{a},1}_{\rm MLL}  - \partial_t e^{\mathfrak{a}}_{\rm MLL}$. 
\end{Theorem}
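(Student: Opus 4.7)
The plan is to derive a local energy identity for the Ginzburg-Landau-penalized system \eqref{LLM-suit1}-\eqref{LLM-suit4} in which the two non-signed contributions (a defect measure and the penalization energy) are already identified at level $\eps$, and then to pass to the distributional limit $\eps\to 0^+$, relying on the convergence hypotheses listed in the statement. The key features that make this possible are the convexity of the energy density (which will provide the sign of the defect in the limit) and the uniform $L^\infty$-bound $|m^\eps|\leq 1$ furnished by the weak maximum principle — which is precisely the reason why the extra term $(H^\eps\cdot m^\eps)m^\eps$ has been inserted in \eqref{LLM-suit1}.

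I take the scalar product of \eqref{LLM-suit1} with $\dt m^\eps$; since $(m^\eps\wedge \dt m^\eps)\cdot \dt m^\eps=0$ and $m^\eps\cdot \dt m^\eps=\tfrac{1}{2}\dt|m^\eps|^2$, this yields
\begin{equation*}
|\dt m^\eps|^2 - 2\dt m^\eps\cdot\Delta m^\eps - 2 H^\eps\cdot \dt m^\eps + (H^\eps\cdot m^\eps)\,\dt|m^\eps|^2 + \dt\!\left(\frac{(|m^\eps|^2-1)^2}{2\eps}\right) = 0.
\end{equation*}
Combining this with the Poynting identity extracted from \eqref{LLM-suit2}-\eqref{LLM-suit3} and with the integration by parts $2\dt m^\eps\cdot \Delta m^\eps = 2\sum_i \partial_i(\dt m^\eps\cdot\partial_i m^\eps) - \dt|\nabla m^\eps|^2$, one obtains the penalized local identity
\begin{equation*}
\dt e^\eps_{\rm MLL} + d^\eps_{\rm MLL} + \Div f^\eps_{\rm MLL} + \dt\!\left(\frac{(|m^\eps|^2-1)^2}{2\eps}\right) + (H^\eps\cdot m^\eps)\,\dt|m^\eps|^2 = 0,
\end{equation*}
where $(e^\eps_{\rm MLL}, d^\eps_{\rm MLL}, f^\eps_{\rm MLL})$ are defined as in \eqref{edfMLL} with $(m,E,H)$ replaced by $(m^\eps,E^\eps,H^\eps)$.

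Next I pass to the limit $\eps\to 0^+$ in the sense of distributions on $(0,T)\times\R^3$. Up to a subsequence, the uniform bound \eqref{MLLenergyGL} and Banach-Alaoglu give weak-$*$ limits in the space of non-negative Radon measures for $|E^\eps|^2$, $|H^\eps|^2$, $|\nabla m^\eps|^2$, $|\dt m^\eps|^2$ and $\frac{(|m^\eps|^2-1)^2}{2\eps}$. Weak lower semicontinuity of the $L^2$-norm entails that the defects
\begin{equation*}
e^{\mathfrak{a}}_{\rm MLL} := \lim_\eps\!\left(|E^\eps|^2+|H^\eps|^2+|\nabla m^\eps|^2+\frac{(|m^\eps|^2-1)^2}{2\eps}\right)-\bigl(|E|^2+|H|^2+|\nabla m|^2\bigr)
\end{equation*}
and $d^{\mathfrak{a},1}_{\rm MLL}:=\lim_\eps|\dt m^\eps|^2-|\dt m|^2$ are non-negative distributions on $(0,T)\times\R^3$, while the hypotheses of the statement provide $f^\eps_{\rm MLL}\to f_{\rm MLL}$ in $\mathcal{D}'$.

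The main obstacle — and the crux of the argument — is to show that the cross term $(H^\eps\cdot m^\eps)\,\dt|m^\eps|^2$ vanishes in $\mathcal{D}'$. This is where the sphere-valued constraint $|m|=1$ surviving in the limit plays a decisive role: the maximum principle bound $|m^\eps|\leq 1$ ensures that $\dt|m^\eps|^2=2m^\eps\cdot \dt m^\eps$ is bounded in $L^2_{\rm loc}$, and, using Aubin-Lions to upgrade $m^\eps\to m$ to strong $L^2_{\rm loc}$-convergence while $\dt m^\eps\rightharpoonup \dt m$ weakly in $L^2_{\rm loc}$, one gets $\dt|m^\eps|^2\rightharpoonup 2m\cdot \dt m=\dt|m|^2=0$ weakly in $L^2_{\rm loc}$. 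Combined with the strong $L^2_{\rm loc}$-convergence $H^\eps\cdot m^\eps\to H\cdot m$ assumed in the theorem, a strong-weak splitting in the integral gives that the product converges to $0$ in $L^1_{\rm loc}$, hence in $\mathcal{D}'$. Passing to the limit in the penalized identity then yields
\begin{equation*}
\dt\bigl(e_{\rm MLL}+e^{\mathfrak{a}}_{\rm MLL}\bigr) + \bigl(d_{\rm MLL}+d^{\mathfrak{a},1}_{\rm MLL}\bigr) + \Div f_{\rm MLL} = 0,
\end{equation*}
and subtracting this from \eqref{LLlocal} produces the announced decomposition $d^{\mathfrak{a}}_{\rm MLL}=-d^{\mathfrak{a},1}_{\rm MLL}-\dt e^{\mathfrak{a}}_{\rm MLL}$, with $e^{\mathfrak{a}}_{\rm MLL}$ and $d^{\mathfrak{a},1}_{\rm MLL}$ non-negative by construction.
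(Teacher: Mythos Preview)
Your proof is correct and follows essentially the same route as the paper's: derive the penalized local energy identity \eqref{LLlocalEpsG}, pass to the distributional limit using weak lower semicontinuity for the quadratic terms $e^\eps_{\rm MLL}+e^\eps_{\rm GL}$ and $d^\eps_{\rm MLL}$, invoke the stated hypotheses for the flux $f^\eps_{\rm MLL}$, and show the cross term $(H^\eps\cdot m^\eps)(m^\eps\cdot\dt m^\eps)$ vanishes. You actually spell out the vanishing of the cross term more carefully than the paper does (which merely asserts it); the paper additionally remarks that the formal local identity at level $\eps$ is justified by the (non-uniform) $L^2_t H^2_x$ regularity of $m^\eps$ and by mollifying the linear Maxwell part, a point you might mention for completeness.
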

This theorem can be seen somehow as a counterpart of  \cite[Proposition 4]{raoul} 
which shows that any weak solution to the Euler equation which is a strong limit 
of  suitable  solutions to the Navier-Stokes equation, 
as viscosity goes to zero, is a suitable weak solution. 
Let us stress that in Theorem \ref{THsuitableMLL} the assumption is weaker than the strong convergence of $H^{\eps}$, $ E^{\eps}   $, $H^{\eps} \cdot m^{\eps} $ and $\nabla m^{\eps} $ in $L^2_{\rm loc} ((0,T) \times (\R^3 ))$.
  On the other hand when this strong convergence holds, the proof will reveal that $d^{\mathfrak{a}}_{\rm MLL}$ vanishes and that $e^{\mathfrak{a}}_{\rm MLL}$ is only due to the possible lack of strong convergence of the energy density 
  \begin{equation}
\label{88}
 e^{\eps}_{\rm GL} := 
\frac{1}{2\eps}   \big(| m^{\eps}  |^2 -1  \big)^2
\end{equation}
   associated with $\mathcal{E}^{\eps}_{\rm GL}$. It would be interesting to investigate the existence of another way to construct weak solutions 
  to the MLL equations for which the distribution $e^{\mathfrak{a}}_{\rm MLL}$ vanishes as well.

\paragraph{Case of the HMHD equations.}

Mimicking the mollification process in \eqref{suit1}-\eqref{suit2}, we consider the equations:
\begin{eqnarray}
\label{HMHD-suit1}
\dt u^{\eps} +   (u^{\eps})_{\eps} \cdot \nabla u^{\eps} + \nabla p^{\eps} &=& (\curl B^{\eps}) \wedge (B^{\eps})_{\eps} + \Delta u^{\eps}
\\ \label{HMHD-suit2}
\Div u^{\eps} &=& 0 ,
\\ \label{HMHD-suit3}
\dt B^{\eps} - \curl  (u^{\eps}  \wedge  ( B^{\eps})_{\eps}  ) + \curl  \Big(  ( \curl B^{\eps} )  \wedge ( B^{\eps})_{\eps} \Big) 
&=& \Delta B^{\eps} ,
\\ \label{HMHD-suit4}
\Div  B^{\eps} &=& 0 .
\end{eqnarray}
Standard arguments yield that for all $u_0$ and $B_0$  in $\mathcal{H}$, for all $\eps\in(0,1)$,  there exists a global weak solution  
$$(u^{\eps} , B^{\eps} ) \in 
\Big( L^\infty (0,T ; \mathcal{H} )\cap L^2 (0,T ; \mathcal{V} ) \Big)^2$$
solution to 
\eqref{HMHD-suit1}-\eqref{HMHD-suit4} corresponding to these initial data. 
Moreover
this solution satisfies the  energy inequality \eqref{nrjHALL}.
Therefore, up to a subsequence, $u^{\eps}$ and $ B^{\eps}$ converge in $L^\infty (0,T ; \mathcal{H} )$ weak-* 
and weakly  in $ L^2 (0,T ; \mathcal{V} ) $ respectively to $u$ and $B$.  
Using some a priori  temporal estimates and Aubin-Lions' lemma, we deduce that,  up to a subsequence, $u^{\eps}$ and $ B^{\eps}$ converges in $L^{3} ((0,T) \times \R^{3} )_{\rm loc}$.
This allows to establish that $(u,B)$ is a weak solution to the HMHD equations associated with the initial data $(u_0 ,B_0 )$. 
We refer to \cite{HMHD,CDL} for the details of this procedure, 
though for a slightly different regularization scheme.

Our point here is the following.
\begin{Theorem}
\label{THsuitableHMHD}
Let $(u,B)$ be a weak solution to the HMHD equations obtained as a limit point of 
the sequence $(u^\eps,B^\eps)$ as considered above.  
Assume moreover that, up to a subsequence, $B^{\eps}$ converges to $B$ in $L^{4} ((0,T) \times \R^{3} )_{\rm loc}$. 
 Then the anomalous energy dissipation $d^{\mathfrak{a}}_{\rm HMHD}$ is non positive.
 \end{Theorem}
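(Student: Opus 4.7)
The plan is to adapt to the HMHD system the Duchon--Robert approach \cite{raoul} developed for the Navier--Stokes equations. The Leray-type regularization \eqref{HMHD-suit1}-\eqref{HMHD-suit4} is tailored so that an \emph{exact} local energy identity holds for $(u^\eps, B^\eps)$, with the usual positive dissipation $|\curl u^\eps|^2 + |\curl B^\eps|^2$. Passing to the limit $\eps \to 0^+$, every term converges to its natural counterpart except this quadratic dissipation; weak lower semicontinuity in $L^2$ then produces a nonnegative defect measure, which will be identified, up to sign, with $d^{\mathfrak{a}}_{\rm HMHD}$.

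I would first take the inner product of \eqref{HMHD-suit1} with $u^\eps$ and of \eqref{HMHD-suit3} with $B^\eps$, and add them. Since $\Div (u^\eps)_\eps = 0$, the convective term becomes a pure divergence, and $B^\eps \cdot \Delta B^\eps = -|\curl B^\eps|^2 - \Div(\curl B^\eps \wedge B^\eps)$ (together with its $u^\eps$-analogue) produces the positive dissipation. Two algebraic cancellations are at the heart of the computation: the Lorentz source $u^\eps \cdot \bigl((\curl B^\eps) \wedge (B^\eps)_\eps\bigr)$ from the momentum equation is exactly compensated, modulo divergences, by the induction coupling $-B^\eps \cdot \curl(u^\eps \wedge (B^\eps)_\eps)$, via the identity $B \cdot \curl X = X \cdot \curl B + \Div(X \wedge B)$ and the cyclic triple-product formula; and the Hall contribution $B^\eps \cdot \curl\bigl((\curl B^\eps) \wedge (B^\eps)_\eps\bigr)$ reduces to a pure divergence because $a \cdot (a \wedge b) = 0$. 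This yields the exact identity
\begin{equation*}
\dt e^\eps + \Div \Phi^\eps + |\curl u^\eps|^2 + |\curl B^\eps|^2 = 0
\quad \text{in } \mathcal{D}'((0,T)\times\R^3),
\end{equation*}
where $e^\eps := \tfrac12(|u^\eps|^2 + |B^\eps|^2)$ and $\Phi^\eps$ is the explicit cubic flux whose formal limit is $f_{\rm HMHD}$ from \eqref{edfHMHD}.

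To pass to the limit I would combine $u^\eps \to u$ strongly in $L^3_{\rm loc}$, $B^\eps \to B$ strongly in $L^4_{\rm loc}$, and $\curl u^\eps \rightharpoonup \curl u$, $\curl B^\eps \rightharpoonup \curl B$ weakly in $L^2$, which follow from \eqref{nrjHALL} and the standing hypothesis. These regularities suffice to identify the distributional limit of each term of $\Phi^\eps$ with the corresponding term in $f_{\rm HMHD}$; the pressure factor $p^\eps u^\eps$ is treated via the elliptic equation solved by $p^\eps$ and standard Calder\'on--Zygmund bounds. As $|\curl u^\eps|^2 + |\curl B^\eps|^2$ is bounded in $L^1((0,T)\times\R^3)$, up to extraction it converges weakly-$*$ to a nonnegative Radon measure $\nu$, and weak lower semicontinuity ensures that $\mu := \nu - (|\curl u|^2 + |\curl B|^2)\,dx\,dt$ is a nonnegative distribution. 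The limit of the identity above then reads
\begin{equation*}
\dt e_{\rm HMHD} + \Div f_{\rm HMHD} + |\curl u|^2 + |\curl B|^2 + \mu = 0,
\end{equation*}
and comparing with the definition \eqref{HMHDlocalEps} gives $d^{\mathfrak{a}}_{\rm HMHD} = -\mu \leq 0$.

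The main obstacle is justifying the distributional convergence of the cubic Hall flux $\bigl((\curl B^\eps) \wedge (B^\eps)_\eps\bigr) \wedge B^\eps$: schematically, it is of the form $L^2\text{-weak} \times L^4_{\rm loc}\text{-strong} \times L^4_{\rm loc}\text{-strong}$, and it is precisely the extra hypothesis $B^\eps \to B$ in $L^4_{\rm loc}$ that makes this H\"older-type product converge in $\mathcal{D}'$. With only the $L^3_{\rm loc}$ strong convergence provided by Aubin--Lions this flux would be out of reach, which explains why the additional assumption appears in the statement.
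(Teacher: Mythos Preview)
Your proposal is correct and follows essentially the same approach as the paper: derive an exact local energy identity for the regularized system \eqref{HMHD-suit1}--\eqref{HMHD-suit4}, pass each flux term to the limit using the strong $L^3_{\rm loc}$ (and $L^4_{\rm loc}$ for $B$) convergences together with weak $L^2$ convergence of the curls and elliptic regularity for the pressure, and identify $d^{\mathfrak{a}}_{\rm HMHD}$ as minus the nonnegative defect arising from weak lower semicontinuity of $|\curl u^\eps|^2 + |\curl B^\eps|^2$. Your explanation of why the $L^4_{\rm loc}$ hypothesis is needed for the Hall flux term $\bigl((\curl B^\eps)\wedge(B^\eps)_\eps\bigr)\wedge B^\eps$ matches the paper's remark that this assumption can be dropped for the MHD system precisely because that term is then absent.
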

Let us mention that the assumption that, up to a subsequence, $B^{\eps}$ converges to $B$ in $L^{4} ((0,T) \times \R^{3} )_{\rm loc}$, can be dropped out in the case of the MHD equations, as 
will be shown in the proof.

\subsection{A few extra comments}

An analysis relying on Littlewood-Paley decomposition as in \cite{CCFS} 
for the incompressible Euler equations can be transposed to the MLL and HMHD systems. 
We will not go in this direction here. 
Let us only mention that it seems that the same range of regularity is 
attained by both methods.

\par \ \par

In \cite{Chemin}, Chemin recently used a strategy which is precisely close to the Littlewood-Paley counterpart of 
the key lemma used in \cite{CET}, and adapted below, cf. Section \ref{CETtype}, in order to obtain some sharp weak-strong uniqueness 
result for the incompressible Navier-Stokes equations, see in particular 
Lemma 2.3 in \cite{Chemin}. 

It could therefore be interesting to see if it is possible to sharpen the weak-strong uniqueness 
results given in Theorem
\ref{WSMLL} and  in Theorem \ref{WSHMHD}, that is to extend the statements to rougher strong solutions, using for example the regularization approach.

\par \ \par 

Since the results obtained here are local in space, they could easily 
be adapted to the case where the equations are set in a bounded domain; 
one then obtains a sufficient condition for the dissipation to 
vanish in the interior of the domain.
However the counterpart of such a result up to the boundary seems more 
difficult. 
In this direction let us mention the results \cite{CFS,FT} about the 
incompressible Navier-Stokes equations. 
Yet, with respect to this issue the MLL and HMHD equations seem to be closer 
to the incompressible Euler equations, for which there is, to our knowledge, 
no result of regularity up to the boundary.

%%%%%%%%%%%%%
%%%%%%%%%%%%%
%%%%%%%%%%%%%
%%%%%%%%%%%%%

\section{Weak-strong uniqueness: Proof of Theorem \ref{WSLL} and of Theorem \ref{WSMLL}}
\label{proofWSMLL}

\subsection{Case of the Landau-Lifshitz equation}

For sake of expository, we shall first give  the proof  of Theorem \ref{WSLL} which deals with 
the case of the Landau-Lifshitz equation \eqref{heu}, where the fields $E$ and $H$ are omitted. 
The  extension to the MLL equations \eqref{LLGC}-\eqref{LLMGC2}-\eqref{LLMGC3}-\eqref{LLMGC4} is given in the next subsection. 

We therefore consider $m_{2}: (0,\infty)\times\R^3 \rightarrow \R^3$ a global weak solution  to \eqref{heu} on $(0,\infty)\times\R^3$ satisfying the energy inequality \eqref{LLenergy}, for almost every $T\geq0$.
Here the weak formulation of  the Landau-Lifshitz equation \eqref{heu} reads: for every $\Psi \in H^{1} ( (0,\infty)\times\R^3 ; \R^3 )$, 
\begin{equation}
\label{wfLL}
\int_{0}^T  \int_{\R^3} ( \partial_{t} m_{2} + m_{2} \wedge \partial_{t} m_{2} ) \cdot \Psi   \, dx \, dt 
  = - 2  \sum_i \int_{0}^T  \int_{\R^3} (m_{2} \wedge \partial_{i} m_{2} ) \cdot  \partial_{i} \Psi   \, dx \, dt ,
\end{equation}
 where the sum is over $1,2,3$.
 The initial data $m_{0}$ is prescribed in the trace sense, and is here assumed to be smooth.

Let us consider $ m_{1}$, a smooth solution to  \eqref{heu} with the same initial data $m_{0}$.
We denote $m := m_{1} - m_{2}$ and expand $J_{\rm LL} [ m](T) $ into 
\begin{equation*}
J_{\rm LL} [ m](T) = J_{\rm LL} [ m_{1}](T) + J_{\rm LL} [ m_{2}](T)
- 2  \big(  \int_{\R^3}  \nabla m_{1} :   \nabla m_{2}   \, dx \big) (T)  
-2  \int_{0}^T  \int_{\R^3} \partial_{t} m_{1} \cdot \partial_{t} m_{2}  \, dx \, dt  .
\end{equation*}
Using some integration by parts, we have 
\begin{eqnarray} 
\label{tolaeb}
\big(  \int_{\R^3}  \nabla m_{1} :   \nabla m_{2}   \, dx \big) (T)  
&=&  \sum_i \int_{0}^T  \int_{\R^3}  ( \partial_{i}  \partial_{t}  m_{1} ) \cdot  \partial_{i}   m_{2}  \, dx \, dt
\\ \nonumber &&-  \int_{0}^T  \int_{\R^3}  (\Delta m_{1} ) \cdot \partial_{t}   m_{2}    \, dx \, dt
+  \int_{\R^3}    | \nabla m_{0} |^{2}  \, dx  .
\end{eqnarray}
Now, the two solutions satisfy the  energy inequality \eqref{LLenergy}, so that, for almost every $T\geq0$,
\begin{equation}
\label{Ple}
J_{\rm LL} [ m](T)  \leqslant K_{\rm LL} [ m_{1} ,m_{2}](T) ,
\end{equation}
where
\begin{eqnarray}
\nonumber
 K_{\rm LL} [ m_{1} ,m_{2}](T) &:=& 
 -2 \sum_i \int_{0}^T  \int_{\R^3}  ( \partial_{i}  \partial_{t}  m_{1} ) \cdot  \partial_{i}   m_{2}  \, dx \, dt
+2  \int_{0}^T  \int_{\R^3}  (\Delta m_{1} ) \cdot \partial_{t}   m_{2}    \, dx \, dt
\\  \label{Pledge}
&&-2  \int_{0}^T  \int_{\R^3} \partial_{t} m_{1} \cdot \partial_{t} m_{2}  \, dx \, dt  .
\end{eqnarray}
We now use  the weak formulation \eqref{wfLL} with the test functions $\Psi =  \partial_{t}  m_{1}$ and $\Psi =-2 \Delta m_{1}$:
\begin{eqnarray}
\label{wfa}
  \int_{0}^T  \int_{\R^3} ( \partial_{t} m_{2} + m_{2} \wedge \partial_{t} m_{2} ) \cdot    \partial_{t}  m_{1}  \, dx \, dt 
  = - 2  \sum_i \int_{0}^T  \int_{\R^3} (m_{2} \wedge \partial_{i} m_{2} ) \cdot  \partial_{i}  \partial_{t}  m_{1}   \, dx \, dt ,
  \\ \label{wfb}
  - 2 \int_{0}^T  \int_{\R^3} ( \partial_{t} m_{2} + m_{2} \wedge \partial_{t} m_{2} ) \cdot  \Delta m_{1}  \, dx \, dt 
  = 4  \sum_i \int_{0}^T  \int_{\R^3} (m_{2} \wedge \partial_{i} m_{2} ) \cdot  \partial_{i}  \Delta m_{1}   \, dx \, dt .
\end{eqnarray}
On the other hand, since $m_{1}$ is a strong solution to \eqref{heu}, we have
\begin{eqnarray}
\label{stro1}
  \int_{0}^T  \int_{\R^3} ( \partial_{t} m_{1} + m_{1} \wedge \partial_{t} m_{1} ) \cdot    \partial_{t}  m_{2}  \, dx \, dt 
  =  2  \sum_i \int_{0}^T  \int_{\R^3} (m_{1} \wedge \Delta m_{1} ) \cdot   \partial_{t}  m_{2}   \, dx \, dt ,
\\ \label{stro2}
 2 \int_{0}^T  \int_{\R^3}\partial_{i}   ( \partial_{t} m_{1} + m_{1} \wedge \partial_{t} m_{1} ) \cdot    \partial_{i}  m_{2}  \, dx \, dt 
  = 4  \sum_i \int_{0}^T  \int_{\R^3}\partial_{i}   (m_{1} \wedge \partial_{i} m_{1} ) \cdot  \partial_{i}  m_{2}   \, dx \, dt .
\end{eqnarray}
Thanks to \eqref{wfa}-\eqref{stro2} we get that 
\begin{equation*}
 K_{\rm LL} [ m_{1} ,m_{2}]  := -4 (I_{1} + I_{2} ) + 2 (I_{3} + \ldots + I_{6} ) + (I_{7} + I_{8} ) , 
\end{equation*}
with
\begin{eqnarray*}
I_{1} :=    \sum_i   \int_{0}^T  \int_{\R^3} (   m_{2} \wedge   \partial_{i} m_2 ) \cdot  \partial_{i}     \Delta m_{1}   \, dx \, dt
                           , \quad I_{2} :=  \sum_i   \int_{0}^T  \int_{\R^3}  \big(  \partial_{i}   (   m_{1} \wedge \Delta m_{1} )  \big) \cdot    \partial_{i} m_2          \, dx \, dt              ,
\\ I_{3} :=   \sum_i   \int_{0}^T  \int_{\R^3}   \partial_{i}   (   m_{1} \wedge    \partial_{t} m_{1} )  \cdot    \partial_{i} m_2   \, dx \, dt
                              , \quad I_{4} := -  \int_{0}^T  \int_{\R^3}    (   m_{2} \wedge    \partial_{t} m_{2} )  \cdot \Delta m_{1}  \, dx \, dt ,
\\ I_{5} :=        -        \int_{0}^T  \int_{\R^3} (   m_{1} \wedge \Delta m_{1} ) \cdot   \partial_{t} m_{2} \, dx \, dt
              , \quad I_{6} :=   \sum_i  \int_{0}^T  \int_{\R^3} (   m_{2} \wedge    \partial_{i} m_{2} )  \cdot  \partial_{i}   \partial_{t} m_1  \, dx \, dt,
\\ I_{7} :=      \int_{0}^T  \int_{\R^3} (   m_{2} \wedge    \partial_{t} m_{2} )  \cdot \partial_{t} m_1            \, dx \, dt
                       , \quad I_{8} := \int_{0}^T  \int_{\R^3} (   m_{1} \wedge    \partial_{t} m_{1} )  \cdot \partial_{t} m_2  \, dx \, dt.   
\end{eqnarray*}

Using Leibniz' rule and the properties of the triple product, we get 
\begin{eqnarray*}
I_{1} + I_{2} &=& 
-   \sum_i  \int_{0}^T  \int_{\R^3} (   m_{2}  \wedge     \partial_{i}  \Delta m_{1}    ) \cdot  \partial_{i}   m_{2}  \, dx \, dt
+  \sum_i  \int_{0}^T  \int_{\R^3} (   m_{1}  \wedge     \partial_{i}  \Delta m_{1}    ) \cdot  \partial_{i}   m_{2}  \, dx \, dt
\\ &&+ \sum_i  \int_{0}^T  \int_{\R^3} (    \partial_{i}   m_{1}  \wedge   \Delta m_{1}    ) \cdot  \partial_{i}   m_{2}  \, dx \, dt
\\  &=&   \sum_i  \int_{0}^T  \int_{\R^3} (   m  \wedge     \partial_{i}  \Delta m_{1}    ) \cdot  \partial_{i}   m_{2}  \, dx \, dt
- \sum_i  \int_{0}^T  \int_{\R^3} (    \partial_{i}   m_{1}  \wedge   \Delta m_{1}    ) \cdot  \partial_{i}   m  \, dx \, dt
\\  &=&   \sum_i  \int_{0}^T  \int_{\R^3} (   m  \wedge     \partial_{i}  \Delta m_{1}    ) \cdot  \partial_{i}   m_{2}  \, dx \, dt 
- \sum_i  \int_{0}^T  \int_{\R^3} (    \partial_{i}   m  \wedge   \Delta m_{1}    ) \cdot  \partial_{i}   m  \, dx \, dt 
\\   &=& -   \sum_i  \int_{0}^T  \int_{\R^3} (   m  \wedge     \partial_{i}  \Delta m_{1}    ) \cdot  \partial_{i}   m_{2}    \, dx \, dt 
+   \sum_i   \int_{0}^T  \int_{\R^3}\big( \partial_{i} ( m  \wedge     \Delta m_{1}    )\big) \cdot  \partial_{i}   m_{1}    \, dx \, dt 
\\ &&-   \sum_i  \int_{0}^T  \int_{\R^3} ( m  \wedge     \partial_{i}  \Delta m_{1}    ) \cdot  \partial_{i}   m_{1}  \, dx \, dt .
\end{eqnarray*}
By an integration by parts, we see that the second term in the right hand side above vanishes. The remaining two terms can be combined into
\begin{equation}
\label{12}
I_{1} + I_{2} = -  \sum_i   \int_{0}^T  \int_{\R^3} (   m  \wedge     \partial_{i}  \Delta m_{1}    ) \cdot  \partial_{i}   m   \, dx \, dt.
\end{equation}

We use again Leibniz' rule to expand $I_{3}$ into 
\begin{equation}
\label{splittin}
I_{3} =   \sum_i   \int_{0}^T  \int_{\R^3}   \big((  \partial_{i}    m_{1}) \wedge    \partial_{t} m_{1} \big)  \cdot    \partial_{i} m_2    \, dx \, dt
+   \sum_i   \int_{0}^T  \int_{\R^3}   (      m_{1} \wedge  \partial_{i}  \partial_{t} m_{1} )  \cdot    \partial_{i} m_2    \, dx \, dt
=:  I_{3,a}  + I_{3,b}  .
\end{equation}
Then, we observe that
\begin{eqnarray}
 I_{3,b}  + I_{6} &=&  \sum_i  \int_{0}^T  \int_{\R^3} \det (m , \partial_i \partial_{t} m_{1} , \partial_i m_{2} )  \, dx \, dt ,
 \\  I_{4}  + I_{5} &=& \int_{0}^T  \int_{\R^3}  \det (m , \partial_{t} m_{2} , \Delta m_{1}  )  \, dx \, dt .
\end{eqnarray}
On the other hand, we have 
\begin{eqnarray}
\nonumber
 I_{3,a}  &=&  -  \sum_i   \int_{0}^T  \int_{\R^3} \big(  (  \partial_{i}    m_{1}) \wedge    \partial_{t} m_{1} \big)  \cdot    \partial_{i} m   \, dx \, dt
  \\  \label{I3a} &=& \int_{0}^T  \int_{\R^3}   \big(  \Delta m_{1} \wedge    \partial_{t} m_{1} )  \cdot  m   \, dx \, dt
  +    \sum_i   \int_{0}^T  \int_{\R^3} \big(  (  \partial_{i}    m_{1}) \wedge    \partial_{t} m_{1} \big)  \cdot   m   \, dx \, dt ,
\end{eqnarray}
by integration by parts.

Combining \eqref{splittin}-\eqref{I3a} we obtain
\begin{equation}
\label{36}
I_{3} + \ldots + I_{6} = -  \sum_i   \int_{0}^T  \int_{\R^3} \det (m,  \partial_{t} \partial_{i}    m_{1} , \partial_{i}    m )   \, dx \, dt
-   \int_{0}^T  \int_{\R^3}   \det (m,  \partial_{t} m,  \Delta m_{1} )  \, dx \, dt .
\end{equation}

We have 
\begin{eqnarray}
\nonumber
I_{7} +  I_{8} &=&  \int_{0}^T  \int_{\R^3} \Big(   \det (m,  \partial_{t} m_{2} ,  \partial_{t} m_{1} ) +  \det (\partial_{t} m_{2} ,  m_{1} ,  \partial_{t} m_{1} ) 
\Big)   \, dx \, dt
\\  \label{78} &=& -   \int_{0}^T  \int_{\R^3}   \det (m,  \partial_{t} m_{2} ,  \partial_{t} m_{1} )   \, dx \, dt
=   \int_{0}^T  \int_{\R^3}   \det (m,  \partial_{t} m ,  \partial_{t} m_{1} )  \, dx \, dt .
\end{eqnarray}

Now, summing \eqref{12}, \eqref{36} and  \eqref{78}, we deduce that 
\begin{eqnarray}
\label{distingo}
 K_{\rm LL} [ m_{1} ,m_{2}]  &=& 4 \sum_i \int_{0}^T  \int_{\R^3}  (m \wedge  \partial_{i}  \Delta   m_{1} ) \cdot  \partial_{i} m   \, dx \, dt
 -  2\sum_i \int_{0}^T  \int_{\R^3}  (m \wedge  \partial_{t}  \partial_{i} m_{1} ) \cdot   \partial_{i} m  \, dx \, dt
\\  \nonumber && - 2 \int_{0}^T  \int_{\R^3}  (m \wedge  \partial_{t} m) \cdot  \Delta   m_{1}  \, dx \, dt
 +  \int_{0}^T  \int_{\R^3}  (m \wedge  \partial_{t} m) \cdot  \partial_{t} m_{1}  \, dx \, dt.
\end{eqnarray}

Since $m$ vanishes at initial time, Poincar\'e's inequality yields 
$$
 \int_{0}^T  \int_{\R^3} | m |^{2} \leqslant o(T)  \int_{0}^T  \int_{\R^3} | \partial_{t} m |^{2} \, dx \, dt .
 $$
Thus, for $T$ small enough, 
one gets 
\begin{eqnarray*}
 |K_{\rm LL} [ m_{1} ,m_{2}]  | \leqslant \frac12  \int_{0}^T  \int_{\R^3} | \partial_{t} m |^{2} \, dx \, dt
 + C  \int_{0}^T  \int_{\R^3} | \nabla m |^{2}  \, dx \, dt.
 \end{eqnarray*}
We combine this with inequality 
\eqref{Ple} and use Gronwall's lemma to conclude that $m$ vanishes, first for small time, and the argument can be repeated as many times as necessary.
%

%%%%%%%%%%%%%
%%%%%%%%%%%%%
%%%%%%%%%%%%%
%%%%%%%%%%%%%

\subsection{Case of the MLL equations}

We  consider $(m_{2} ,E_2,H_2) : (0,\infty)\times\R^3 \rightarrow \R^9$ a global weak solution  to the MLL equations  on $(0,\infty)\times\R^3$ satisfying the energy inequality, for almost every $T\geq0$.
Here the weak formulation reads: for every $\Psi \in H^{1} ( (0,T)\times\R^3 ; \R^3 )$, 
\begin{gather}
\label{wf}
  \int_{0}^T  \int_{\R^3} ( \partial_{t} m_{2} + m_{2} \wedge \partial_{t} m_{2} ) \cdot \Psi   \, dx \, dt 
  = - 2  \sum_i \int_{0}^T  \int_{\R^3} (m_{2} \wedge \partial_{i} m_{2} ) \cdot  \partial_{i} \Psi   \, dx \, dt
\\ \nonumber   + 2 \int_{0}^T  \int_{\R^3} (m_{2} \wedge H_2 ) \cdot \Psi  \, dx \, dt
   ,
   \\ \label{wf2}
   - \int_{0}^T     \int_{\R^3} (H_{2}+m_{2}) \cdot \dt  \Psi   \, dx \, dt + \int_{0}^T     \int_{\R^3} E_2  \cdot \curl  \Psi   \, dx \, dt 
  =  \int_{\R^3} (H_0 +m_0 ) \cdot \Psi (0,\cdot) \, dx
  \\ \nonumber   -  (  \int_{\R^3} (H_{2}+m_{2}) \cdot \Psi \, dx )(T)  ,
  \\ \label{wf3}
  -\int_{0}^T     \int_{\R^3}   E_2 \cdot  \partial_{t}  \Psi    \, dx \, dt  
  - \int_{0}^T     \int_{\R^3} H_{2}  \cdot \curl  \Psi  \, dx \, dt
  = \int_{\R^3} E_0 \cdot \Psi (0,\cdot) \, dx
  - (  \int_{\R^3} E_{2}\cdot  \Psi \, dx )(T)    ,
 \end{gather}
 where the sum is over $1,2,3$.
 The initial data $m_{0}$ is prescribed in the trace sense, and is here assumed to be smooth.

Let us consider $ (m_{1}  ,E_1 ,H_1)$, regular solution to the MLL equations with the same initial data $m_{0}$.

For $j=1,2$, let
\begin{gather*}
\mathcal{E}^j_{\rm MLL} (t) := 
\int_{\R^3}  \Big(  | E_j |^{2} (t,x)  +  | H_j |^{2} (t,x) 
+  | \nabla m_j |^{2} (t,x) \Big) dx 
 \ \text{ and } \ 
 \mathcal{D}^j_{\rm MLL} (t) :=   \int_{ \R^3} | \partial_{t} m_j |^{2} (t,x)  \, dx   .
\end{gather*}
Let us also introduce
\begin{equation*}
J^j_{\rm MLL}  (T) :=   \mathcal{E}^j_{\rm MLL}(T)  
 +  \int_{0}^T \mathcal{D}^j_{\rm MLL}   \, dt  
\end{equation*}
and
\begin{equation*}
L_{\rm MLL}   (T) :=  \int_{\R^3}  \Big(  | E |^{2} (T,x)  +  | H |^{2} (T,x) 
+  | \nabla m |^{2} (T,x) \Big) dx  +\int_{0}^T    \int_{ \R^3} | \partial_{t} m |^{2} (t,x)  \, dx  \, dt  ,
\end{equation*}
where $m := m_{1} - m_{2}$, $E:=E_1 -E_2$ and $H:= H_1 - H_2$.

We first expand $L_{\rm MLL} (T) $ into 
\begin{eqnarray*}
L_{\rm MLL} (T) &=& J^1_{\rm MLL}(T) + J^2_{\rm MLL} (T)
- 2  \big(  \int_{\R^3}  \nabla m_{1} :   \nabla m_{2}   \, dx \big) (T)  -2  \int_{0}^T  \int_{\R^3} \partial_{t} m_{1} \cdot \partial_{t} m_{2}  \, dx \, dt 
\\ &&- 2 \big(  \int_{\R^3} E_1 \cdot E_2 \, dx \big) (T)  
- 2 \big(  \int_{\R^3} H_1 \cdot H_2 \, dx \big) (T)    .
\end{eqnarray*}
Since the two solutions satisfy the energy inequality, and using \eqref{tolaeb}, we get
\begin{eqnarray}
\label{pourgro}
L_{\rm MLL} (T) \leqslant 2 \tilde{L}_{\rm MLL} (T)  +  \tilde{K}_{\rm MLL} (T) ,
\end{eqnarray}
with
\begin{eqnarray*}
 \tilde{K}_{\rm MLL} (T) &:=& - 2 \sum_i \int_{0}^T  \int_{\R^3}  ( \partial_{i}  \partial_{t}  m_{1} ) \cdot  \partial_{i}   m_{2}  \, dx \, dt
 +2 \sum_i   \int_{0}^T  \int_{\R^3}  (\Delta m_{1} ) \cdot \partial_{t}   m_{2}    \, dx \, dt
\\ &&
-2  \int_{0}^T  \int_{\R^3} \partial_{t} m_{1} \cdot \partial_{t} m_{2}  \, dx \, dt   ,
\end{eqnarray*}
and
$$
\tilde{L}_{\rm MLL} (T) := 
   \int_{\R^3}  \Big(  | E_0 |^{2} (T,x)  +  | H_0 |^{2} (T,x)  \Big) dx 
-  \big(  \int_{\R^3} E_1 \cdot E_2 \, dx \big) (T)  
-  \big(  \int_{\R^3} H_1 \cdot H_2 \, dx \big) (T)  
.$$

Following the computations performed for the LL equations, taking into account the extra-term coming from the magnetic field in \eqref{wf}, we obtain:
\begin{eqnarray*}
 \tilde{K}_{\rm MLL} (T) &:=&
K_{\rm LL} [m_1 ,m_2 ] (T) 
- 2  \int_{0}^T  \int_{\R^3} \det (m_2 ,H_2 , \dt m_1 )  \, dx \, dt
+ 4  \int_{0}^T  \int_{\R^3} \det (m_2 ,H_2 , \Delta m_1 )  \, dx \, dt
\\ &&- 2 \int_{0}^T  \int_{\R^3}  \det (m_1 , H_1 , \dt m_2 )  \, dx \, dt
+ 4 \sum_i  \int_{0}^T  \int_{\R^3} \big(  \partial_i ( m_1 \wedge H_1 ) \big) \cdot \partial_i  m_2  \, dx \, dt.
\end{eqnarray*}
where $K_{\rm LL} [m_1 ,m_2 ] (T) $ denotes here the right-hand-side of 
\eqref{distingo}.

We use  \eqref{wf2} and  \eqref{wf3} respectively with $\Psi = H_1$ and $\Psi = E_1$ to get
\begin{eqnarray*}
\tilde{L}_{\rm MLL} (T) &=&    - \int_{0}^T     \int_{\R^3} (H_{2}+m_{2}) \cdot \dt H_1  \, dx \, dt + \int_{0}^T     \int_{\R^3} E_2  \cdot \curl  H_1  \, dx \, dt
\\ && -    \int_{\R^3}  H_0 \cdot m_0  \, dx +  \int_{\R^3}  m_2  (T) \cdot H_1 (T)   \, dx
\\ &&- \int_{0}^T     \int_{\R^3} E_2  \cdot  \dt E_1  \, dx \, dt
- \int_{0}^T     \int_{\R^3} H_2  \cdot \curl  E_1  \, dx \, dt,
\end{eqnarray*}
and then, using that $ (m_{1}  ,E_1 ,H_1)$  satisfies Equations \eqref{LLMGC2} and \eqref{LLMGC3}, we obtain
\begin{eqnarray*}
\tilde{L}_{\rm MLL} (T) = \int_{0}^T     \int_{\R^3}  H_2  \cdot \dt m_1  \, dx \, dt +  \int_{0}^T     \int_{\R^3}  H_1  \cdot \dt m_2  \, dx \, dt .
\end{eqnarray*}
Now, we use on one hand that $m_{1}$ solves equation \eqref{LLMGC1} and on the other hand equation \eqref{wf} with $\Psi = H_1$ to obtain 
\begin{eqnarray*}
\tilde{L}_{\rm MLL} (T) &=&
- \int_{0}^T     \int_{\R^3}   \det ( H_2 , m_1 ,\dt m_1 ) \, dx \, dt 
+ 2 \int_{0}^T     \int_{\R^3}   \det ( H_2 , m_1 ,\Delta m_1 ) \, dx \, dt 
\\ &&+ 2  \int_{0}^T     \int_{\R^3}   \det ( H_2 , m_1 , H_1 ) \, dx \, dt 
\\ && - \int_{0}^T     \int_{\R^3}   \det ( H_1 , m_2 ,\dt m_2 ) \, dx \, dt 
- 2 \sum_i  \int_{0}^T     \int_{\R^3}   (\partial_i H_1 ) \cdot  (m_2 \wedge \partial_i m_2 ) \, dx \, dt 
\\ &&+ 2 \int_{0}^T     \int_{\R^3}  \det ( H_1 , m_2 , H_2 )  \, dx \, dt  .
\end{eqnarray*}
Therefore
\begin{eqnarray}
\label{djvu}
2 \tilde{L}_{\rm MLL} (T)  +  \tilde{K}_{\rm MLL} (T) 
= K_{\rm LL} [m_1 ,m_2 ] (T) +
2 P_1 (T) + 4 P_2 (T) + 4 P_3  (T) ,
\end{eqnarray}
where
\begin{eqnarray*}
 P_1 (T) &:=&-   \int_{0}^T  \int_{\R^3} \det (m_2 ,H_2 , \dt m_1 )  \, dx \, dt 
-  \int_{0}^T  \int_{\R^3}  \det (m_1 , H_1 , \dt m_2 )  \, dx \, dt 
 \\ &&  -\int_{0}^T     \int_{\R^3}   \det ( H_2 , m_1 ,\dt m_1 ) \, dx \, dt 
-  \int_{0}^T     \int_{\R^3}   \det ( H_1 , m_2 ,\dt m_2 ) \, dx \, dt ,
\end{eqnarray*}
\begin{eqnarray*}
 P_2 (T) &:=& \sum_i   \int_{0}^T  \int_{\R^3} \big(  \partial_i ( m_1 \wedge H_1 ) \big) \cdot \partial_i  m_2   \, dx \, dt 
+  \int_{0}^T     \int_{\R^3}   \det ( H_2 , m_1 ,\Delta m_1 ) \, dx \, dt 
\\ && -   \sum_i  \int_{0}^T     \int_{\R^3}   (\partial_i H_1 ) \cdot  (m_2 \wedge \partial_i m_2 ) \, dx \, dt 
+   \int_{0}^T  \int_{\R^3} \det (m_2 ,H_2 , \Delta m_1 )  \, dx \, dt ,
\end{eqnarray*}
and
\begin{eqnarray*}
P_3  (T)  :=  \int_{0}^T     \int_{\R^3}  \det ( H_1 , m_2 , H_2 )  \, dx \, dt  + \int_{0}^T     \int_{\R^3}   \det ( H_2 , m_1 , H_1 ) \, dx \, dt .
\end{eqnarray*}
Now, we observe that 
\begin{eqnarray}
\label{pP1}
 P_1 (T) = -   \int_{0}^T  \int_{\R^3} \det (m ,H , \dt m_1 )  \, dx \, dt 
+ \int_{0}^T     \int_{\R^3}   \det ( m , H_1 ,\dt m ) \, dx \, dt ,
\end{eqnarray}
and, by Leibniz' rule, that
\begin{eqnarray*}
 P_2 (T) &=&  
 \sum_i  \int_{0}^T  \int_{\R^3} \det ( \partial_i m_1 ,  H_1 , \partial_i  m_2 )   \, dx \, dt 
 + \sum_i   \int_{0}^T  \int_{\R^3}  \det ( m_1 , \partial_i  H_1 , \partial_i  m_2 )   \, dx \, dt 
\\ && +  \int_{0}^T     \int_{\R^3}   \det ( H_2 , m ,\Delta m_1 ) \, dx \, dt 
 - \sum_i   \int_{0}^T     \int_{\R^3}   \det  (\partial_i H_1 , m_2 , \partial_i m_2 ) \, dx \, dt .
 \end{eqnarray*}
Now, we use an integration by parts to get that
\begin{eqnarray*}
\int_{0}^T  \int_{\R^3} \det ( \partial_i m_1 ,  H_1 , \partial_i  m_2 )   \, dx \, dt  &=& \int_{0}^T  \int_{\R^3} \det ( \partial_i m_1 ,  H_1 , \partial_i  m )   \, dx \, dt 
\\ &=& - \int_{0}^T  \int_{\R^3} \det ( \partial_i^2 m_1 ,  H_1 ,  m )   \, dx \, dt 
\\ &&-  \int_{0}^T  \int_{\R^3} \det ( \partial_i m_1 , \partial_i H_1 ,   m )   \, dx \, dt  .
 \end{eqnarray*}
Thus
\begin{eqnarray}
\label{pP2}
 P_2 (T) =  - \int_{0}^T  \int_{\R^3} \det ( \Delta m_1 ,  H ,  m )   \, dx \, dt 
 -  \sum_i \int_{0}^T  \int_{\R^3} \det ( \partial_i m , \partial_i H_1 ,   m )   \, dx \, dt  .
\end{eqnarray}
Finally, we easily get
\begin{eqnarray}
\label{pP3}
 P_3 (T) =  \int_{0}^T     \int_{\R^3}  \det ( H_1 , m , H )  \, dx \, dt   .
 \end{eqnarray}
Plugging  \eqref{djvu}-\eqref{pP1}-\eqref{pP2}-\eqref{pP3}
into  \eqref{pourgro} one gets
\begin{eqnarray*}
L_{\rm MLL} (T)  
 \leqslant \frac12  \int_{0}^T  \int_{\R^3} | \partial_{t} m |^{2} \, dx \,  dt
 + C  \int_{0}^T  \int_{\R^3}  \Big(  | E |^{2}  +  | H |^{2} 
+  | \nabla m |^{2} \Big)  \, dx \,  dt,
 \end{eqnarray*}
and using a Gronwall lemma yields the desired conclusion.

%%%%%%%%%%%%%
%%%%%%%%%%%%%
%%%%%%%%%%%%%
%%%%%%%%%%%%%

\section{Weak-strong uniqueness: Proof of Theorem \ref{WSHMHD}}
\label{proofWSHMHD}

We will prove Theorem  \ref{WSHMHD} in a simplified setting which focuses on the difficulty due to the Hall effect. The extension to the general case is straightforward. 
Therefore we consider the following equations:
\begin{gather}
\label{HallFort1}
\partial_{t} B + \curl \big( (\curl B \wedge B) \big) = \Delta B ,
\\ \label{HallFort2} \Div B =0 .
\end{gather}
We consider a global weak solution $B_{2}$ to \eqref{HallFort1}-\eqref{HallFort2} associated with an initial data $B_{0} \in \mathcal{H}$, assumed smooth. 
 
 Here the weak formulation reads: for any $\Psi \in C^{1} ( [0,T]; C^{1}_{c} (\R^{3})$, for any $T>0$, 
 \begin{eqnarray}
\label{wedemerde} 
- \int_{0}^{T }  \int_{\R^{3}} (\partial_{t}  \Psi ) \cdot B_{2}   \, dx \,  dt
+ \big(  \int_{\R^{3}}    \Psi  \cdot B_{2}  \big) \vert_{t=T}  \, dx -  \big(  \int_{\R^{3}}    \Psi \vert_{t=0} \cdot B_{0} \, dx  \big)
\\ \nonumber + \int_{0}^{T }  \int_{\R^{3}} (\curl  \Psi ) \cdot  \big(  (\curl     B_{2}  )  \wedge B_{2}     \big)  \, dx \,  dt
 = -  \int_{0}^{T }  \int_{\R^{3}} (\curl  \Psi ) \cdot   (\curl     B_{2}  )   \, dx \,  dt,
\end{eqnarray}
and the energy inequality: for almost every $T>0$, 
\begin{equation}
\label{wener}
J_{\rm HMHD}  [ B_{2}](T) := 
\frac12  \big(  \int_{\R^{3}}  B_{2}  \, dx \big) \vert_{t=T} 
+  \int_{0}^{T }  \int_{\R^{3}}   (\curl     B_{2}  )^{2}  \, dx \,  dt
\leqslant 
\frac12   \int_{\R^{3}}  B_0^{2} \, dx. 
\end{equation}
We also consider a regular solution $B_{2}$ of \eqref{HallFort1}-\eqref{HallFort1} on $(0,T_{0})$, for $T_{0} > 0$.
We denote $B := B_{1} -B_{2}$ and expand $J_{\rm HMHD}  [ B](T)$ into 
\begin{equation*}
J_{\rm HMHD}  [ B](T) =
J_{\rm HMHD}  [ B_1](T) + J_{\rm HMHD}  [ B_{2}](T)
-   (\int_{\R^{3}}   B_{1} \cdot  B_{2} \, dx )(T) 
-2  \int_{0}^{T }  \int_{\R^{3}}  (\curl     B_{1}  )\cdot  (\curl     B_{2}  )  \, dx \,  dt,
\end{equation*}
and then we use that both $B_{1}$ and $B_{2}$ satisfy  the weak energy inequality \eqref{wener} to deduce that
\begin{equation}
\label{ducoin}
J_{\rm HMHD}  [ B](T)
\leqslant   \int_{\R^{3}}  B_0^{2} \, dx
-  (\int_{\R^{3}}   B_{1} \cdot  B_{2}\, dx  )(T) 
-2  \int_{0}^{T }  \int_{\R^{3}}  (\curl     B_{1}  )\cdot  (\curl     B_{2}  )   \, dx \,  dt .
\end{equation}
We use \eqref{wedemerde} with  $\Psi = B_{1}$ to get 
 \begin{eqnarray}
\label{wedemerde2} 
-    \int_{\R^{3}}  B_{0}^{2} 
+ \big(  \int_{\R^{3}}      B_{1} \cdot B_{2} \, dx \big) (T)  
+ \int_{0}^{T }  \int_{\R^{3}} (\curl  B_{1} ) \cdot   (\curl     B_{2}  )   \, dx \,  dt
 = 
\int_{0}^{T }  \int_{\R^{3}} (\partial_{t}  B_{1} ) \cdot B_{2}   \, dx \,  dt
\\ \nonumber - \int_{0}^{T }  \int_{\R^{3}} (\curl  B_{1} ) \cdot  \big(  (\curl     B_{2}  )  \wedge B_{2}     \big)  \, dx \,  dt
 .
\end{eqnarray}
Now we use that $B_{1}$ satisfies \eqref{HallFort1}-\eqref{HallFort2} to obtain
\begin{eqnarray*}
\int_{0}^{T }  \int_{\R^{3}} (\partial_{t}  B_{1} ) \cdot B_{2}   \, dx \,  dt
= -  \int_{0}^{T }  \int_{\R^{3}} \big( (\curl  B_{1})  \wedge B_{1} )\big) \cdot   \curl     B_{2}      \, dx \,  dt
- \int_{0}^{T }  \int_{\R^{3}} (\curl  B_{1})  \cdot (\curl  B_{2})  \, dx \,  dt .
\end{eqnarray*}
Plugging this into \eqref{wedemerde2} provides that 
\begin{eqnarray*}
L := -    \int_{\R^{3}}  B_{0}^{2} 
+ \big(  \int_{\R^{3}}      B_{1} \cdot B_{2}  \big) (T)  \, dx
+ 2\int_{0}^{T }  \int_{\R^{3}} (\curl  B_{1} ) \cdot   (\curl     B_{2}  )   \, dx \,  dt
\end{eqnarray*}
is given by
\begin{eqnarray*}
L  &=&
-  \int_{0}^{T }  \int_{\R^{3}} \big( (\curl  B_{1})  \wedge B_{1} )\big) \cdot   \curl     B_{2}    \, dx \,  dt
-  \int_{0}^{T }  \int_{\R^{3}} (\curl  B_{1})  \cdot (\curl  B_{2}  \wedge B_{2})  \, dx \,  dt
\\ &=&
-  \int_{0}^{T }  \int_{\R^{3}} \det( (\curl  B_{1})  , B_{1} , \curl     B_{2}  ) \, dx \,  dt
 +  \int_{0}^{T }  \int_{\R^{3}}  \det( (\curl  B_{1})  , B_{2} , \curl     B_{2}  ) \, dx \,  dt
 \\ &=&
-  \int_{0}^{T }  \int_{\R^{3}} \det( (\curl  B_{1})  , B , \curl     B_{2}  )  \, dx \,  dt
\\ &=&
-  \int_{0}^{T }  \int_{\R^{3}} \det( (\curl  B_{1})  , B , \curl     B  )  \, dx \,  dt,
\end{eqnarray*}
and therefore combining with \eqref{ducoin} we get 
\begin{eqnarray*}
J_{\rm HMHD}  [ B](T)
&\leqslant& 
  \int_{0}^{T }  \int_{\R^{3}} \det( (\curl  B_{1})  , B , \curl     B  ) \, dx \,  dt
\\ &\leqslant&  
C \int_{0}^{T }  \int_{\R^{3}} B^{2}  \, dx \,  dt+
\frac12  \int_{0}^{T }  \int_{\R^{3}}  (\curl B)^{2}  \, dx \,  dt , 
\end{eqnarray*}
which leads to the conclusion, thanks to a Gronwall estimate.

%%%%%%%%%%%%%
%%%%%%%%%%%%%
%%%%%%%%%%%%%
%%%%%%%%%%%%%

\section{Local conservations: Proof of Part i) of Theorem \ref{LLMonsag},  
Theorem~\ref{Hallonsaghel} and Theorem \ref{Hallonsagen}}
\label{Localconservations}
We use repetitively in the sequel the following formula: 
for two smooth vector fields $v$ and $w$ there holds
\begin{equation}
\label{formu}
-v \cdot \curl w + w \cdot  \curl v = \Div (v \wedge w) .
\end{equation}
\subsection{MLL equations: Proof of Part i) of Theorem \ref{LLMonsag}}
We first take the convolution of the equations with  the mollifier $\psi^\eps$ in order to obtain the regularized equations:
\begin{eqnarray}
\label{LLM1eps} 
\dt m_{\eps} +  (m \wedge \dt m)_{\eps} &=&
2 \sum_i \partial_i  \Big( m \wedge \partial_i m  \Big)_{\eps} + 2 (m \wedge H)_{\eps} ,
\\ \label{LLMGC2eps} 
\dt H_{\eps} + \curl  E_{\eps} &=& - \dt m_{\eps} ,
\\ \label{LLMGC3eps}
\dt E_{\eps} - \curl H_{\eps} &=& 0 , 
\\ \label{LLMGC4eps}
\Div  E_{\eps} &=& \Div (H_{\eps} + m_{\eps}) = 0 ,
\end{eqnarray}
where we use notation \eqref{notaeps}.

Next we apply formula  \eqref{convol1}, so that 
\eqref{LLM1eps} becomes 
\begin{equation}
\label{LLMGC1eps}
\dt m_{\eps} +  m_{\eps} \wedge \dt m_{\eps}
+ \mathcal{B}^{\eps} [m ,  \dt m] 
=  2 m_{\eps} \wedge \Delta m_{\eps} 
+ 2 \mathcal{B}^{\eps} [m ,   \Delta m] 
 + 2 m_{\eps} \wedge H_{\eps} 
 + 2 \mathcal{B}^{\eps} [m ,  H ] .
 \end{equation}
Let us take the inner product of \eqref{LLMGC1eps} with $\dt m_{\eps}$, 
$\Delta m_{\eps} $ and $H_{\eps} $ to get
\begin{eqnarray}
\label{R1}
|\dt m_{\eps} |^{2}
+ \mathcal{B}^{\eps} [m ,  \dt m] \cdot \dt m_{\eps}
&=&  2 (m_{\eps} \wedge \Delta m_{\eps} )  \cdot \dt m_{\eps}
+ 2 \mathcal{B}^{\eps} [m ,   \Delta m] \cdot \dt m_{\eps}
 \\ \nonumber && + 2 (m_{\eps} \wedge H_{\eps} )  \cdot \dt m_{\eps}
 + 2 \mathcal{B}^{\eps} [m ,  H ] \cdot \dt m_{\eps}  ,
 \end{eqnarray}
\begin{eqnarray} \label{R2}
 \dt m_{\eps} \cdot \Delta m_{\eps}  +  (m_{\eps} \wedge \dt m_{\eps} )\cdot \Delta m_{\eps} 
+ \mathcal{B}^{\eps} [m ,  \dt m]  \cdot \Delta m_{\eps}  
&=& 2 \mathcal{B}^{\eps} [m ,   \Delta m]  \cdot \Delta m_{\eps}
  \\ \nonumber && +  2 (m_{\eps} \wedge H_{\eps} ) \cdot \Delta m_{\eps}
 + 2 \mathcal{B}^{\eps} [m ,  H ] \cdot \Delta m_{\eps}  ,
 \end{eqnarray}
\begin{eqnarray}
\label{R3}
 \dt m_{\eps}  \cdot H_{\eps} +  (m_{\eps} \wedge \dt m_{\eps} ) \cdot H_{\eps} 
+ \mathcal{B}^{\eps} [m ,  \dt m]   \cdot H_{\eps} 
&=&  2 (m_{\eps} \wedge \Delta m_{\eps} ) \cdot H_{\eps} 
+ 2 \mathcal{B}^{\eps} [m ,   \Delta m]  \cdot H_{\eps} 
  \\ \nonumber && + 2 (m_{\eps} \wedge H_{\eps} )  \cdot H_{\eps} 
 + 2 \mathcal{B}^{\eps} [m ,  H ]   \cdot H_{\eps} . 
\end{eqnarray}
On the other hand we take the inner product of \eqref{LLMGC2eps} with $H_{\eps}$, the inner product of \eqref{LLMGC3eps} with $E_{\eps}$ and we take the sum to get
\begin{equation}
\label{R4}
\dt  ( |E_{\eps}|^{2} + |H_{\eps}|^{2} ) + 2 \Div (H_{\eps} \wedge E_{\eps}) = 
- 2 H_{\eps} \cdot \dt m_{\eps} .
\end{equation}

Let us now compute $\eqref{R1} - 2 \eqref{R2} - 2 \eqref{R3} +  \eqref{R4}$. 
This yields
\begin{equation*}
|\dt m_{\eps} |^{2} 
- 2 \dt m_{\eps} \cdot \Delta m_{\eps} 
+\dt  ( |E_{\eps}|^{2} + |H_{\eps}|^{2} ) + 2 \Div (H_{\eps} \wedge E_{\eps}) 
=  d_{\rm MLL}^{\mathfrak{a},\eps}  ,
\end{equation*}
with $d_{\rm MLL}^{\mathfrak{a},\eps} $ given by  \eqref{eMLLM}.

Therefore it suffices to observe that 
\begin{equation*}
- 2 \dt m \cdot \Delta m = - 2 \sum_i \partial_i 
\Big(   \dt m \cdot \partial_i m \Big) + \dt  ( |\nabla m|^{2} ) ,
\end{equation*}
to obtain
\begin{equation}
\label{LLlocaleps}
\dt e^{\eps}_{\rm MLL} + 
d^{\eps}_{\rm MLL}  + 
\Div f^{\eps}_{\rm MLL} 
= d^{\mathfrak{a},\eps}_{\rm MLL} ,
\end{equation}
where
\begin{equation}
\label{edfMLLeps}
e^{\eps}_{\rm MLL}  :=  |E_\eps |^{2} + |H_\eps |^{2} + |\nabla m_\eps |^{2} ,
\quad d^{\eps}_{\rm MLL}  := 
|\dt m_\eps |^{2} 
\quad  f^{\eps}_{\rm MLL}  := 
- 2 (   \dt m_\eps \cdot \partial_i m_\eps )_{i=1,2,3}
 + 2 H_\eps \wedge E_\eps .
\end{equation}
Now we prove that, when $\eps \rightarrow 0$, $d_{\rm MLL}^{\mathfrak{a},\eps}$ 
converges, in the sense of distributions, to  $d^\mathfrak{a}_{\rm MLL} $
whatever is the choice of the mollifier. Let us recall that $d^\mathfrak{a}_{\rm MLL} $ is given by \eqref{LLlocal}.

Indeed it follows from the regularity of $m$, $E$ and $H$ that 
$|\dt m_\eps |^{2} $, $|E_\eps|^{2}$, $ |H_\eps|^{2}$, $ |\nabla m_\eps|^{2}$, 
$ \dt m_\eps \cdot \partial_i m_\eps $ and $H_\eps \wedge E_\eps$ 
converge respectively to $|\dt m |^{2} $, $|E|^{2}$, $ |H|^{2}$, 
$ |\nabla m|^{2}$, $ \dt m \cdot \partial_i m $ and $H \wedge E$ in 
$L^1 ((0,T) \times \R^3 )_{\rm loc}$.
As a consequence the left hand side of \eqref{LLlocaleps} converges, 
in the sense of distributions, to
$$|\dt m |^{2} 
+ \dt  ( |E|^{2} + |H|^{2} + |\nabla m|^{2} )
- 2 \sum_i \partial_i \Big(   \dt m \cdot \partial_i m \Big)
 + 2 \Div (H \wedge E)  . $$
This entails that,  $d_{\rm MLL}^{\mathfrak{a},\eps} $ converges, in the sense of distributions, to $d^\mathfrak{a}_{\rm MLL} $.

%%%%%%%%%%
%%%%%%%%%%
%%%%%%%%%%
%%%%%%%%%%
%%%%%%%%%%

\subsection{HMHD equations: Proof of Part i) of Theorem \ref{Hallonsaghel} 
and Theorem~\ref{Hallonsagen}}
\subsubsection{Regularization}
We start with the regularized equations:
\begin{eqnarray}
\label{HMHD1Q}
\dt u_{\eps} +   \Div (u \otimes u - B\otimes B )_{\eps} + \nabla  (p_m )_{\eps}  &=& \Delta u_{\eps} ,
\\ \label{HMHD2Q}
\Div u_{\eps} &=& 0 ,
\\ \label{HMHD3Q}
\dt B_{\eps} - \curl  (u  \wedge  B )_{\eps} + \curl  \Div (B\otimes B)_{\eps} &=& \Delta B_{\eps} ,
\\ \label{HMHD4Q}
\Div  B_{\eps} &=& 0 .
\end{eqnarray}
We use the decompositions \eqref{convol0}, \eqref{convol1} and \eqref{convol2}  to recast \eqref{HMHD1Q} and \eqref{HMHD3Q} as follows:
\begin{eqnarray}
\label{HMHD1Qsplit}
\dt u_{\eps} +   \Div (u_{\eps} \otimes u_{\eps} - B_{\eps} \otimes     B_{\eps}) +   \Div  ( \mathcal{C}^{\eps} [u,u ]  -    \mathcal{C}^{\eps} [B,B  ]  )
+ \nabla ( p_{\eps}   + \frac12 | B_{\eps} |^2 ) +  \frac12  \nabla \mathcal{A}^{\eps} [B,B  ]   =  \Delta u_{\eps} ,
\\ \label{HMHD3Qsplit}
\dt B_{\eps} - \curl  (u_{\eps}  \wedge  B_{\eps} ) 
- \curl   \mathcal{B}^{\eps} [u,B ]
+ \curl  \Big(  ( \curl B_{\eps} )  \wedge B_{\eps} +  \Div \mathcal{C}^{\eps} [B,B  ]   \Big)   = \Delta B_{\eps}  .
\end{eqnarray}
\subsubsection{Local magneto-helicity identity: Proof of Part i) 
of Theorem \ref{Hallonsaghel}} 
\label{Localconservhel}
Thanks to Leibniz' identity and \eqref{formu} there holds
\begin{equation*}
\dt (A_{\eps} \cdot B_{\eps} ) = ( \dt A_{\eps} )\cdot B_{\eps}  + A_{\eps} \cdot \dt  B_{\eps} = 2 A_{\eps} \cdot  \dt   B_{\eps} +  \Div (A_{\eps}  \wedge \dt A_{\eps} ) .
\end{equation*}
Using now  \eqref{HMHD3Qsplit} we obtain
\begin{eqnarray*}
\dt (A_{\eps} \cdot B_{\eps} ) &=&
- 2 A_{\eps}  \cdot  \curl \Big( (\curl B_{\eps} -u_{\eps})  \wedge  B_{\eps} )  \Big)
+ 2 A_{\eps}  \cdot  \Delta B_{\eps}
+ \Div (A_{\eps}  \wedge \dt A_{\eps} )
 \\ && + 2 A_{\eps}  \cdot  \curl   \mathcal{B}^{\eps} [u,B ]
  -  2 A_{\eps}  \cdot   \curl  \Div \mathcal{C}^{\eps} [B,B  ] .
\end{eqnarray*}
Thanks to \eqref{formu} and to the divergence free conditions we obtain
\begin{equation*}
- A_{\eps}  \cdot  \curl  \Big((\curl B_{\eps} -u_{\eps} ) \wedge B_{\eps} ) \Big) =  \Div \Big( A_{\eps} \wedge  ( (\curl B_{\eps} -u_{\eps} )    \wedge B_{\eps} ) \Big) ,
\end{equation*}
and
\begin{equation*}
  A_{\eps}  \cdot  \Delta B_{\eps} = - A_{\eps}  \cdot \curl \curl  B_{\eps}  = - B_{\eps} \cdot \curl B_{\eps}
+  \Div (A_{\eps} \wedge  \curl B_{\eps} ).
\end{equation*}

Therefore
\begin{equation}
\label{HelEps}
\dt  (A_{\eps} \cdot B_{\eps}) +  2 B_{\eps} \cdot  \curl B_{\eps}  
- \Div \Big( 2 \big( (u_{\eps}-\curl B_{\eps}) \wedge B_{\eps} 
- 2 \curl B_{\eps} - \partial_t A_{\eps} \big) \wedge A_{\eps} \Big)
  =  d_m^{\mathfrak{a},\eps}  ,
\end{equation}
where $d_m^{\mathfrak{a},\eps}$ is given by \eqref{eli}.

Let us now prove that  $d_m^{\mathfrak{a},\eps} $  converges 
in the sense of distributions to $d_m^{\mathfrak{a}}$.
Indeed we are going to prove that the left hand side of \eqref{HelEps} 
converges to
$$ \dt  (A \cdot B) +  2 B \cdot  \curl B  - \Div \Big( 2(u-\curl B) \wedge B 
- 2 \curl B - \partial_t A ) \wedge A \Big) .$$
Actually thanks to the estimates given by the existence theorem \ref{ADFL}, 
and the fact that the vector potential $A$ is in $L^2 (0,T;H^2 (\R^3 ))$ 
by elliptic regularity, we easily infer that
$$ \dt  (A_{\eps} \cdot B_{\eps}) +  2 B_{\eps} \cdot  \curl B_{\eps}  - \Div \Big( \big( 2(u_{\eps}-\curl B_{\eps}) \wedge B_{\eps} - 2 \curl B_{\eps}  \big) \wedge A_{\eps} \Big)$$
converges in the sense of distributions to
$$ \dt  (A \cdot B) +  2 B \cdot  \curl B  - \Div \Big(  \big(2(u-\curl B) \wedge B - 2 \curl B  \big) \wedge A \Big) .$$
Let us now turn our attention to the last term of  the left hand side of \eqref{HelEps}.
Using again  elliptic regularity,
we easily infer that  $A$ is in $L^\infty (0,T; \mathcal{V})$.
Moreover using equation \eqref{0HMHD3}, one infers that $\dt  A$ 
is in $L^\frac{4}{3} (0,T; \mathcal{V}')$.
 Actually this  estimate is used in course of proving  the existence theorem \ref{ADFL}, see \cite{HMHD}.
 From that we deduce that $ \Div \Big(  (\partial_t A_{\eps} ) \wedge A_{\eps} \Big)$ converges  in the sense of distributions to $ \Div \Big(  (\partial_t A ) \wedge A \Big)$.

This concludes the proof of the first part of Theorem \ref{Hallonsaghel}.

%%%%%%%%%%
%%%%%%%%%%
%%%%%%%%%%
%%%%%%%%%%
%%%%%%%%%%

%
\subsubsection{Local Energy identity: Proof of Part i) 
of Theorem \ref{Hallonsagen}}
\label{LoEnHMHD}
Let us take the inner product of \eqref{HMHD1Qsplit} and \eqref{HMHD3Qsplit} respectively with $u_{\eps}$ and $B_{\eps}$, and sum the resulting identities, taking into account that, thanks to \eqref{HMHD2Q} and \eqref{HMHD4Q}, 
\begin{equation*}
u_{\eps} \cdot \Div (u_{\eps} \otimes u_{\eps}) + u_{\eps} \cdot \nabla p_{\eps} = 
\Div \Big(  ( \frac12 |  u_{\eps} |^2 + p_{\eps} ) u_{\eps} \Big),
\end{equation*}
\begin{equation*}
u_{\eps}   \cdot  \Big(    - \Div (B_{\eps} \otimes     B_{\eps})  +   \frac{1}{2} \nabla (   | B_{\eps} |^2 )   \Big) 
 = 
-     u_{\eps}   \cdot   \Big(  ( \curl B_{\eps} )  \wedge B_{\eps} \Big) 
=  ( \curl B_{\eps} )  \cdot   \Big( u_{\eps}  \wedge  B_{\eps}  \Big),
\end{equation*}
so that
\begin{equation*}
u_{\eps}   \cdot  \Big(    - \Div (B_{\eps} \otimes     B_{\eps})  +   \frac{1}{2} \nabla (   | B_{\eps} |^2 )   \Big) 
  - B_{\eps}  \cdot  \Big( \curl  (u_{\eps}  \wedge  B_{\eps} )  \Big)
  = \Div  \Big( B_{\eps}    \wedge       (u_{\eps}  \wedge  B_{\eps} )          \Big)     ,
\end{equation*}
\begin{equation*}
 B_{\eps}  \cdot  \Big( \curl  \Big(  ( \curl B_{\eps} )  \wedge B_{\eps} \Big)    \Big)    =
   \Div  \Big(   \Big(  ( \curl B_{\eps} )  \wedge B_{\eps} \Big) \wedge B_{\eps} \Big) ,
\end{equation*}
and 
\begin{equation*}
-  u_{\eps}   \cdot  \Delta u_{\eps}  - B_{\eps}   \cdot  \Delta B_{\eps} =  |\curl u_{\eps}  |^2 + |\curl B_{\eps}  |^2
+ \Div \Big( (\curl u_{\eps} )\wedge   u_{\eps}  +  (\curl B_{\eps} )\wedge   B_{\eps}   \Big)  .
\end{equation*}
We thus obtain
\begin{equation}
\label{cucu}
\dt e^{\eps}_{\rm HMHD} + 
d^{\eps}_{\rm HMHD}  + 
\Div f^{\eps}_{\rm HMHD} 
=  d^{\mathfrak{a},\eps}_{\rm HMHD} ,
\end{equation}
where
\begin{gather}
\label{defcucu}
e^{\eps}_{\rm HMHD} := \frac{1}{2}  \Big( |u_{\eps} |^{2} +   |B_{\eps} |^{2}   \Big) , \quad 
 d^{\eps}_{\rm HMHD} := |\curl u_{\eps}  |^2 + |\curl B_{\eps}  |^2 , \quad 
\\ \nonumber f^{\eps}_{\rm HMHD} :=  ( \frac12 |  u_{\eps} |^2 + p_{\eps} ) u_{\eps}
 + B_{\eps} \wedge  ( u_{\eps} \wedge B_{\eps} )
 + ( \curl u_{\eps} )  \wedge u_{\eps}
 +  ( \curl B_{\eps} )  \wedge B_{\eps}
+  \big(  ( \curl B_{\eps} )  \wedge B_{\eps} \big) \wedge B_{\eps} ,
\\  \nonumber d^{\mathfrak{a},\eps}_{\rm HMHD}  :=
 - u_{\eps} \cdot   \Div  \Big( \mathcal{C}^{\eps} [u,u ] - \mathcal{C}^{\eps} [B,B  ]     \Big) 
 -   \frac{1}{2} u_{\eps} \cdot  \nabla \mathcal{A}^{\eps} [B, B ]
+ B_{\eps} \cdot  \curl   \mathcal{B}^{\eps} [u,B ] 
- B_{\eps} \cdot   \curl   \Div \mathcal{C}^{\eps} [B,B  ]    .
\end{gather}

Let us now prove that if $u$ and $B$ are given by Theorem~\ref{ADFL}
with $B \in L^{4} ((0,T) \times \R^{3} )_{\rm loc}$ 
then $ d_{\rm HMHD}^{\mathfrak{a},\eps} $ converges, in the sense of distributions, 
to $d_{\rm HMHD}^{\mathfrak{a}} $. 

First observe that since $u$ and $B$ belong to 
$L^\infty(0,T;L^2(\R^3)) \cap L^2(0,T;H^1(\R^3))$, 
they also belong to $L^3((0,T)\times\R^3)$.

It is not difficult to see that, under these assumptions, 
\begin{eqnarray*}
  |u_{\eps} |^{2} +   |B_{\eps} |^{2} ,
   |  u_{\eps} |^2  u_{\eps}, B_{\eps} \wedge  ( u_{\eps} \wedge B_{\eps} ), 
 ( \curl u_{\eps} )  \wedge u_{\eps},
  ( \curl B_{\eps} )  \wedge B_{\eps},
\big(  ( \curl B_{\eps} )  \wedge B_{\eps} \big) \wedge B_{\eps},
\\  |\curl u_{\eps}  |^2  \text{ and }  |\curl B_{\eps}  |^2
\end{eqnarray*}
 converge in $L^1 ((0,T) \times \R^3 )_{\rm loc}$ respectively to 
\begin{eqnarray*}
  |u |^{2} +   |B |^{2} ,
   |  u |^2  u, B \wedge  ( u \wedge B ), 
 ( \curl u )  \wedge u,
  ( \curl B )  \wedge B,
\big(  ( \curl B )  \wedge B \big) \wedge B,
   |\curl u  |^2  \text{ and }  |\curl B  |^2 .
\end{eqnarray*}
Therefore in order to prove that the left hand side of \eqref{cucu} converges 
in  the sense of distributions to the  left hand side of \eqref{HMHDlocalsansa}
it is sufficient to prove that $ p_{\eps} $ converges in  $L^\frac32 ((0,T) \times \R^3 )_{\rm loc}$
to $p$. 
But taking the divergence of \eqref{HMHD1}, and taking \eqref{HMHD2} 
into account, we obtain that the magnetic  pressure $p_m $ satisfies
\begin{equation}
\label{ellpm}
 \Delta p_m = -  \Div \Div (u \otimes u - B\otimes B ) .
\end{equation}
Then classical elliptic regularity allows to conclude that  $p_m $, 
and therefore $p$, is in $L^\frac32 ((0,T) \times \R^3 )_{\rm loc}$, 
what concludes the proof of Part i) of Theorem \ref{Hallonsagen}.

%%%%%%%%%%
%%%%%%%%%%
%%%%%%%%%%
%%%%%%%%%%
%%%%%%%%%%

\section{Technicalities}
\label{secTech}
In this section we gather a few technical results which will be 
useful in the sequel. 
\subsection{Some injections}

The following lemma is a consequence of Bernstein's lemma. 
Its proof is given in the Appendix. 
\begin{Lemma}
\label{bern}
Let  $\alpha,\tilde{\alpha} \in (0,1) \cup (1,2)$, and $p,r \in [1,\infty]$. 
Assume that $\tilde{\alpha}\leq\alpha$ and define $\tilde{p}\in[1,\infty]$ 
by $\tilde{\alpha} - 3/\tilde{p} = \alpha -3/p$. 
Then $\widetilde{L}^r (0,T; \dot{B}^\alpha_{p,\infty} (\R^3))$ 
is continuously embedded in 
$\widetilde{L}^r (0,T; \dot{B}^{\tilde{\alpha}}_{\tilde{p},\infty} (\R^3))$. 
As a consequence, any 
$u\in\widetilde{L}^r (0,T; \dot{B}^\alpha_{p,\infty} (\R^3))_{\rm loc}$
belongs to 
$\widetilde{L}^r (0,T; \dot{B}^{\tilde{\alpha}}_{\tilde{p},\infty} 
(\R^3))_{\rm loc}$. 
\end{Lemma}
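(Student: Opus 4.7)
My plan is to pass through the standard homogeneous Littlewood--Paley characterization of the Chemin--Lerner spaces $\widetilde{L}^r(0,T;\dot B^\alpha_{p,\infty}(\R^3))$, and then invoke Bernstein's inequality at the level of frequency-localized blocks. The first step is to establish, for every $\alpha \in (0,1)$ and $(p,r) \in [1,\infty]^2$, the equivalence
$$
\|u\|_{\widetilde{L}^r(0,T;\dot B^\alpha_{p,\infty})}
\ \sim\ \sup_{j \in \Z} 2^{j\alpha} \|\Delta_j u\|_{L^r(0,T;L^p(\R^3))},
$$
where $(\Delta_j)_{j\in\Z}$ denotes the homogeneous Littlewood--Paley decomposition (which is legitimate thanks to the assumption $u \in \mathcal{S}'_h$ built into Definition~\ref{oula}). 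The $\lesssim$ direction follows by writing $u(\cdot,\cdot-y) - u = \sum_j (\Delta_j u(\cdot,\cdot-y) - \Delta_j u)$ and using the elementary bound $\|\Delta_j u(\cdot,\cdot-y)-\Delta_j u\|_{L^p(\R^3)} \leq \min(|y|\,2^{j+1}\|\Delta_j u\|_{L^p},\,2\|\Delta_j u\|_{L^p})$, taking the $L^r(0,T)$ norm (Minkowski in the order $L^r_t L^p_x$), and optimizing the dyadic splitting at $j_0 \sim -\log_2|y|$ to produce the prefactor $|y|^\alpha$. The reverse direction is a routine discrete integration argument (writing $\Delta_j u$ as an average of translates of $u$ weighted by the kernel of $\Delta_j$, which has mean zero). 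For $\alpha \in (1,2)$, setting $\alpha = 1+(\alpha-1)$ with $\alpha-1\in(0,1)$ and using the identity $\|\Delta_j \partial_i u\|_{L^p} \sim 2^j \|\Delta_j u\|_{L^p}$ (valid for high frequencies, the low ones being harmless in $\mathcal{S}'_h$), Definition~\ref{oula} directly yields the same dyadic characterization.

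Once this equivalence is in hand, the embedding is a one-line consequence of Bernstein's inequality. The scaling condition $\tilde\alpha - 3/\tilde p = \alpha - 3/p$ combined with $\tilde\alpha \leq \alpha$ forces $\tilde p \geq p$, so Bernstein applies to $\Delta_j u$ (whose Fourier transform is supported in an annulus of radius $\sim 2^j$) and gives
$$
\|\Delta_j u(t,\cdot)\|_{L^{\tilde p}(\R^3)}
\ \lesssim\ 2^{3j(1/p-1/\tilde p)} \|\Delta_j u(t,\cdot)\|_{L^p(\R^3)}.
$$
Multiplying by $2^{j\tilde\alpha}$ and using the scaling relation yields $2^{j\tilde\alpha}\|\Delta_j u(t,\cdot)\|_{L^{\tilde p}} \lesssim 2^{j\alpha}\|\Delta_j u(t,\cdot)\|_{L^p}$ pointwise in $t$, uniformly in $j$. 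Taking $L^r(0,T)$ norms and then the supremum in $j$ delivers the claimed continuous embedding.

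The local version follows immediately: if $u$ belongs to $\widetilde{L}^r(0,T;\dot B^\alpha_{p,\infty})_{\rm loc}$ and $\chi \in C^\infty_{\rm c}((0,T)\times\R^3)$, then by definition $\chi u$ lies in the global space $\widetilde{L}^r(0,T;\dot B^\alpha_{p,\infty})$; the preceding global embedding therefore puts $\chi u$ in $\widetilde{L}^r(0,T;\dot B^{\tilde\alpha}_{\tilde p,\infty})$, which is exactly the membership of $u$ in the corresponding local space.

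The only real obstacle is the bookkeeping in the first step: one must carefully check that the finite-difference definition of Definition~\ref{oula} (with time integration preceding the supremum in $y$) coincides with the dyadic Chemin--Lerner characterization, handle both ranges $\alpha \in (0,1)$ and $\alpha \in (1,2)$ through the $\partial_i u$ clause of Definition~\ref{oula}, and justify the use of $\mathcal{S}'_h$ so that the Littlewood--Paley decomposition converges in the sense of tempered distributions. Once this equivalence is secured, Bernstein's inequality and the arithmetic identity $\tilde\alpha + 3(1/p - 1/\tilde p) = \alpha$ finish the proof.
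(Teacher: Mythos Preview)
Your proposal is correct and follows essentially the same route as the paper. Both arguments rest on the homogeneous Littlewood--Paley decomposition together with Bernstein's inequality and the equivalence between the finite-difference seminorm in Definition~\ref{oula} and the dyadic Chemin--Lerner norm $\sup_j 2^{j\alpha}\|\dot\Delta_j u\|_{L^r_t L^p_x}$; the paper simply organizes these ingredients slightly differently (citing one direction of the equivalence from \cite{bcd} and carrying out the other direction, with Bernstein folded in, as the main computation), whereas you establish the full equivalence first and then apply Bernstein in one line.
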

Let us observe that if $(p,\alpha)$ satisfies the relationship 
in \eqref{ship} then so does any $(\tilde{p},\tilde{\alpha})$ 
such that $\tilde{\alpha} - 3/\tilde{p} = \alpha - 3/p$.
Similarly if $(q,\beta)$ satisfies $q=12/(4\beta-1)$ 
(see Remark~\ref{interpol}),  
then so does any $(\tilde{q},\tilde{\beta})$ such that 
$\tilde{\beta} - 3/\tilde{q} = \beta - 3/q$. 

We will use in particular that 
\begin{itemize}
\item for $\alpha\in[4/3,11/6]$ and $\displaystyle p := \frac{9}{3\alpha-1}$, 
a function belonging to  the space 
$\widetilde{L}^3(0,T;\dot{B}^{\alpha}_{p,\infty}(\R^3))_{\rm loc}$ 
also belongs to  the space 
$\widetilde{L}^3 (0,T;\dot{B}^{\tilde{\alpha}}_{\tilde{p},\infty} 
(\R^3))_{\rm loc}$, 
with $\tilde{\alpha} := 4 -2 \alpha$ and  
$\displaystyle \tilde{p} := \frac{9}{3\tilde{\alpha} - 1}$ 
which satisfies 
$\displaystyle \frac{2}{p} + \frac{1}{\tilde{p}}  = 1$.
\item for $\beta\in(9/8,3/2)$ and $\displaystyle q := \frac{12}{4\beta-1}$, 
a function belonging to  the space 
$\widetilde{L}^4 (0,T ; \dot{B}^\beta_{q,\infty} (\R^3))_{\rm loc}$ 
also belongs to   the space 
$\widetilde{L}^4 (0,T ; \dot{B}^{2-\beta}_{\tilde{q},\infty} (\R^3))_{\rm loc}$, 
with $\displaystyle \tilde{q} := \frac{3}{\frac{7}{4}-\beta}$ 
which satisfies 
$\displaystyle \frac{1}{q} + \frac{1}{\tilde{q}}  = \frac{1}{2}$.
\end{itemize}

%%%%%%%%%%
%%%%%%%%%%
%%%%%%%%%%
%%%%%%%%%%
%%%%%%%%%%

\subsection{A Constantin-E-Titi type lemma}
\label{CETtype}
We will make a crucial use of the following lemma adapted from \cite{CET}. 
The notations $\mathcal{A}^{\eps}, \mathcal{B}^{\eps}, \mathcal{C}^{\eps}$ 
are from \eqref{convol0}, \eqref{convol1}, \eqref{convol2}.

\begin{Lemma}
\label{cruciallemma}
Let $i=0$, $1$ or $2$. 
Let $(r_1,r_2,r_3) \in [1,\infty]^3$, $(p_1,p_2,p_3) \in [1,\infty]^3$ 
and $(\alpha_1,\alpha_2,\alpha_3) \in [0,1)^3$ be such that 
$$
\frac{1}{r_{1}} + \frac{1}{r_{2}}  + \frac{1}{r_{3}} = 
\frac{1}{p_{1}} + \frac{1}{p_{2}}  + \frac{1}{p_{3}} = 1 
\quad \text{and} \quad 
\alpha_{1} + \alpha_{2} +\alpha_{3} \geq i. 
$$
Let $\phi^1 , \phi^2 , \phi^3$ be functions on $(0,T) \times \R^3$ 
such that for $j = 1,2,3$, 
\begin{itemize}
\item either  $\alpha_{j} \in (0,1)$ and $\phi^j \in 
\widetilde{L}^{r_j} (0,T ; \dot{B}^{\alpha_{j}}_{p_{j},\infty} (\R^3 ))_{\rm loc}$,
\item or $\alpha_{j} = 0$, and 
$\phi^j \in L^{r_j}  (0,T ; L^{p_{j}} (\R^3))_{\rm loc}$,
 \end{itemize}
and such that for at least one index $j \in \{1,2,3\}$ 
($j \in \{1,2\}$ in the case where $\alpha_{1} + \alpha_{2} +\alpha_{3} = i=0$),   
\begin{itemize}
\item either  $\alpha_{j} \in (0,1)$ and $\phi^j \in 
\widetilde{L}^{r_j} (0,T ; \dot{B}^{\alpha_{j}}_{p_{j},c_{0}} (\R^3))_{\rm loc}$,
\item or $\alpha_{j} = 0$, $r_{j}, p_{j} < \infty$ and 
$\phi^j \in L^{r_j}  (0,T ; L^{p_{j}}  (\R^3 ))_{\rm loc}$.
\end{itemize}
Then, for all $\chi \in C^\infty_{\rm c}((0,T)\times\R^3)$,
\begin{eqnarray}
\label{trili}
\int_{0}^{T} \int_{\R^3} \chi \Big(| \mathcal{A}^{\eps} [\phi^1,\phi^2] |   
+  | \mathcal{B}^{\eps} [\phi^1,\phi^2]   |   
+  | \mathcal{C}^{\eps} [\phi^1,\phi^2]   |  \Big)  \, 
| \nabla^{i}  \phi^3_{\eps}  | \,  dx \, dt \rightarrow 0 
\quad \text{when } \eps \rightarrow 0 .
\end{eqnarray}
\end{Lemma}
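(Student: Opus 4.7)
The central tool is a Constantin-E-Titi type commutator identity: for any bilinear operation $B$ (taking in our setting the form of the scalar product, the cross product, or the tensor product), one has pointwise
\begin{equation*}
(B(f,g))_\eps - B(f_\eps, g_\eps) = \int_{\R^3} \psi^\eps(y)\, B\bigl(f(\cdot-y)-f,\ g(\cdot-y)-g\bigr)\,dy - B(f-f_\eps,\, g-g_\eps).
\end{equation*}
This applies to $\mathcal{A}^\eps$, $\mathcal{B}^\eps$ and $\mathcal{C}^\eps$ simultaneously, and reduces the estimation of the commutators to that of finite differences $\delta_y\phi^j = \phi^j(\cdot-y)-\phi^j$ at scales $|y|\lesssim \eps$ (first piece), and of the mollification defect $\phi^j-\phi^j_\eps$ (second piece).

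I would then estimate the triple integral in \eqref{trili} by applying H\"older's inequality in $x$ with exponents $(p_1,p_2,p_3)$, followed by H\"older in $t$ with exponents $(r_1,r_2,r_3)$; the cut-off $\chi$ is absorbed by localising each $\phi^j$ to a slightly enlarged support, at the cost of constants depending on $\chi$. By the very definition of the Besov norms of Definition~\ref{oula}, $\|\delta_y\phi^j\|_{L^{r_j}_t L^{p_j}_x}\lesssim |y|^{\alpha_j}$ for indices with $\alpha_j>0$, and $\|\phi^j-\phi^j_\eps\|_{L^{r_j}_t L^{p_j}_x}\lesssim \eps^{\alpha_j}$ follows by averaging the previous bound against $\psi^\eps$. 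For the last factor, a Bernstein-type estimate for the mollifier --- obtained from Lemma~\ref{bern} through a dyadic decomposition of $\phi^3$, using that $\psi^\eps$ localises frequencies at scale $\lesssim 1/\eps$ --- yields $\|\nabla^i\phi^3_\eps\|_{L^{r_3}_t L^{p_3}_x}\lesssim \eps^{\alpha_3-i}$ when $\alpha_3>0$, while the elementary bound $\|\nabla^i\phi^3_\eps\|_{L^{r_3}_t L^{p_3}_x}\lesssim \eps^{-i}\|\phi^3\|_{L^{r_3}_t L^{p_3}_x}$ covers the case $\alpha_3=0$. Assembling the three factors yields
\begin{equation*}
\int_0^T\!\!\int_{\R^3}\chi\,\bigl(|\mathcal{A}^\eps[\phi^1,\phi^2]|+|\mathcal{B}^\eps[\phi^1,\phi^2]|+|\mathcal{C}^\eps[\phi^1,\phi^2]|\bigr)\,|\nabla^i\phi^3_\eps|\,dx\,dt \lesssim \eps^{\alpha_1+\alpha_2+\alpha_3-i},
\end{equation*}
with a multiplicative constant controlled by the relevant Besov (resp.\ $L^{r_j}L^{p_j}$) seminorms of the $\phi^j$'s on an $\eps$-neighbourhood of the support of $\chi$.

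In the supercritical case $\alpha_1+\alpha_2+\alpha_3>i$, this already delivers convergence to zero as $\eps\to 0$. The main difficulty will be the critical case $\alpha_1+\alpha_2+\alpha_3=i$, where the $O(1)$ bound must be upgraded to an $o(1)$ one by invoking the $c_0$ hypothesis on some distinguished index $j_0$. In the first CET piece one replaces $\sup_{y}|y|^{-\alpha_{j_0}}\|\delta_y\phi^{j_0}\|_{L^{r_{j_0}}_t L^{p_{j_0}}_x}$ by its restriction to $|y|\leq C\eps$, which tends to $0$ by the very definition of $\widetilde{L}^{r_{j_0}}(0,T;\dot{B}^{\alpha_{j_0}}_{p_{j_0},c_0}(\R^3))$; the second CET piece is already confined to such scales; and if the distinguished index happens to be $j_0=3$ (with $\alpha_3>0$), one refines the Bernstein estimate via a high/low frequency decomposition at scale $1/\eps$ in order to extract an $\eps^{\alpha_3-i}\cdot o(1)$ gain. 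In the fully degenerate subcase $\alpha_1+\alpha_2+\alpha_3=i=0$, the assumption constrains $j_0$ to lie in $\{1,2\}$ and the $o(1)$ gain is obtained directly from the strong convergence $\phi^{j_0}_\eps\to\phi^{j_0}$ in $L^{r_{j_0}}_t L^{p_{j_0}}_x$ (valid since $r_{j_0}, p_{j_0}<\infty$). The careful book-keeping of these various degenerate subcases --- especially the preservation of the $c_0$ information through Lemma~\ref{bern} and the Bernstein decomposition --- is the part I expect to require the most attention; the backbone of the proof (CET identity, H\"older, Bernstein) is standard.
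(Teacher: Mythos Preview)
Your proposal is essentially correct and follows the same overall strategy as the paper: the CET commutator identity, localisation by a cut-off $\theta\equiv 1$ on $\operatorname{supp}\chi$, H\"older in space then time with exponents $(p_j)$ and $(r_j)$, and power counting $\eps^{\alpha_1+\alpha_2+\alpha_3-i}$ with an upgrade from $O$ to $o$ at the distinguished index.

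One point is worth correcting. You propose to obtain $\|\nabla^i\phi^3_\eps\|_{L^{r_3}_tL^{p_3}_x}\lesssim \eps^{\alpha_3-i}$ (and the $o(\eps^{\alpha_3-i})$ refinement under $c_0$) ``from Lemma~\ref{bern} through a dyadic decomposition of $\phi^3$, using that $\psi^\eps$ localises frequencies at scale $\lesssim 1/\eps$''. This is not what Lemma~\ref{bern} provides: that lemma is a Besov embedding, not a mollifier estimate. Moreover $\psi\in C^\infty_{\rm c}$ does not give a sharp frequency cut-off, so a Littlewood-Paley argument, while feasible, is heavier than necessary. The paper (Lemma~\ref{lowf}) obtains these bounds directly and much more simply: since $\int\nabla\psi=0$, one writes
\[
\nabla u_\eps(t,x)=\frac{1}{\eps}\int(\nabla\psi)^\eps(y)\bigl(u(t,x-y)-u(t,x)\bigr)\,dy,
\]
inserts $|y|^\alpha\cdot|y|^{-\alpha}$, and reads off $O(\eps^{\alpha-1})$ (respectively $o(\eps^{\alpha-1})$ under $c_0$) immediately; the second derivative is handled identically. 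The same mechanism gives Lemma~\ref{approx} for $\|\phi^j-\phi^j_\eps\|$. With these two lemmas in hand the proof of Lemma~\ref{cruciallemma} becomes a one-line application of H\"older, exactly as you outlined, noting that among the three factors at least one carries a little-$o$.
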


Before proving Lemma \ref{cruciallemma}, we start with a few 
preliminary results. 
\begin{Lemma} \label{approx}
Let $\alpha \in (0,1)$, and $p,r \in [1,\infty]$. 
\begin{enumerate}
\item For all $u\in \widetilde{L}^r (0,T ; \dot{B}^\alpha_{p,\infty} (\R^3))$, 
\begin{equation}
\label{DeuzO}
\| u - u_\eps \|_{L^{r} ( (0,T) ; L^{p} ( \R^3 ))} =  O(\eps^\alpha ) .
\end{equation}
\item For all $u \in \widetilde{L}^r (0,T ; \dot{B}^\alpha_{p,c_0} (\R^3))$, 
\begin{equation}
\label{Deuz}
\| u - u_\eps \|_{L^{r} ( (0,T) ; L^{p} ( \R^3 ))} =  o(\eps^\alpha ) .
\end{equation}
\item If $r,p \in [1,\infty)$,  then 
for all $u \in L^{r} (0,T ; L^{p} (\R^3))$, 
\begin{equation}
\label{DeuzLp}
\|   u - u_\eps  \|_{L^{r} ( (0,T) ; L^{p} ( \R^3 ))} =  o(1) .
\end{equation}
\end{enumerate}
\end{Lemma}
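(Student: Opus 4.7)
All three parts rest on the same starting identity. Since $\int \psi^\eps = 1$,
$$u_\eps(t,x)-u(t,x) = \int_{\R^3}\psi^\eps(y)\bigl[u(t,x-y)-u(t,x)\bigr]dy,$$
so applying Minkowski's inequality first in $L^p_x$ and then in $L^r_t$ (after Fubini) produces the workhorse bound
$$\|u_\eps-u\|_{L^r(0,T;L^p(\R^3))} \leq \int_{\R^3}\psi^\eps(y)\,\|u(\cdot,\cdot-y)-u(\cdot,\cdot)\|_{L^r(0,T;L^p(\R^3))}\,dy.$$
All three parts then follow by exploiting the appropriate hypothesis on $u$ inside this integral, using that $\psi$ has compact support, say in $B(0,R)$, so that $\psi^\eps$ is supported in $B(0,R\eps)$.

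For Part 1, I rewrite the $L^r_tL^p_x$ norm under the integral as $|y|^\alpha\|f_{\alpha,p}[u](\cdot,y)\|_{L^r(0,T)}$, which is bounded by $|y|^\alpha \|u\|_{\widetilde{L}^r(0,T;\dot{B}^\alpha_{p,\infty}(\R^3))}$. On the support of $\psi^\eps$ one has $|y|^\alpha \leq (R\eps)^\alpha$, while $\int\psi^\eps=1$, and this yields the stated $O(\eps^\alpha)$ bound. For Part 2, the change of variables $y=\eps z$ recasts the workhorse bound as
$$\|u_\eps-u\|_{L^r(0,T;L^p(\R^3))} \leq \eps^\alpha \int_{\R^3}\psi(z)\,|z|^\alpha\,\|f_{\alpha,p}[u](\cdot,\eps z)\|_{L^r(0,T)}\,dz.$$
The $c_0$-hypothesis says $\|f_{\alpha,p}[u](\cdot,\eps z)\|_{L^r(0,T)}\to 0$ as $\eps\to 0$ for every fixed $z\neq 0$, while on the compact support of $\psi$ the integrand is uniformly dominated by the integrable function $\psi(z)|z|^\alpha\|u\|_{\widetilde{L}^r(0,T;\dot{B}^\alpha_{p,\infty}(\R^3))}$, so dominated convergence closes the bound to $o(\eps^\alpha)$.

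For Part 3, I would proceed by density: given $\eta>0$, pick $v\in C^\infty_{\rm c}((0,T)\times\R^3)$ with $\|u-v\|_{L^r(0,T;L^p(\R^3))}<\eta/3$, which is possible precisely because $r,p<\infty$. Applying Young's convolution inequality in the spatial variable slice-wise gives $\|u_\eps-v_\eps\|_{L^r(0,T;L^p(\R^3))} \leq \|u-v\|_{L^r(0,T;L^p(\R^3))} < \eta/3$, and for smooth, compactly supported $v$ the uniform convergence $v_\eps\to v$ on a fixed compact set (with a uniform compactly supported bound) yields $\|v_\eps-v\|_{L^r(0,T;L^p(\R^3))}<\eta/3$ for $\eps$ small, so the triangle inequality finishes the argument. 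The main obstacle is really a conceptual one rather than a technical one: recognizing that the simple finite-difference-integral identity above bypasses any need for Littlewood-Paley machinery in the homogeneous setting, and that in Part 2 the apparent $\eps^\alpha$ gain has to be genuinely strict — this requires the $c_0$-hypothesis to convert a uniform bound into a dominated-convergence conclusion.
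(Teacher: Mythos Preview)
Your proof is correct and, for Parts 1 and 2, essentially identical to the paper's: the same finite-difference identity, the same Minkowski bound, and the same use of the support size of $\psi^\eps$ (the paper writes Part 2 without the explicit substitution $y=\eps z$, but the content is the same).

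For Part 3 you take a slightly different, though equally standard, route. The paper stays with the workhorse bound $\|u-u_\eps\|_{L^r_tL^p_x}\le\int\psi^\eps(y)\|\delta_y u\|_{L^r_tL^p_x}\,dy$ and applies dominated convergence directly: $\|\delta_y u(t,\cdot)\|_{L^p}\le 2\|u(t,\cdot)\|_{L^p}$ gives an integrable dominator in $t$, and continuity of translation in $L^p$ (for $p<\infty$) gives pointwise convergence to $0$ as $y\to 0$, so the integral over the shrinking support of $\psi^\eps$ tends to $0$. Your density argument via $C^\infty_{\rm c}$ is of course valid as well; the paper's version has the mild advantage of keeping all three parts within the same single estimate, while yours is perhaps the more commonly taught proof of mollifier convergence.
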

\begin{proof}
We begin with the proof of \eqref{DeuzO}.
We use that 
\begin{equation*}
u_\eps (t,x) - u (t,x) =  
\int_{\R^3} \psi^{\eps} (y)  ( u(t,x-y) - u(t,x) ) dy ,
\end{equation*}
so that, with $\delta_y u(t,x) = u(t,x-y) - u(t,x)$, 
\begin{eqnarray} \label{u-uepsLrLp}
\| u - u_\eps \|_{L^r(0,T;L^{p} (\R^3))} \leq  
 \int_{\R^3} \psi^{\eps} (y)  \| \delta_y u \|_{L^r(0,T;L^{p} (\R^3))} dy .
\end{eqnarray}
Now, this implies 
$$
\| u - u_\eps \|_{L^r(0,T;L^{p} (\R^3))} \leq  
\left( \int_{\R^3} \psi^{\eps} (y) |y|^\alpha dy \right) 
\| u \|_{\widetilde{L}^r(0,T;B^\alpha_{p,\infty} (\R^3))} .
$$
According to the size of the support of $\psi^\eps$, 
this proves \eqref{DeuzO}. To obtain \eqref{Deuz}, we write 
\begin{eqnarray*}
\eps^{-\alpha} \| u - u_\eps \|_{L^r(0,T;L^{p} (\R^3))} 
\leq \int_{\R^3} \psi^{\eps} (y) 
\left( \frac{|y|}{\eps} \right)^\alpha \| f_{\alpha,p}[u](y) \|_{L^r(0,T)} dy 
\leq C \int_{\R^3} \psi^{\eps} (y) \| f_{\alpha,p}[u](y) \|_{L^r(0,T)} dy , 
\end{eqnarray*}
and again, the fact that supp$(\psi^\eps)$ has size $\eps$ yields the result. 

To prove \eqref{DeuzLp}, we come back to \eqref{u-uepsLrLp}. 
For all $t \in (0,T)$, 
$\| \delta_y u(t,\cdot) \|_{L^{p} (\R^3)} \leq 
2 \| u(t,\cdot) \|_{L^{p} (\R^3)}$, and since $p<\infty$, 
$\| \delta_y u (t,\cdot) \|_{L^{p} (\R^3)} \tendlorsque{y}{0} 0$, 
so that Lebesgue's dominated convergence Theorem concludes. 
\end{proof}
\begin{Lemma}
\label{lowf}
Let $\alpha \in (0,1)$, and $p,r \in [1,\infty]$.  
\begin{enumerate}
\item For all $u \in \widetilde{L}^r (0,T ; \dot{B}^\alpha_{p,\infty} (\R^3))$,
\begin{equation}
\label{OPI}
\| \nabla u_\eps \|_{ L^{r} (0,T ;L^{p} (\R^3))} = O(\eps^{\alpha - 1} )
\quad \text{and} \quad  
\| \nabla^{2} u_\eps  \|_{L^r (0,T ;L^{p} (\R^3))} = O(\eps^{\alpha - 2} ) .
\end{equation}
\item For all $u \in \widetilde{L}^{r} (0,T ; \dot{B}^\alpha_{p,c_{0}} (\R^3))$, 
\begin{equation}
\| \nabla u_\eps \|_{ L^r (0,T ;L^{p} (\R^3))} = o(\eps^{\alpha - 1} )
\quad \text{and} \quad  
\| \nabla^{2} u_\eps \|_{L^r (0,T ;L^{p}  (\R^3))} = o(\eps^{\alpha - 2} ) .
\end{equation}
\item If $r,p \in [1,\infty)$, then 
for all $u \in L^{r} ((0,T) ; L^{p} (\R^3))$, 
\begin{equation}
\| \nabla u_\eps \|_{ L^r (0,T ;L^{p} (\R^3))} = o(1/\eps)
\quad \text{and} \quad  
\| \nabla^{2} u_\eps \|_{L^r (0,T ;L^{p}  (\R^3))} = o(1/\eps^2) .
\end{equation}
\end{enumerate}
\end{Lemma}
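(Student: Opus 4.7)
The plan is to exploit the fact that, because $\psi^\eps$ is compactly supported, every first or second derivative of the mollifier integrates to zero: $\int_{\R^3} \nabla \psi^\eps \, dy = 0$ and $\int_{\R^3} \nabla^2 \psi^\eps \, dy = 0$. Writing $\delta_y u(t,x) := u(t,x-y) - u(t,x)$, this yields the identities
\begin{equation*}
\nabla u_\eps(t,x) = \int_{\R^3} \nabla \psi^\eps(y) \, \delta_y u(t,x) \, dy, \qquad \nabla^2 u_\eps(t,x) = \int_{\R^3} \nabla^2 \psi^\eps(y) \, \delta_y u(t,x) \, dy ,
\end{equation*}
and the change of variables $y = \eps z$ gives the scaling
$\int_{\R^3} |\nabla^i \psi^\eps(y)| |y|^\beta \, dy = C_{i,\beta} \, \eps^{\beta - i}$ for $i = 1, 2$ and $\beta \geq 0$.

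For the first part of the lemma, I would apply Minkowski's inequality to move the $L^r(0,T; L^p(\R^3))$-norm under the $y$-integral:
\begin{equation*}
\| \nabla^i u_\eps \|_{L^r(0,T; L^p(\R^3))} \leq \int_{\R^3} |\nabla^i \psi^\eps(y)| \, \| \delta_y u \|_{L^r(0,T; L^p(\R^3))} \, dy ,
\end{equation*}
and then bound $\| \delta_y u \|_{L^r(0,T; L^p(\R^3))} \leq |y|^\alpha \| u \|_{\widetilde{L}^r(0,T;\dot{B}^\alpha_{p,\infty}(\R^3))}$, which together with the scaling above immediately produces the $O(\eps^{\alpha-i})$ bound. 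For the second part I would mimic the proof of \eqref{Deuz} in Lemma~\ref{approx}: dividing by $\eps^{\alpha-i}$, the integrand becomes $\eps^i |\nabla^i \psi^\eps(y)| (|y|/\eps)^\alpha \| f_{\alpha,p}[u](\cdot,y) \|_{L^r(0,T)}$, which is uniformly bounded in $\eps$, supported in $\{|y| = O(\eps)\}$; since the $c_0$-condition on $u$ forces $\| f_{\alpha,p}[u](\cdot,y) \|_{L^r(0,T)} \to 0$ as $y \to 0$, the dominated convergence theorem yields the $o$-bound.

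For the third part, Young's inequality alone gives only $O(\eps^{-i})$ via $\| \nabla^i \psi^\eps \|_{L^1} = C \eps^{-i}$, so I would upgrade this to $o(\eps^{-i})$ by appealing to the density of $C^\infty_{\rm c}((0,T)\times\R^3)$ in $L^r(0,T;L^p(\R^3))$, which is available precisely because $r, p < \infty$. For any approximating sequence $u_k \in C^\infty_{\rm c}$ with $u_k \to u$ in $L^r(0,T;L^p(\R^3))$, one writes, using $\psi^\eps * \nabla^i u_k = \nabla^i \psi^\eps * u_k$ for smooth $u_k$,
\begin{equation*}
\eps^i \| \nabla^i u_\eps \|_{L^r(0,T;L^p(\R^3))} \leq C \, \| u - u_k \|_{L^r(0,T;L^p(\R^3))} + \eps^i \, \| \nabla^i u_k \|_{L^r(0,T;L^p(\R^3))} ,
\end{equation*}
and concludes by letting $\eps \to 0$ first and $k \to \infty$ second.

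The conceptual point that makes everything work in the first two parts, as in Lemma~\ref{approx}, is the mean-zero property of $\nabla^i \psi^\eps$, which rescues one or two powers of $\eps$ by replacing $u$ with the increment $\delta_y u$ in the convolution. Without this cancellation one would only get the trivial bound $O(\eps^{-i})$. In part three, where no Besov-type modulus of continuity is available, this gain cannot occur and the best one can do is a soft density argument, which is why the restriction $r, p < \infty$ is essential.
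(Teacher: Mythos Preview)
Your proposal is correct and follows essentially the same approach as the paper: the key idea in both is that the mean-zero property of $\nabla^i\psi^\eps$ lets you replace $u$ by the increment $\delta_y u$ in the convolution, after which the Besov modulus and the scaling of $\int|\nabla^i\psi^\eps(y)|\,|y|^\alpha\,dy$ give the result. The only minor difference is in part~3: the paper (via ``the other cases being similar'' to Lemma~\ref{approx}) would argue directly from continuity of translations in $L^p$ and dominated convergence, whereas you use a density argument; both are standard and equivalent here.
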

\begin{proof}
Let us only prove the part of \eqref{OPI} regarding $\nabla u_\eps$, 
the other cases being similar. We start from 
\begin{equation*}
\nabla  u_\eps (t,x) =  \frac{1}{\eps} \int_{\R^3} \tilde{\psi}^{\eps} (y)  u(t,x-y) dy ,
\end{equation*}
where
\begin{equation*}
\tilde{\psi}^{\eps} (y) :=  \eps^{-3} \tilde{\psi} (\eps^{-1} x) ,  
\text{ whith } \tilde{\psi} := \nabla \psi .
\end{equation*}
Since the mean value of $\tilde{\psi}$ over $\R^3$ vanishes, we get
\begin{eqnarray*}
\nabla  u_\eps  (t,x) &=&   \frac{1}{\eps} 
\int_{\R^3} \tilde{\psi}^{\eps} (y)  (u(t,x-y) -u(t,x)) dy ,
\\                           
&=&   \frac{1}{\eps} \int_{\R^3}  | y |^{\alpha} \tilde{\psi}^{\eps} (y)   
\frac{u(t,x-y) -u(t,x)}{| y |^{\alpha}} dy ,
\end{eqnarray*}
and we conclude as in the proof of \eqref{DeuzO} above. 
\end{proof}

Let us now prove Lemma \ref{cruciallemma}.
\begin{proof}[Proof of Lemma \ref{cruciallemma}]
In order to prove the part of the claim concerning the vector product 
it  suffices to observe that $\mathcal{B}^{\eps} [\phi^1,\phi^2]$ may 
be written 
\begin{equation*}
\mathcal{B}^{\eps} [\phi^1,\phi^2] = r^{\eps} [\phi^1,\phi^2]  
- (\phi^1 - \phi^1_{\eps} ) \wedge (\phi^2 - \phi^2_{\eps} ) , 
\end{equation*}
where
\begin{equation*}
r^{\eps} [\phi^1,\phi^2] (x):= \int_{\R^3} \psi^{\eps} (y)  
\delta_y \phi^1 (x) \wedge \delta_y \phi^2 (x) \, dy .
\end{equation*}
Now, if $\chi \in C^\infty_{\rm c}((0,T)\times\R^3)$, there exists 
some nonnegative $\theta \in C^\infty_{\rm c}((0,T)\times\R^3)$ 
taking the value $1$ on supp$(\chi)$, so that $\chi=\chi\theta^3$. 
Then, we have 
\begin{gather*}
\int_{0}^{T} \int_{\R^3} \chi |r^{\eps} [\phi^1,\phi^2] |  \, 
| \nabla^{i}  \phi^3_{\eps}  | \,  dx \, dt = 
\\ \int_{(0,T)\times\R^3} \chi(t,x) 
\left| \int_{\R^3} \psi^{\eps} (y) ( (\theta \delta_y \phi^1) (t,x) ) 
\wedge ( (\theta \delta_y \phi^2) (t,x) ) \, dy \right| 
| (\theta \nabla^{i}  \phi^3_{\eps})(t,x)  | \,  dt \, dx .
\end{gather*}
But $\theta \delta_y \phi^1 = \delta_y (\theta\phi^1) - \phi^1 \delta_y \theta$.  
Since supp$(\psi^\eps)$ is contained in some ball of size $\eps$, and $\theta$ 
takes the value $1$ on supp$(\chi)$, for $\eps$ small enough, with $y$ in 
supp$(\psi^\eps)$, $\delta_y \theta$ vanishes on supp$(\chi)$: 
$$
\text{for } \eps \text{ small enough,} \quad 
\int_{0}^{T} \int_{\R^3} \chi |r^{\eps} [\phi^1,\phi^2] |  \, 
| \nabla^{i}  \phi^3_{\eps}  | \,  dx \, dt = 
\int_{0}^{T} \int_{\R^3} \chi |r^{\eps} [\theta\phi^1,\theta\phi^2] |  \, 
| \nabla^{i} (\theta \phi^3_{\eps}) | \,  dx \, dt.
$$

Then, use H\"older's inequality, estimating $r^{\eps} [\theta\phi^1,\theta\phi^2]$  
as in the proof of Lemma~\ref{approx}, and combine with Lemma~\ref{approx} 
and Lemma~\ref{lowf}, noticing that among the estimates, 
at least one of the $O$'s is a $o$. 
\end{proof}
%

%%%%%%%%%%
%%%%%%%%%%
%%%%%%%%%%
%%%%%%%%%%
%%%%%%%%%%

\section{Vanishing of anomalous energy dissipation for the MLL equations:  Proof of Part ii) of Theorem \ref{LLMonsag}}
\label{Va1}

In order to conclude the proof of Theorem \ref{LLMonsag}, it is sufficient 
to  prove that for all $\chi \in C^\infty_{\rm c}((0,T)\times\R^3)$, 
\begin{equation}
\label{tri}
\int_{(0,T) \times \R^3} \chi  d^{\mathfrak{a},\eps}_{\rm MLL} \, dx \, dt  
\rightarrow 0 , \quad \text{as } \eps \rightarrow 0 .
\end{equation}
For the sequel, we fix such a function $\chi$. 
We expand the local  anomalous energy dissipation $d^{\mathfrak{a},\eps}_{\rm MLL}$, defined  in \eqref{eMLLM}, into
\begin{eqnarray*}
d^{\mathfrak{a},\eps}_{\rm MLL} &=& 
- \mathcal{B}^{\eps} [m ,  \dt m -2 H ] \cdot ( \dt m_\eps  -2 H_\eps )
+ 2 \mathcal{B}^{\eps} [m ,   \Delta m] \cdot ( \dt m_\eps -2 H_\eps )
\\ &&+ 2 \mathcal{B}^{\eps} [m ,  \dt m -2 H]  \cdot \Delta m_\eps 
- 4 \mathcal{B}^{\eps} [m ,   \Delta m]  \cdot \Delta m_\eps ,
\end{eqnarray*}
which we denote 
$$
d^{\mathfrak{a},\eps}_{\rm MLL}  = F^\eps_1 [m,H] + F^\eps_2 [m,H]+ F^\eps_3 [m,H]+ F^\eps_4 [m].
$$
Now it suffices to proceed as follows. 
\begin{Remark} \label{interpol}
When $m \in \widetilde{L}^{3}_t \dot{B}^\alpha_{p,\infty} \cap L^\infty_{t,x}$ 
with $\alpha \in (3/2,2)$ and $p = 9/(3\alpha-1)$, by interpolation 
$m$ belongs to $\widetilde{L}^{4}_t \dot{B}^\beta_{q,\infty}$ with 
$\beta := 3\alpha/4 \in (9/8,3/2)$ and $q = 12/(4\beta-1)$. 
This follows from the injection 
$L^\infty ( (0,T) \times \R^3 ) \hookrightarrow 
\widetilde{L}^3 (0,T; B^{0}_{\infty,\infty} (\R^3))$ 
and Theorem $6.4.5$ in  \cite{BergLof}.
\end{Remark} 
\subparagraph{First term.}
Since $ \dt m -2 H \in L^{2} ((0,T) \times  \R^3 )$ and 
$m \in L^{\infty} ((0,T) \times  \R^3 )$,
we deduce from Lemma \ref{cruciallemma} (with $i=0$, $\phi^1=m$, 
$\phi^2 = \phi^3 = \dt m - 2H$) that 
\begin{equation}
\label{tri1}
\int_{(0,T) \times \R^3} F^\eps_1 [m,H]  \chi \, dx \, dt  \rightarrow 0,  
\text{ as } \eps \rightarrow 0 .
\end{equation}
\subparagraph{Second term.}
For the second term we use first an integration by parts to get
\begin{eqnarray*}
\int_{(0,T) \times \R^3} \chi F^\eps_2 [m,H]  \, dx \, dt &=&
- 2   \sum_k  \int_{(0,T) \times \R^3} \chi
\mathcal{B}^{\eps} [m , \partial_k  m] \cdot 
\partial_k   ( \dt m_\eps -2 H_\eps ) 
\, dx \, dt 
\\ &&- 2   \sum_k  \int_{(0,T) \times \R^3} (\partial_k \chi )
\mathcal{B}^{\eps} [m , \partial_k  m] \cdot 
   ( \dt m_\eps -2 H_\eps ) 
\, dx \, dt
\\ && \quad  := I^\eps_1 + I^\eps_2 . 
\end{eqnarray*}
Then, one uses  Lemma \ref{cruciallemma} 
with $i=1$, 
\begin{eqnarray*}
\phi^3 = \dt m - 2 H \in L^2  ((0,T) \times  \R^3 ), \quad
\phi^2 = \nabla m \in \widetilde{L}^4 (0,T ;\dot{B}^{\beta-1}_{q,\infty}( \R^3) )_{\rm loc} ,
\\ \text{ and } \phi^1 = m \in \widetilde{L}^4 (0,T ; \dot{B}^\beta_{q,\infty} ( \R^3) )_{\rm loc} \hookrightarrow 
\widetilde{L}^4 (0,T ;\dot{B}^{\tilde\beta}_{\tilde{q},\infty} ( \R^3) )_{\rm loc} 
\end{eqnarray*}
 for all $\tilde q \geq q$, 
with $ \tilde\beta = \beta - 3 
\left( \frac{1}{q} - \frac{1}{\tilde{q}} \right)$. 
Under this last condition, $q = \frac{12}{4\beta-1}$ 
is equivalent to $ \tilde q = \frac{12}{4\tilde\beta-1}$. 
It is then required that 
$ \frac{1}{\tilde{q}}+\frac{1}{q}=\frac{1}{2}$, 
which leads, because of the relations $ 
q = \frac{12}{4\beta-1}$, $ \tilde q = \frac{12}{4\tilde\beta-1}$, 
to the constraint $\tilde\beta=2-\beta$. Note that, when $\beta$ belongs 
to $(1,2)$, $\tilde\beta$ and $\beta-1$ belong to $(0,1)$; furthermore, 
in this case, we have $q < 4 < \tilde q$. Now, the remaining requirement from 
Lemma \ref{cruciallemma} is $\tilde\beta+(\beta-1)\geq1$, which is fulfilled 
(since $\tilde\beta+(\beta-1)=1$). Hence, 
we get that $ I^\eps_1  \rightarrow 0$ as $\eps \rightarrow 0 $.

Similarly,  one uses  Lemma \ref{cruciallemma} with the same functions  $\phi^1$, $\phi^2$, $\phi^3$, but this time with $i=0$ and $\partial_k \chi $ instead of $\chi$,
to get that $ I^\eps_2  \rightarrow 0$ as $\eps \rightarrow 0 $. Thus
\begin{equation}
\label{tri2}
\int_{(0,T) \times \R^3}  \chi  F^\eps_2 [m,H]  \, dx \, dt  \rightarrow 0,  
\text{ as } \eps \rightarrow 0 .
\end{equation}
\subparagraph{Third term.}
Now, 
\begin{equation*}
\int_{(0,T) \times \R^3}  \chi  F^\eps_3 [m,H]  \, dx \, dt = 
2   \sum_k  \int_{(0,T) \times \R^3}  \chi 
\mathcal{B}^{\eps} [m , \dt m_\eps -2 H_\eps] \cdot 
\partial_k   ( \partial_k m ) 
\, dx \, dt ,
\end{equation*}
so that we apply again Lemma \ref{cruciallemma} 
with $i=1$, $\phi^3 = \nabla m \in  \widetilde{L}^4 (0,T ; \dot{B}^{\beta -1}_{q,\infty} ( \R^3) )_{\rm loc}$, 
$\phi^2 = \dt m - 2 H \in L^2 ((0,T) \times  \R^3 )$ 
and $\phi^1 = m \in \widetilde{L}^4 (0,T ; \dot{B}^\beta_{q,\infty} ( \R^3) )_{\rm loc} \hookrightarrow 
\widetilde{L}^4 (0,T ; \dot{B}^{\tilde\beta}_{\tilde{q},\infty} ( \R^3) )_{\rm loc}$, exactly as for the 
second term $F^2_\eps  [m,H] $, to obtain 
\begin{equation}
\label{tri3}
\int_{(0,T) \times \R^3}  \chi  F^\eps_3 [m,H]  \, dx \, dt  \rightarrow 0,  
\text{ as } \eps \rightarrow 0 .
\end{equation}
\subparagraph{Fourth term.}
Finally we use again an integration by parts to get
\begin{eqnarray*}
\int_{(0,T) \times \R^3}  \chi  F^\eps_4 [m ]  \, dx \, dt &=& 
4 \sum_k   \int_{(0,T) \times \R^3}  \chi  \mathcal{B}^{\eps} [m , \partial_k  m]  
\cdot \Delta \partial_k m_\eps  \, dx \, dt
\\ &&+ 4 \sum_k   \int_{(0,T) \times \R^3}  (\partial_k \chi)  \mathcal{B}^{\eps} [m , \partial_k  m]  
\cdot \Delta  m_\eps  \, dx \, dt  .
\end{eqnarray*}
Let us start with the first term of the right hand side.
Here, we invoke Lemma \ref{cruciallemma} with $i=2$, so that the previous 
$\widetilde{L}^4 \dot{B}^\beta_{q,\infty}$-regularity for $m$ is useless. Instead, we take 
$\phi^1 = m \in \widetilde{L}^3 \dot{B}^\alpha_{p,c_0} \hookrightarrow 
\widetilde{L}^3 \dot{B}^{\tilde\alpha}_{\tilde p,c_0}$, 
$\phi^2 = \phi^3 = \nabla m \in \widetilde{L}^3 \dot{B}^{\alpha-1}_{p,c_0}$. 
We have the constraints 
$$
\tilde\alpha = \alpha - 3 \left( \frac{1}{p}-\frac{1}{\tilde p} \right) 
< \alpha, \quad \frac{1}{p}+\frac{2}{\tilde p} = 1 \quad \text{and} \quad 
\tilde\alpha + 2(\alpha-1) \geq 2. 
$$
Choosing $\tilde\alpha + 2(\alpha-1) = 2$ is equivalent to the relation 
$ p = \frac{9}{3\alpha-1}$. Furthermore, imposing 
$\tilde\alpha = 4 - 2\alpha \in (0,1)$ is equivalent to $\alpha\in(3/2,2)$. 
This is enough to ensure 
\begin{equation}
\label{tri4}
\int_{(0,T) \times \R^3}  \chi   F^\eps_4 [m ]  \, dx \, dt  \rightarrow 0,  
\text{ as } \eps \rightarrow 0 .
\end{equation}

Gathering
\eqref{tri1}-\eqref{tri2}-\eqref{tri3}-\eqref{tri4}
yields \eqref{tri}.

%%%%%%%%%%
%%%%%%%%%%
%%%%%%%%%%
%%%%%%%%%%
%%%%%%%%%%

\section{Vanishing of anomalous dissipations for the HMHD equations: 
Proof of Part ii) of Theorem \ref{Hallonsaghel} and Theorem~\ref{Hallonsagen}}
\label{Va2}
\subsection{No anomalous magneto-helicity dissipation: Proof of Part ii) 
of Theorem \ref{Hallonsaghel}}
In order to prove Part ii) of Theorem \ref{Hallonsaghel}, it is sufficient 
to  prove that for all $\chi \in C^\infty_{\rm c}((0,T)\times\R^3)$, 
\begin{equation}
\label{triBis}
\int_{(0,T) \times \R^3} \chi   d_m^{\mathfrak{a},\eps} \, dx \, dt  
\rightarrow 0 , \quad \text{as } \eps \rightarrow 0 .
\end{equation}
For the sequel, we fix such a function $\chi$. 
Let us  recall the definition of  $d_m^{\mathfrak{a},\eps}$ given in \eqref{eli}:
\begin{equation*}
d_m^{\mathfrak{a},\eps}  := 
2   A_{\eps} \cdot \curl    \mathcal{B}^{\eps} [u,B ]   
-  2   A_{\eps} \cdot  \curl     \Div   \mathcal{C}^{\eps} [B,B] =: 2 T_1^{\eps } [u,A] -2 T_2^{\eps } [A] .  
\end{equation*}

We use 
\eqref{formu} and an integration by parts to obtain:
\begin{eqnarray}
\label{triter}
\int_{(0,T) \times \R^3} \chi  T_1^{\eps } [u,A]   \, dx \, dt  
&=& \int_{(0,T) \times \R^3} \chi  B_{\eps} \cdot   \mathcal{B}^{\eps} [u,B ]  \, dx \, dt  
\\ \nonumber&& - \int_{(0,T) \times \R^3} \big(  \mathcal{B}^{\eps} [u,B ] \wedge A_{\eps} ) \cdot \nabla \chi   \, dx \, dt  .
\end{eqnarray}

On the other hand, we apply \eqref{formu} to get:
\begin{eqnarray}
\nonumber
\int_{(0,T) \times \R^3} \chi  T_2^{\eps } [A]   \, dx \, dt  
&=&  
\int_{(0,T) \times \R^3} \chi  \curl   A_{\eps} \cdot    \Div   \mathcal{C}^{\eps} [B,B]  \, dx \, dt
\\ \nonumber&& + \int_{(0,T) \times \R^3} \chi    \Div \Big(  ( \Div   \mathcal{C}^{\eps} [B,B] ) \wedge   A_{\eps}   \Big) \, dx \, dt .
\end{eqnarray}
An integration by parts yields 
\begin{eqnarray}
 \nonumber 
 \int_{(0,T) \times \R^3} \chi  T_2^{\eps } [A]   \, dx \, dt  
 &=& -  \int_{(0,T) \times \R^3}  \chi   ( \nabla  \curl   A_{\eps} ) :  \mathcal{C}^{\eps} [B,B]  \, dx \, dt 
\\ \nonumber&&- \int_{(0,T) \times \R^3} (  \mathcal{C}^{\eps} [B,B]    \curl   A_{\eps} )  \cdot \nabla   \chi   \, dx \, dt
\\ \nonumber && - \int_{(0,T) \times \R^3} \big( ( \Div \mathcal{C}^{\eps} [B,B]  )   \wedge   A_{\eps} \big)  \cdot \nabla   \chi   \, dx \, dt ,
\end{eqnarray}
so that we  obtain finally:
\begin{eqnarray}
 \nonumber
 \int_{(0,T) \times \R^3} \chi  T_2^{\eps } [A]   \, dx \, dt  
&=& -  \int_{(0,T) \times \R^3}  \chi   
( \nabla  B_{\eps} ) :  \mathcal{C}^{\eps} [B,B]  \, dx \, dt 
\\ \nonumber&&- \int_{(0,T) \times \R^3} 
(  \mathcal{C}^{\eps} [B,B] B_{\eps} )  \cdot \nabla   \chi   \, dx \, dt
\\ \nonumber && + \int_{(0,T) \times \R^3}  
\Big( ( \mathcal{C}^{\eps} [B,B] \nabla  ) \wedge   
A_{\eps} \Big)  \cdot \nabla   \chi   \, dx \, dt 
\\ \label{posspo}
&& + \sum_{j} \int_{(0,T) \times \R^3}   
\Big(  ((\mathcal{C}^{\eps} [B,B  ])_{ij})_{i} \wedge 
A_{\eps} \Big) \cdot \partial_{j} \nabla \chi  \, dx \, dt .
\end{eqnarray}
To get the vanishing of the $T_1^\eps$ term given by \eqref{triter} 
as $\eps$ goes to zero, observe that the regularity 
given by Theorem \ref{ADFL} and interpolation theory 
suffices, applying Lemma \ref{cruciallemma} with $i=0$ 
and the $\phi^j$'s equal to $u$, $B$ or $A$, all belonging 
to $L^\infty(0,T;L^2(\R^3)) \cap L^2(0,T;H^1(\R^3)) 
\hookrightarrow L^3(0,T;L^3(\R^3))$. 

Concerning the last three terms produced by $T^\eps_2$ in \eqref{posspo}, 
use again Lemma \ref{cruciallemma} with $i=0$, $\phi^1=\phi^2=B$, 
and $\phi^3$ being $B$, $\nabla A$ or $A$, which all belong to 
$L^3(0,T;L^3(\R^3))$. The first term in \eqref{posspo} is the one 
for which the most regularity is needed: in Lemma \ref{cruciallemma}, 
take $i=1$ and $\phi^1=\phi^2=\phi^3=B \in 
\widetilde{L}^3 (0,T;\dot{B}^{1/3}_{3,c_0}(\R^3))_{\rm loc}$.

%%%%%%%%%%
%%%%%%%%%%
%%%%%%%%%%
%%%%%%%%%%
%%%%%%%%%%

\subsection{No anomalous energy dissipation: Proof of Part ii) 
of Theorem \ref{Hallonsagen}}

In order to prove Part ii)  of Theorem \ref{Hallonsagen}, 
we consider $\chi \in C^\infty_{\rm c}((0,T)\times\R^3)$, 
and we prove that 
\begin{equation}
\label{triFin}
\int_{(0,T) \times \R^3} \chi  d_{\rm HMHD}^{\mathfrak{a},\eps}  \, dx \, dt  
\rightarrow 0 , \quad \text{as } \eps \rightarrow 0 .
\end{equation}
We recall that 
\begin{gather*}
d_{\rm HMHD}^{\mathfrak{a},\eps}   :=
- u_\eps \cdot   \Div  \Big( \mathcal{C}^{\eps} [u,u ] 
- \mathcal{C}^{\eps} [B,B  ]     \Big) 
-   \frac{1}{2} u_\eps \cdot  \nabla {\mathcal A}^\eps [B, B ]
\\  - B_{\eps} \cdot \Big(  \curl   \mathcal{B}^{\eps} [u,B ] \Big)
+ B_{\eps} \cdot  \Big(  \curl   \Div \mathcal{C}^{\eps} [B,B  ]   \Big)  
\\  =:  J^{\eps}_1 [u,B] + J^{\eps}_2 [u,B] + J^{\eps}_3 [u,B] + J^{\eps}_4 [B] .
\end{gather*}
Using \eqref{formu} and that  $u_{\eps}$ is divergence free, we have 
\begin{eqnarray}
\nonumber
\int_{(0,T) \times \R^3} \chi  J^{\eps}_1 [u,B]   \, dx \, dt   
&=& \int_{(0,T) \times \R^3} \chi   \nabla   u_{\eps} \cdot   
\Big( \mathcal{C}^{\eps} [u,u ] - \mathcal{C}^{\eps} [B,B  ] \Big) \, dx \, dt   
\\ \nonumber && + \int_{(0,T) \times \R^3}
\Big( \mathcal{C}^{\eps} [u,u ] - \mathcal{C}^{\eps} [B,B] \Big)  
(   u_{\eps} \cdot     \nabla \chi )  \, dx \, dt    ,
\end{eqnarray}
\begin{eqnarray}
  \nonumber   \int_{(0,T) \times \R^3} \chi  J^{\eps}_2 [u,B] \, dx \, dt   
&=&   \frac{1}{2}  \int_{(0,T) \times \R^3}  (u_\eps \cdot \nabla \chi )  
{\mathcal A}^\eps [B, B ]  \, dx \, dt    ,
\end{eqnarray}
\begin{eqnarray}
 \nonumber   \int_{(0,T) \times \R^3} \chi  J^{\eps}_3 [u,B]  \, dx \, dt    
&=& - \int_{(0,T) \times \R^3} \  
\chi ( \curl  B_{\eps}) \cdot \mathcal{B}^{\eps} [u,B ]  \, dx \, dt 
\\ \nonumber && - \int_{(0,T) \times \R^3} \  
\Big( B_{\eps} \wedge  \mathcal{B}^{\eps} [u,B ] \Big) \cdot 
\nabla \chi   \, dx \, dt ,
\end{eqnarray}
and
\begin{eqnarray}
  \nonumber  \int_{(0,T) \times \R^3} \chi  J^{\eps}_4 [B] \, dx \, dt   
 &=& \int_{(0,T) \times \R^3} \chi \Div \Big(  (\Div \mathcal{C}^{\eps} [B,B] ) 
\wedge B_{\eps} \Big)  \, dx \, dt
 \\  \nonumber &&+ \int_{(0,T) \times \R^3} \chi  \Big(  (\Div \mathcal{C}^{\eps} [B,B  ]  ) 
 \cdot   \curl  B_{\eps} \Big)  \, dx \, dt ,
 \end{eqnarray}
by using again  \eqref{formu}.

Then, integrating by parts:
\begin{eqnarray}
  \nonumber 
  \int_{(0,T) \times \R^3} \chi  J^{\eps}_4 [B] \, dx \, dt   
  &=& - \int_{(0,T) \times \R^3}   
\Big( (\Div \mathcal{C}^{\eps} [B,B  ]  ) \wedge B_{\eps} \Big) \cdot  
\nabla \chi  \, dx \, dt
% \\ &&  
\\  \nonumber && - \int_{(0,T) \times \R^3} \chi  
 \nabla  ( \curl  B_{\eps})   : \mathcal{C}^{\eps} [B,B  ]   \, dx \, dt
\\  \nonumber && - \int_{(0,T) \times \R^3} \   \Big( \mathcal{C}^{\eps} [B,B  ]  \curl  B_{\eps} \Big)   \cdot   \nabla  \chi ,
\end{eqnarray}
so that
\begin{eqnarray}
 \nonumber &=& \int_{(0,T) \times \R^3}   
 \Big(  (\mathcal{C}^{\eps} [B,B] \nabla) \wedge B_{\eps} \Big) \cdot  
 \nabla \chi  \, dx \, dt
 %\\ && 
\\  \nonumber && + \sum_{j} \int_{(0,T) \times \R^3}   \Big(  ((\mathcal{C}^{\eps} [B,B  ])_{ij})_{i} \wedge B_{\eps} \Big) \cdot \partial_{j} \nabla \chi  \, dx \, dt
\\ &&  \label{formub}
 - \int_{(0,T) \times \R^3} 
\chi  \nabla  ( \curl  B_{\eps})   : \mathcal{C}^{\eps} [B,B  ]   \, dx \, dt
%\\ && 
\\  \nonumber && - \int_{(0,T) \times \R^3} \   \Big( \mathcal{C}^{\eps} [B,B  ]  
\curl  B_{\eps} \Big)   \cdot   \nabla  \chi   \, dx \, dt .
\end{eqnarray}

Finally, we use repetitively Lemma \ref{cruciallemma} 
with the $\phi^j$'s equal to $u$ or $B$, 
and $i=0$ ($u,B \in L^3(0,T;L^3(\R^3))$), 
$i=1$ ($u,B \in \widetilde{L}^3 (0,T;\dot{B}^{1/3}_{3,c_0}(\R^3))_{\rm loc}$), 
or $i=2$ 
($u,B \in \widetilde{L}^3 (0,T;\dot{B}^{2/3}_{3,c_0}(\R^3))_{\rm loc}$): 
the worse term to cope with, in term of needed regularity, 
is $\chi  \nabla  ( \curl  B_{\eps}) : \mathcal{C}^{\eps} [B,B]$, 
from \eqref{formub}. 
It then follows that, as $\eps \rightarrow 0$, 
\begin{equation*}
\text{for } i=1,2,3, \quad 
\int_{(0,T) \times \R^{3}} \chi  J^{\eps}_i [u,B]  \, dx \, dt \rightarrow 0,  
\quad \text{and} \quad 
\int_{(0,T) \times \R^{3}} \chi  J^{\eps}_4 [B]    \, dx \, dt  \rightarrow 0,  \end{equation*}
which provides \eqref{triFin}.

%%%%%%%%%%
%%%%%%%%%%
%%%%%%%%%%
%%%%%%%%%%
%%%%%%%%%%

\section{Vanishing of anomalous crossed fluid-magneto-helicity dissipation for the MHD equations:  Proof of Part iii) of Theorem \ref{MHDonsag}}
\label{pom}

We consider the inner product of equation \eqref{HMHD1Qsplit} with $B_{\eps}$ and the  inner product of equation \eqref{HMHD3Qsplit} with $u_{\eps}$ and we sum the resulting identities, taking into account that
\begin{gather*}
 B_{\eps} \cdot \Div (u_{\eps} \otimes u_{\eps} - B_{\eps} \otimes     B_{\eps}) 
 - u_{\eps} \cdot \curl  (u_{\eps} \wedge  B_{\eps}) 
 = \Div \Big( (u_{\eps} \cdot B_{\eps} ) u_{\eps} - \frac12 (| u_{\eps} |^2 + | B_{\eps} |^2 )  B_{\eps} \Big) ,
 \\  B_{\eps} \cdot \nabla (p_{\eps} + \frac12  | B_{\eps} |^2 )  = \Div \big( (p_{\eps} + \frac12  | B_{\eps} |^2 ) B_{\eps}  \big) ,
 \\  - B_{\eps} \cdot  \Delta u_{\eps}  -  u_{\eps} \cdot   \Delta B_{\eps} = 2 (\curl  u_{\eps} ) \cdot (\curl  B_{\eps} )  + \Div \big(  (\curl  u_{\eps} ) \wedge B_{\eps} \big) + \Div \big(  (\curl  B_{\eps} ) \wedge u_{\eps} \big) .
\end{gather*}
We obtain that 
\begin{gather*}
\dt h^{\eps}_{fm} 
+ d^{\eps}_{fm}  
 + \Div   f^{\eps}_{fm} 
  =  d^{\mathfrak{a},\eps}_{fm} , 
\end{gather*}
where
\begin{gather*}
h^{\eps}_{fm} := u_{\eps} \cdot B_{\eps} ,
\quad d^{\eps}_{fm} :=     2 \omega_{\eps} \cdot \curl B_{\eps}                        ,
\\ f^{\eps}_{fm} :=  (u_{\eps}\cdot B_{\eps})u_{\eps} + (p_{\eps} -\frac12 |u_{\eps} |^{2} )B_{\eps} + (\curl u_{\eps} ) \wedge B_{\eps} +  (\curl B_{\eps} ) \wedge u_{\eps}  ,
\\ d^{\mathfrak{a},\eps}_{fm}  := 
- B_{\eps} \cdot \Div \Big(  \mathcal{C}^{\eps} [u,u  ] -  \mathcal{C}^{\eps} [B,B  ]\Big) 
- \frac12 B_{\eps} \cdot  \nabla  \mathcal{A}^{\eps} [B,B ]    
+ u^{\eps } \cdot \curl   \mathcal{B}^{\eps} [u,B ]  .  
\end{gather*}
One easily sees that 
$h^{\eps}_{fm} $, 
$d^{\eps}_{fm}$, 
and $f^{\eps}_{fm}$ converge respectively in the sense of distributions to 
$h_{fm}$, 
$d_{fm}  $ and 
 $ f_{fm}$.

Finally, we use repetitively Lemma \ref{cruciallemma} to conclude that the anomalous dissipation $d^{\mathfrak{a},\eps}_{fm}$ vanishes when $\eps \rightarrow 0$.

%%%%%%%%%%%%%%%%%%%%
%%%%%%%%%%%%%%%%%%%%SuitableMLL
%%%%%%%%%%%%%%%%%%%%

\section{Suitable solutions: Proof of Theorem  \ref{THsuitableMLL} 
and of Theorem \ref{THsuitableHMHD}}
\label{SectionSuitable}

\subsection{Proof of Theorem  \ref{THsuitableMLL}}

We recall from \cite{AS,CF98} that the weak maximum principle yields that for any $\eps \in (0,1)$,  $ m^{\eps}$ is bounded by $1$ almost everywhere in space and time. 
Then, using equation \eqref{LLM-suit1}  and the estimate \eqref{MLLenergyGL} we deduce that $m^{\eps}$ belongs to
the space $L^{2} ((0,T) ; H^{2} (\R^3 ; \R^3))   $ (observe however that this provides an estimate of the norm of $m^{\eps}$  in this space which is not uniform in $\eps$).  

Now, observe that, formally,  multiplying  \eqref{LLM-suit1} by $\dt m^{\eps}$, \eqref{LLM-suit2} by $H^{\eps}$, \eqref{LLM-suit3} by $E^{\eps}$, 
and summing the resulting identities lead to 
\begin{equation}
\label{LLlocalEpsG}
\dt \big( e^{\eps}_{\rm MLL} +   e^{\eps}_{\rm GL}       \big) + 
d^{\eps}_{\rm MLL}  + 
\Div f^{\eps}_{\rm MLL}   = - 2 (m^{\eps} \cdot H^{\eps} ) (m^{\eps} \cdot \partial_{t} m^{\eps} ) ,
\end{equation}
where $(e^{\eps}_{\rm MLL} , d^{\eps}_{\rm MLL}  ,  f^{\eps}_{\rm MLL}  )$ is given by \eqref{edfMLLeps} and  $e^{\eps}_{\rm GL}$ by  \eqref{88}.

In fact,  Identity  \eqref{LLlocalEpsG} can be easily justified. On one hand  the smoothness of $(m^{\eps} , H^{\eps} , E^{\eps})$ is sufficient to manipulate all the terms coming from the multiplication of  \eqref{LLM-suit1} by $\dt m^{\eps}$. On the other hand, one can mollify the linear equations \eqref{LLM-suit2}  and \eqref{LLM-suit3}  and then multiply them respectively by 
$H^{\eps}$ and $E^{\eps}$. It then remains to sum the resulting identities and to pass to the limit with respect to the regularization parameter. 
Thus, the local energy identity \eqref{LLlocalEpsG} holds true without any anomalous dissipation.

Now, using the uniform bounds of  $(m^{\eps} , H^{\eps} , E^{\eps})$  provided by \eqref{MLLenergyGL}, we have, 
up to a subsequence: 
$ f^{\eps}_{\rm MLL}  $  converges in the sense of distributions to 
 $f_{\rm MLL}  $;
$d^{\eps}_{\rm MLL}  $ and $e^{\eps}_{\rm MLL} +  e^{\eps}_{\rm GL}$ 
 converge in the sense of distributions respectively to 
 some distributions  $\tilde{d}_{\rm MLL}^{\mathfrak{a},1}$ and $\tilde{e}_{\rm MLL}^{\mathfrak{a}}$ 
 such that, by lower weak semicontinuity,
  $ \tilde{d}_{\rm MLL}^{\mathfrak{a},1} - |\dt m |^{2} $ and
  $\tilde{e}_{\rm MLL}^{\mathfrak{a}} - e_{\rm MLL}$ 
  are non negative. 
Finally the term $(m^{\eps} \cdot H^{\eps} ) (m^{\eps} \cdot \partial_{t} m^{\eps} )$ converges in the sense of distributions to $0$.

Therefore it follows from \eqref{LLlocal} that 
the  anomalous energy dissipation  $d^{\mathfrak{a}}_{\rm MLL}$ is given by 
$d^\mathfrak{a}_{\rm MLL} = - d_{\rm MLL}^{\mathfrak{a},1}  - \partial_t e_{\rm MLL}^{\mathfrak{a}}$, 
where $d_{\rm MLL}^{\mathfrak{a},1}  := \tilde{d}_{\rm MLL}^{\mathfrak{a},1}  - d_{\rm MLL} $
and
$e_{\rm MLL}^{\mathfrak{a}} := \tilde{e}_{\rm MLL}^{\mathfrak{a}} - e_{\rm MLL}$
 are non negative.
%

%%%%%%%%%%%%%%%%%%%%
%%%%%%%%%%%%%%%%%%%%
%%%%%%%%%%%%%%%%%%%%

\subsection{Proof of Theorem \ref{THsuitableHMHD}}

Multiplying \eqref{HMHD-suit1} by $u^{\eps}$, \eqref{HMHD-suit3} by $B^{\eps}$, and summing the resulting identities lead to 
\begin{equation}
\dt e^{\eps}_{\rm HMHD} + 
d^{\eps}_{\rm HMHD}  + 
\Div \tilde{f}^{\eps}_{\rm HMHD}  = 0  ,
\end{equation}
where $e^{\eps}_{\rm HMHD}$ and $ 
d^{\eps}_{\rm HMHD}$ are given by the formula in \eqref{defcucu}
and
\begin{gather*}
\tilde{f}^{\eps}_{\rm HMHD}  := 
  \frac12 |  u^{\eps} |^2 ( u^{\eps})_{\eps} + p^{\eps} u^{\eps}
 + B^{\eps} \wedge  ( u^{\eps} \wedge (B^{\eps})_{\eps} )
 + ( \curl u^{\eps} )  \wedge (u^{\eps})_{\eps}
 \\ +  ( \curl B^{\eps} )  \wedge (B^{\eps})_{\eps}
+  \big(  ( \curl B^{\eps} )  \wedge (B^{\eps})_{\eps} \big) \wedge B^{\eps} .
\end{gather*}
We do not detail the computations here since it is sufficient to adapt what we have already done in 
Section \ref{LoEnHMHD} in order to obtain \eqref{cucu} with here an extra bookkeeping of the   mollifications.

Now, using that, up to a subsequence,    $u^{\eps}$ and $ B^{\eps}$ converges in $L^{3} ((0,T) \times \R^{3} )_{\rm loc}$ respectively to $u$ and $B$, and therefore $(u^{\eps})_{\eps}$ and $ (B^{\eps})_{\eps}$ converges as well in $L^{3} ((0,T) \times \R^{3} )_{\rm loc}$ respectively to $u$ and $B$, and that   $ \curl u^{\eps}$ and $ \curl B^{\eps}$ weakly converge to $ \curl u$ and $ \curl B$ in $L^{2} ((0,T) \times \R^{3} )$, 
we get that 
\begin{gather*}
\frac{1}{2} \dt \Big( |u^{\eps} |^{2} +   |B^{\eps} |^{2}   \Big) +
 \Div  \Big(
 ( \frac12 |  u^{\eps} |^2 ( u^{\eps})_{\eps} + B^{\eps} \wedge  ( u^{\eps} \wedge (B^{\eps})_{\eps} )
 + ( \curl u^{\eps} )  \wedge (u^{\eps})_{\eps}
 +  ( \curl B^{\eps} )  \wedge (B^{\eps})_{\eps}  \Big)
\end{gather*}
converges in the sense of distributions to
\begin{gather*}
\frac{1}{2} \dt \Big( |u |^{2} +   |B |^{2}   \Big) +
   \Div  \Big(
  \frac12 |  u |^2  u
 + B \wedge  ( u \wedge B )
 + ( \curl u )  \wedge u
 +  ( \curl B )  \wedge B
\Big) .
\end{gather*}

Moreover, using once again \eqref{ellpm}, we get that $ p_{\eps} $ converges in  $L^\frac32 ((0,T) \times \R^3 )_{\rm loc}$
to $p$. Thus 
$ \Div  ( p^{\eps} u^{\eps} )$ converges in the sense of distributions to 
 $ \Div ( p u)$.

Finally, using  that, up to a subsequence,   $ B^{\eps}  $ converges to $B$ in $L^{4} ((0,T) \times \R^{3} )_{\rm loc}$ we obtain that 
\begin{equation}
\label{hallli}
\Div  \big(  ( \curl B^{\eps} )  \wedge (B^{\eps})_{\eps} \big) \wedge B^{\eps}
\big)
\end{equation}
converges in the sense of distributions to
$\Div  \big(  ( \curl B )  \wedge B \big) \wedge B\big)$.

We therefore get that 
 the anomalous energy dissipation $d_{\rm HMHD}^\mathfrak{a} $ is given by the discrepancy between the limit of $   |\curl u^{\eps}  |^2 + |\curl B^{\eps}  |^2$ and
    $|\curl u  |^2 + |\curl B  |^2$ which is 
 non positive by lower weak semicontinuity.

Since the term 
\eqref{hallli}
vanishes when the Hall effect is omitted one sees that the assumption  that, up to a subsequence,   $ B^{\eps}  $ converges to $B$ in $L^{4} ((0,T) \times \R^{3} )_{\rm loc}$, is not needed in order to prove the result for the MHD equations.

%%%%%%%%%%
%%%%%%%%%%
%%%%%%%%%%
%%%%%%%%%%
%%%%%%%%%%

\appendix

\bigskip
\noindent
{\bf \Large Appendix: proof of Lemma \ref{bern}}

\medskip
\noindent

Here, we prove that for all $p,r\in[1,\infty]$, $\alpha\in(0,1)$ 
and $\tilde\alpha\in(0,\alpha)$, with $\tilde{p}$ defined by 
$\tilde{\alpha} - 3/\tilde{p} = \alpha -3/p$, the space 
$\widetilde{L}^r (0,T; \dot{B}^\alpha_{p,\infty} (\R^3))$ 
is continuously embedded in 
$\widetilde{L}^r (0,T; \dot{B}^{\tilde{\alpha}}_{\tilde{p},\infty} (\R^3))$. 

The proof of the other case, when $\alpha\in(1,2)$ 
and $\tilde\alpha\in(0,1)\cup (1,\alpha)$, follows, since, when $\alpha\in(1,2)$,
$$\| u  \|_{\widetilde{L}^r (0,T; \dot{B}^\alpha_{p,\infty} (\R^3))} = 
\| u  \|_{\widetilde{L}^r (0,T; \dot{B}^{\alpha -1}_{p,\infty} (\R^3))}
+ \sum_{i} \| \partial_{i} u  \|_{\widetilde{L}^r (0,T; \dot{B}^{\alpha -1}_{p,\infty} (\R^3))} 
.$$

As usual, $A \lesssim B$ denotes the inequality 
$A \leq CB$ for some universal constant $C$. 
We recall the existence of a smooth dyadic partition of unity:  
there exists a smooth radial function $\varphi$, 
supported in the annulus 
$ C(3/4,8/3) := \{ 3/4 < | \xi | < 8/3 \}$, 
with values in the interval $\lbrack 0,1 \rbrack$,
such that 
$$
\forall \xi \in \R^3 \setminus \{0\} ,\quad   
\sum_{ j \in \Z} \varphi  (2^{-j} \xi ) = 1 ; \qquad 
| j-j' |  \geqslant 2 \Rightarrow 
\text{ supp }  \varphi  (2^{-j}  \cdot  ) \cap   
\text{ supp } \varphi  (2^{-j'} \cdot) =  \emptyset.
$$
The so-called dyadic blocks $\dot\Delta_j $ correspond to 
the Fourier multipliers 
$ \dot\Delta_j :=  \varphi (2^{-j} D) $, that is
\begin{eqnarray*}
\dot\Delta_j u (x):=  
2^{3j} \int_{\R^3}  h (2^{j}y) u(x-y) dy  
\quad \text{ for }  j\in\Z , \quad \text{ where }   
h :=  \mathcal{F}^{-1}  \varphi .
\end{eqnarray*}
Then, for all $u\in\mathcal{S}'_h$ (from Definition \ref{oula}), 
homogeneous  Littlewood-Paley decomposition holds: 
$u = \sum_{j\in\Z} \dot\Delta_j u$. 
    
Let $u \in \widetilde{L}^r (0,T; \dot{B}^\alpha_{p,\infty} (\R^3))$ 
and $y\in\R^3\setminus\{0\}$.  
Denoting $\delta_y u(t,x) = u(t,x-y) - u(t,x)$, 
we write 
$$
\delta_y u = \sum_{j\in\Z} \dot\Delta_j \delta_y u . 
$$
Thus, we estimate 
$$
\| \| \delta_y u \|_{L^{\tilde{p}}(\R^3)} \|_{L^r(0,T)} 
\leq \sum_{j\in\Z} 
\| \| \delta_y \dot\Delta_j u \|_{L^{\tilde{p}}(\R^3)} \|_{L^r(0,T)} , 
$$
which splits up, for any $j_y\in\Z$, as the sum $I+II$, 
where 
$$
I = \sum_{j\leq j_y} 
\| \| \delta_y \dot\Delta_j u \|_{L^{\tilde{p}}(\R^3)} \|_{L^r(0,T)} 
\quad \text{and} \quad 
II = \sum_{j>j_y} 
\| \| \delta_y \dot\Delta_j u \|_{L^{\tilde{p}}(\R^3)} \|_{L^r(0,T)}. 
$$

We recall that Bernstein's lemma implies
\begin{equation} \label{bernineq}
\| \dot\Delta_j u \|_{L^{\tilde{p}}(\R^3)} 
\lesssim 2^{3j\left(\frac{1}{p}-\frac{1}{\tilde{p}}\right)} 
\| \dot\Delta_j u \|_{L^p(\R^3)} 
= 2^{3j(\alpha-\tilde\alpha)} \| \dot\Delta_j u \|_{L^p(\R^3)} .
\end{equation}
We also notice that 
\begin{equation} \label{triangineq}
\| \delta_y \dot\Delta_j u \|_{L^{\tilde{p}}(\R^3)} 
\leq 2 \| \Delta_j u \|_{L^{\tilde{p}}(\R^3)},
\end{equation}
as well as 
\begin{equation} \label{taylorineq}
\| \delta_y \dot\Delta_j u \|_{L^{\tilde{p}}(\R^3)} 
\lesssim 2^j |y| \sum_{|j-j'|\leq1} 
\| \Delta_{j'} u \|_{L^{\tilde{p}}(\R^3)}
\end{equation}
(see \cite{bcd}, page 75), and 
\begin{equation} \label{freqineq}
2^{j\alpha} 
\| \| \delta_y \dot\Delta_j u \|_{L^{\tilde{p}}(\R^3)} \|_{L^r(0,T)} 
\lesssim \| u \|_{\widetilde{L}^r (0,T; \dot{B}^\alpha_{p,\infty} (\R^3))} 
\end{equation}
(from \cite{bcd}, page 76).

Then, for any term in the sum $I$, we have 
\begin{eqnarray}
\notag
\| \| \delta_y \dot\Delta_j u \|_{L^{\tilde{p}}(\R^3)} \|_{L^r(0,T)}
& \lesssim & 
2^j |y| \sum_{|j-j'|\leq1} 
\| \| \Delta_{j'} u \|_{L^{\tilde{p}}(\R^3)} \|_{L^r(0,T)}
\quad \text{by \eqref{taylorineq}}, \\
\notag
& \lesssim & 
2^j |y| 2^{3j(\alpha-\tilde\alpha)} 
\sum_{|j-j'|\leq1} \| \| \Delta_{j'} u \|_{L^p(\R^3)} \|_{L^r(0,T)}
\quad \text{by \eqref{bernineq}}, \\ 
& \lesssim & |y| 2^{j(1-\tilde\alpha)} 
\| u \|_{\widetilde{L}^r (0,T; \dot{B}^\alpha_{p,\infty}(\R^3))} 
\quad \text{by \eqref{freqineq}}.
\label{ineqI}
\end{eqnarray}

Now, for the terms in the sum $II$, inequalities 
\eqref{triangineq}, \eqref{bernineq} and \eqref{freqineq} 
lead to 
\begin{equation} \label{ineqII}
\| \| \delta_y \dot\Delta_j u \|_{L^{\tilde{p}}(\R^3)} \|_{L^r(0,T)} 
\lesssim 2^{-j\tilde\alpha} 
\| u \|_{\widetilde{L}^r (0,T; \dot{B}^\alpha_{p,\infty}(\R^3))} . 
\end{equation}

Finally, choosing $j_y\in\Z$ such that 
$\displaystyle \frac{1}{|y|} \leq 2^{j_y} \frac{2}{|y|}$, 
we get 
$\displaystyle 
|y| \sum_{j\leq j_y} 2^{j(1-\tilde\alpha)} 
+ \sum_{j> j_y} 2^{-j\tilde\alpha} \lesssim |y|^{\tilde\alpha}$, 
and hence the desired inequality, 
$$
\| u \|_{\widetilde{L}^r (0,T; 
\dot{B}^{\tilde\alpha}_{\tilde{p},\infty}(\R^3))}
\lesssim
\| u \|_{\widetilde{L}^r (0,T; \dot{B}^\alpha_{p,\infty}(\R^3))} .
$$

\end{document}